\begin{document}
\title{Pre-modular  fusion categories of global dimensions $p^2$}
\author{Zhiqiang Yu}
\date{}
\maketitle

\newtheorem{theo}{Theorem}[section]
\newtheorem{prop}[theo]{Proposition}
\newtheorem{lemm}[theo]{Lemma}
\newtheorem{coro}[theo]{Corollary}
\theoremstyle{definition}
\newtheorem{defi}[theo]{Definition}
\newtheorem{exam}[theo]{Example}
\newtheorem{rema}[theo]{Remark}
\newtheorem{ques}[theo]{Question}

\newcommand{\A}{\mathcal{A}}
\newcommand{\ad}{\text{ad}}
\newcommand{\B}{\mathcal{B}}
\newcommand{\C}{\mathcal{C}}
\newcommand{\D}{\mathcal{D}}
\newcommand{\E}{\mathcal{E}}
\newcommand{\I}{\mathcal{I}}
\newcommand{\FC}{\mathbb{C}}
\newcommand{\FQ}{\mathbb{Q}}
\newcommand{\FPdim}{\text{FPdim}}
\newcommand{\Gal}{\text{Gal}}
\newcommand{\Gr}{\text{Gr}}
\newcommand{\ord}{\text{ord}}
\newcommand{\pt}{\text{pt}}
\newcommand{\Q}{\mathcal{O}}
\newcommand{\rank}{\text{rank}}
\newcommand{\hxs}{\mathfrak{s}}
\newcommand{\ssl}{\mathfrak{sl}}
\newcommand{\sso}{\mathfrak{so}}
\newcommand{\SL}{\text{SL}}
\newcommand{\hxt}{\mathfrak{t}}
\newcommand{\W}{\mathcal{W}}
\newcommand{\vvec}{\text{Vec}}
\newcommand{\Y}{\mathcal{Z}}
\newcommand{\YL}{\text{YL}}
\newcommand{\Z}{\mathbb{Z}}
\theoremstyle{plain}

\abstract
Let $p\geq5$ be a prime, we show that a non-pointed modular fusion category $\C$ is Grothendieck equivalent to $\C(\ssl_2,2(p-1))_A^0$ if and only if $\dim(\C)=p\cdot u$, where $u$ is a certain totally positive algebraic unit and $A$ is the regular algebra of the Tannakian subcategory $\text{Rep}(\Z_2)\subseteq\C(\ssl_2,2(p-1))$. As a direct corollary, we classify non-simple modular fusion categories of   global dimensions $p^2$.

\bigskip
\noindent {\bf Keywords:} Global dimension; pre-modular fusion category; modular fusion category

Mathematics Subject Classification 2020: 18M20

\section{Introduction}\label{introduction}


A fusion category $\C$ over the complex field $\FC$ is a semisimple finite tensor category, fusion categories are widely studied with  restrictions on their Frobenius-Perron dimensions \cite{EGNO,ENO1}, ranks (i.e., the number of the isomorphism classes of simple objects) \cite{BNRW2,O2} etc.  The global dimension of a fusion category is an important concept that deserved more attentions.
Unlike the classification of fusion categories by Frobenius-Perron dimensions,  however, we even don't know the structures of   fusion categories  with a prime global dimension.

Recently,  some progresses are made in topics related to the classifications of spherical fusion categories by their global dimensions. In \cite{O3}, V. Ostrik  gave lower bounds of the dimensions (more generally, the formal codegrees) of fusion categories, as a direct result, he classified spherical fusion categories of integer global dimensions less than $6$. Braided spherical (or, pre-modular) fusion categories of global dimension less than or equal to 10 were completely classified by the author in \cite{Yu}, and  spherical fusion categories of dimension $6$ were also shown to be weakly integral. It was   conjectured in \cite{Yu}   that  pre-modular fusion categories of prime dimension $p$ are always pointed if $p\neq5$, this is answered affirmatively in \cite{Schopieray} lately. Moreover, by using a classical Siegel's trace theorem about totally positive algebraic integers \cite{Siegel}, spherical fusion categories of prime dimension $p$  are proved to be pointed if $(p-1)/2$  is also an odd prime.

As a special class of fusion categories,   modular fusion categories connect deeply with conformal field theory \cite{BK}, vertex operator algebras \cite{DongLNg}, quantum groups at root of unity \cite{BK,EGNO}. The $S$-matrix and $T$-matrix of modular fusion categories (see section \ref{preliminaries}), which  reflect many  important properties of modular fusion categories, also enjoy interesting algebraic and arithmetic properties \cite{BNRW1,BNRW2,DongLNg}. Therefore, modular fusion categories are  inseparable with algebraic number theory and representations of the modular group $\SL(2,\Z)$ \cite{BNRW1,BNRW2,EGNO},  in particular,  one can  peer into their properties   by considering  the number of Galois orbits of the simple objects \cite{NgWaZh,PSYZ} and the representation type of $\SL(2,\Z)$ associated to a modular category \cite{NgRWW}, for example.

Let $p$ be a prime, $\C$ a  modular fusion category of global dimension $p^2$.
Then $\C$ is tensor equivalent to an Ising category $\C(\ssl_2,2)$ or $\C$ is pointed \cite{O3} if $p=2$. Modular fusion categories of global dimension $9$ are either pointed or braided tensor equivalent to a Galois conjugate of the non-pointed simple modular fusion category
$\C(\sso_5,\frac{3}{2})_\text{ad}$ \cite[Theorem 4.8]{Yu}. Let $\C(\ssl_2,3)_\text{ad}$ denote the  Yang-Lee (or, Fibonacci) fusion category, which is a rank $2$ transitive modular fusion category \cite{NgWaZh}, and its  Galois conjugate $\C(\ssl_2,3)_\text{ad}^\sigma$ has global dimension $\frac{5-\sqrt{5}}{2}$, where $\sigma\in\text{Gal}(\FQ(\zeta_5)/\FQ)$ such that $\sigma(\zeta_5)=\zeta_5^2$. So  modular fusion categories
\begin{align*}
\C(\ssl_2,3)_\text{ad}\boxtimes\C(\ssl_2,3)_\text{ad}^\sigma\boxtimes \C(\ssl_2,3)_\text{ad}\boxtimes\C(\ssl_2,3)_\text{ad}^\sigma,
~\C(\ssl_2,3)_\text{ad}\boxtimes\C(\ssl_2,3)_\text{ad}^\sigma\boxtimes\C(\Z_5,\eta)
 \end{align*}
both have  global dimension $25$, where $\C(\Z_5,\eta)$ is a pointed modular fusion category of dimension $5$. When $p=7$, modular fusion category
$\C(\ssl_2,5)_\text{ad}\boxtimes\C(\ssl_2,5)_\text{ad}^\tau\boxtimes\C(\ssl_2,5)_\text{ad}^{\tau^2}$
and its Galois conjugates have global dimension $49$, where $\tau\in\Gal(\FQ(\zeta_7))$ such that $\tau(\zeta_7)=\zeta_7^2$, $\C(\ssl_2,5)_\text{ad}$ is a  transitive modular fusion categories of rank $3$ \cite{O2,NgWaZh}. It was asked in  \cite[Question 4.9]{Yu}  whether the modular fusion categories (and their Galois conjugates) mentioned above are all non-pointed  modular fusion categories of global dimension  $p^2$.

In this paper, from both the views of algebraic and arithmetic properties of modular fusion categories, we continue to investigate  the structure of (pre-)modular fusion categories of global dimensions $p^2$ with $p\geq5$. Let $\C$ be such a modular fusion category, if $\C$ does not contains a non-trivial fusion subcategory of integer dimension, we show that $\C$ always contains a Galois conjugate of the transitive modular fusion category $\C(\ssl_2,p-2)_\ad$ (Theorem \ref{nintdimfsb}), then  we give a complete classification of non-simple modular fusion category of global dimension $p^2$ (Theorem \ref{nonsimplep^2}). Moreover, we find that there exists another non-pointed modular fusion category of global dimension $11^2$, which then gives a negative answer to \cite[Question 4.9]{Yu}.



\begin{table}[htb]
\begin{center}
\caption{Some Notations}
\begin{tabular}{|c|c|}
\hline   \textbf{Notation} & \textbf{Meaning}  \\
\hline   $\zeta_n $& the $n$-th primitive root of unity $e^\frac{2\pi i}{n} $  \\
\hline   $N(f)$ & the norm of $f$, i.e., $N(f)=\Pi_{\sigma\in\Gal(\FQ(f)/\FQ)}\sigma(f)$  \\
\hline  $\Q_X(\C)$ & the Galois orbit of simple object $X$ of a modular fusion category $\C$\\
\hline  $\C(\mathfrak{g},k)$ & the modular fusion category obtained from representation category\\

& $\text{Rep}(U_q(\mathfrak{g}))$ of quantum group $U_q(\mathfrak{g})$ at root of unity\\
\hline
\end{tabular}
\end{center}
\end{table}
The paper is organized as follows. In section \ref{preliminaries}, we recall some basic notions and notations of  (modular)  fusion categories, such as  global dimensions, formal codegrees and $d$-numbers,  modular data, and the congruence representations of the modular group $\SL(2,\Z)$. In section \ref{section3}, we consider modular fusion categories whose norm of global dimensions are powers of a prime $p$, in particular, if $p\geq5$, we show that a non-pointed modular fusion category $\C$ is Grothendieck equivalent to $\C(\ssl_2,2(p-1))_A^0$ if and only if $\dim(\C)=p\cdot u$ in Theorem \ref{rank(p+3)/2} and  Corollary \ref{ptimesunit}, where $u$ is a certain algebraic unit. In section \ref{section4}, we first show that a non-simple modular fusion category $\C$ of global dimension $p^2$ contains a Galois conjugate of a transitive modular fusion subcategory  if $\C$ does not contain a non-trivial fusion subcategory with an integer global dimension in Theorem \ref{nintdimfsb}, then we give a complete classification of non-simple modular fusion categories of dimension $p^2$ in Theorem \ref{nonsimplep^2}.
\section{Preliminaries}\label{preliminaries}
In this section, we recall we will recall some important definitions and properties about  fusion categories and modular fusion categories, we refer the readers
  to \cite{BK,BNRW2,DrGNO2,EGNO,ENO1,Mu1,O1}.



  \subsection{Fusion categories and dimensions}
Let $\C$ be a fusion category,   $\Q(\C)$  the set of isomorphism classes of simple objects of a fusion category $\C$. Then the Frobenius-Perron homomorphism $\FPdim(-):\Gr(\C)\to\FC$ is the unique ring homomorphism such that $\FPdim(X)\geq1$ is an algebraic integer, $\forall X\in\Q(\C)$, $\FPdim(X)$ is called the Frobenius-Perron dimension of the object $X$, and the sum
\begin{align*}
\FPdim(\C):=\sum_{X\in\Q(\C)}\FPdim(X)^2
\end{align*}
is defined as the Frobenius-Perron dimension of $\C$.

A fusion category $\C$ is weakly integral if $\FPdim(\C)\in\Z$, $\C$ is integral if $\FPdim(X)\in\Z$ for all $X\in\Q(\C)$. We use $\C_\text{int}$ to denote the maximal integral fusion subcategory of $\C$. The adjoint subcategory $\C_\ad$ of a weakly integral fusion category is always integral \cite{EGNO}.
A simple object of $\C$ is called invertible if $X\otimes X^*=I$, the unit object, equivalently, $\FPdim(X)=1$. A fusion category $\C$ is pointed if $\Q(\C)$ is a finite group, where the group multiplication is induced by the tensor product $\otimes$. In the following, we use $\C_\pt$ to denote the maximal pointed subcategory of $\C$, that is, the fusion subcategory generated by invertible simple objects of $\C$. And we say a  fusion category $\C$ is non-pointed if $\C_\pt\neq \C$. In addition,  two fusion categories $\C$ and $\D$ are  Grothendieck equivalent if  $\Gr(\C)\cong\Gr(\D)$ as fusion rings.

Let $\C$ be a spherical  fusion category $\C$ with spherical structure $j$, which is a natural isomorphism $j=\{j_X|j_X:X\overset{\sim}{\to} X^{**},X\in\C\}$ such that $\dim_j(X)=\dim_j(X^*)$, $\dim_j(X)$ is called  the quantum (or, categorical) dimension of $X$  determined by $j$,  where $\dim_j(X)$ is defined as the (categorical) trace of $\text{id}_X$, that is,
\begin{align*}
\dim_j(X)=\text{Tr}(\text{id}_X):=\text{ev}_X\circ(j_X\circ \text{id}_X)\otimes \text{id}_{X^*}\circ\text{coev}_X,
\end{align*}
where $(X^*,\text{ev}_X,\text{coev}_X)$ is the dual object of $X$ and we suppress the associativity and   unit constraints  of $\C$. We define the global (or, categorical) dimension of the  fusion category $\C$ as
\begin{align*}
\dim(\C):=\sum_{X\in\Q(\C)}\dim_j(X)^2,
\end{align*}
the global dimension $\dim(\C)$ is independent of the choice of the spherical structure of $\C$ and $\dim_j(-)$ induces a homomorphism from $\Gr(\C)$ to $\FC$
\cite[Proposition 4.7.12]{EGNO}.

Given an arbitrary spherical fusion category $\C$,  we can consider the twist (or Galois conjugate) $\C^\sigma$ of $\C$, where $\sigma\in\text{Gal}(\overline{\mathbb{Q}}/\mathbb{Q})$ and $\overline{\FQ}$ is the algebraic closure of $\FQ$. More precisely, $\C^\sigma$ is a fusion category with the same monoidal functor $\otimes$ as $\C$ and the associator of $\C^\sigma$ is obtained by composing  the one of $\C$ with automorphism $\sigma$. Moreover, $\text{dim}(\C^\sigma)=\sigma(\text{dim}(\C))$.
A fusion category $\C$ is said to be pseudo-unitary if $\text{dim}(\C)=\text{FPdim}(\C)$.  For more properties of global dimension, we refer the readers to references \cite{EGNO,ENO1,O3}. In this paper, we will fix a spherical structure $j$ and denote $\dim_j(-)$ by $\dim(-)$.

\subsection{Formal codegrees of fusion categories}
Let $\C$ be a fusion category, then the complexified ring $\Gr(\C)\otimes_\Z\FC$ is semisimple. Given an irreducible representation $\chi$ of $\Gr(\C)\otimes_\Z\FC$,  the element
\begin{align*}
\alpha_\chi=\sum_{X\in\Q(\C)}\text{tr}_\chi(X)X^*
\end{align*}
is central, where $\text{tr}_\chi$ is the ordinary trace function on the representation $\chi$, moreover $\chi'(\alpha_\chi)=0$ if $\chi\ncong\chi'$ and $f_\chi:=\chi(\alpha_\chi)$ is a positive algebraic integer \cite{Lusztig}, $f_\chi$ is called a formal codegree of fusion category $\C$ \cite{O1}. For example, $\FPdim(\C)$ and $\dim(\C)$ are formal codegrees of $\C$.

It was showed that $\frac{\dim(\C)}{f_\chi}$ is always a totally positive algebraic integer
\cite[Corollary 2.14]{O1} and the set of formal codegrees of $\C$ satisfy the following equation \cite[Proposition 2.10]{O1}
\begin{align}\label{classeqt}
\sum_{\chi}\frac{\chi(1)}{f_\chi}=1.
\end{align}
If $\C\ncong\vvec$, then $f_\chi>\sqrt{\frac{2\rank(\C)}{\rank(\C)+1}}\geq\sqrt{\frac{4}{3}}$ for all irreducible representations $\chi$ \cite[Theorem 4.2.1]{O3}. Moreover, $\sigma(\dim(\C))>\frac{4\sqrt{3}}{5}$ if the non-trivial fusion category $\C$ is not a Galois conjugate of $\C(\ssl_2,3)_\ad$ \cite[Proposition A.1.1]{O3}, where $\sigma\in\Gal(\overline{\FQ}/\FQ)$.

Let $\alpha$ be an algebraic integer with the minimal polynomial $g(x)=x^n+a_1x^{n-1}+\cdots+a_{n-1}x+a_n$, then $\alpha$ is called a $d$-number if $(a_n)^j$ divides $(a_j)^n$ for all $1\leq j\leq n$ \cite[Definition 1.1]{O1}, see \cite[Lemma 2.7]{O1} for more equivalent conditions about $d$-numbers. The formal codegrees of a fusion category are $d$-numbers \cite[Theorem 1.2]{O1}, for example.  In addition, the Frobenius-Perron  dimensions and quantum dimensions of  objects, and formal codegrees  of a fusion category are cyclotomic algebraic integers \cite[Corollary 8.53]{ENO1}. Hence, in order to determine whether  a totally positive algebraic integer $\alpha$ is a formal codegree of a fusion category $\C$, we can use the Program GAP to test whether $\alpha$ is a $d$-number and  the Galois group of the minimal polynomial of $\alpha$ is abelian, this is called the $d$-number tests and cyclotomic test \cite{O1}.


\subsection{Modular fusion categories and  representations of $\SL(2,\Z)$}
Let $\C$ be a braided fusion category with braiding $c$ and $\D$  a fusion subcategory of $\C$. Then the centralizer of $\D$ in $\C$ is the following fusion subcategory
\begin{align*}
\D_\C'={\{X\in\C|c_{Y,X} c_{X,Y}=\text{id}_{X\otimes Y},\forall Y\in\D}\}.
\end{align*}
 We call $\C':=\C_\C'$ the M\"{u}ger center of $\C$ \cite{Mu1}.

Let $\C$ be a braided spherical (i.e., pre-modular) fusion category with ribbon structure $\theta$, the matrices $S=(S_{X,Y})$ and $T=(\delta_{X,Y}\theta_X)$ are called the $S$-matrix and $T$-matrix of $\C$, respectively, where $S_{X,Y}=\text{Tr}(c_{Y,X}c_{X,Y})$, $X,Y\in\Q(\C)$.
A modular fusion category is a pre-modular fusion category such that the $S$-matrix $S$ is non-degenerate, equivalently $\C'=\vvec$ \cite{DrGNO2,EGNO,Mu1}. Moreover, the  $S$-matrix a modular fusion category $\C$ determines the multiplication of the Grothendieck ring $\Gr(\C)$ by the famous Verlinde formula \cite{EGNO}, i.e., for $X,Y,Z\in\Q(\C)$,
\begin{align}\label{Verlinde}
N_{X,Y}^Z:=\dim_\FC(\text{Hom}(X\otimes Y,Z))=\sum_{W\in\Q(\C)}\frac{S_{X,W}S_{Y,W}S_{Z^*,W}}{\dim(W)}.
\end{align}

Moreover, given a modular fusion category $\C$,  the Verlinde formula (\ref{Verlinde}) also implies that the set of ring homomorphism from $\Gr(\C)$ to $\FC$ is in bijective correspondence with $\Q(\C)$ \cite{EGNO}. Explicitly, let $X\in\Q(\C)$, then $h_X(Y):=\frac{S_{X,Y}}{\dim(X)}$ defines a ring homomorphism from $\Gr(\C)$ to $\FC$, $\forall Y\in\Q(\C)$; notice that the set of formal codegrees of $\C$ is  $\{\frac{\dim(\C)}{\dim(X)^2}|X\in\Q(\C)\}$ due to the Verlinde formula (\ref{Verlinde}). Since $\sigma\circ h_X(-)$ is also a ring homomorphism  of $\Gr(\C)$, where $\sigma\in\Gal(\overline\FQ/\FQ)$,  there is a unique simple object $Y$ such that $\sigma\circ h_X(-)=h_Y(-)$. Hence, there is a unique  permutation $\hat\sigma$ of $\Q(\C)$ such that $\hat\sigma(X)=Y$ and $\sigma\circ h_X(-)=h_{\hat\sigma(X)}(-)$. We call the subset
\begin{align*}
\Q_X(\C):=\{Y\in\Q(\C)|Y=\hat\sigma(X) ~\text{for some $\sigma\in\Gal(\overline\FQ/\FQ)$}\}
\end{align*}
the Galois orbit of the simple object $X$. When $\Q_I(\C)=\Q(\C)$, then $\C$ is said to be transitive; transitive modular fusion categories are classified   explicitly \cite[Theorem II]{NgWaZh}.

The modular data $(S,T)$ of a modular fusion category $\C$ is also connected closely with the congruence representations of the modular group $\SL(2,\Z)$ \cite{BNRW1,BNRW2,DongLNg}, which  is generated by $\hxs=\left(
                                                      \begin{array}{cc}
                                                        0 & 1 \\
                                                        -1 & 0 \\
                                                      \end{array}
                                                    \right)
$ and $\hxt=\left(
              \begin{array}{cc}
                1 & 1 \\
                0 & 1 \\
              \end{array}
            \right)
$ with relations $\hxs^4=1$ and $(\hxs\hxt)^3=\hxs^2$. Explicitly, the morphism $\hxs\mapsto\frac{1}{\sqrt{\dim(\C)}}S$, $\hxt\mapsto T$ defines a projective representation of $\SL(2,\Z)$ \cite[Theorem 8.16.1]{EGNO}, where $\sqrt{\dim(\C)}$ is the positive square root of $\dim(\C)$.

Let $\tau^\pm(\C):=\sum_{X\in\Q(\C)}\dim(X)^2\theta_X^\pm$
be the Gauss sums of $\C$ \cite{DrGNO2,EGNO}, then $\xi(\C):=\frac{\tau^+(\C)}{\sqrt{\dim(\C)}}$ is called the multiplicative central charge of $\C$. It follows from \cite{BNRW2,DongLNg} that there always exists a third root $\xi$ of  $\xi(\C)$ such that  $s:=\rho_\C(\hxs)=\frac{1}{\sqrt{\dim(\C)}}S$ and $t:=\rho_\C(\hxt)=\frac{1}{\xi}T$ defines a finite-dimensional congruence representation $\rho_\C$ of $\SL(2,\Z)$ of level $N$,  that is, $\ker(\rho_\C)$ is a congruence subgroup of level $N=\text{ord}(t)$. Moreover, $\FQ(S)\subseteq\FQ(T)\subseteq\FQ(t)$
\cite[Theorem II]{DongLNg}. Let $t_X:=\theta_X/\xi$, the normalized ribbon structure of the simple object $X$,  $\forall X\in\Q(\C)$, then we have the Galois symmetry \cite[Theorem II]{DongLNg}, that is, $\sigma^2(t_X)=t_{\hat\sigma(X)}$, $\forall \sigma\in\Gal(\FQ(t)/\FQ)$.

Let $\rho$ be a finite-dimensional congruence representation of $\SL(2,\Z)$ of level $n$, where $n$ is a positive integer. Then $\rho$ factors through the finite groups \begin{align*}
\SL(2,\Z_n)\cong\SL(2,\Z_{p_1^{n_1}})\times\cdots\times\SL(2,\Z_{p_r^{n_r}})
\end{align*} and
$\rho\cong \otimes_{j=1}^r\rho_{p_j}$ by the Chinese Reminder Theorem, where $n=\prod_{j=1}^rp_j^{n_j}$ and $p_j$ are distinct primes, $\rho_{p_j}$ are finite-dimensional representations of subgroups $\SL(2,\Z_{p_j^{n_j}})$. Moreover, given an arbitrary  prime $p$, all finite-dimensional irreducible representations of the group $\SL(2,\Z_{p^m})$ are completely classified and constructed explicitly in \cite{Nob,NobWol}. A  finite-dimensional congruence representation $\rho$ of the modular group $\SL(2,\Z)$  is said to be non-degenerate if the eigenvalues of $\rho(t)$ are distinct; non-degenerate finite-dimensional congruence representations are  irreducible \cite[Lemma 1]{Eholzer2}.  In addition, the set of eigenvalues of $\rho(t)$ is called the $\hxt$-spectrum of $\rho$ following \cite{BNRW2,NgRWW,PSYZ}; we note that the $\hxt$-spectrum of any finite-dimensional irreducible representation of $\SL(2,\Z_{p^m})$ is produced in \cite[Appendix]{PSYZ}.

The following remark is known to experts, we list it here for the reader's convenience, and it will play a key role in the arguments of this paper.
\begin{rema}\label{numGalcon}Let $\C$ be a non-trivial modular fusion category with $N(\dim(\C))=p^N$, where $p$ is an odd prime. Then there exists a simple object $X$ such that $p$ divides $\text{ord}(t_X)$.

Indeed, if $t_Y=1$ for all $Y\in\Q(\C)$, then $\xi=1$ as $t_I=1/\xi=1$, so $\theta_Y=1$ $(\forall Y\in\Q(\C))$. Notice that the balancing equation \cite{EGNO} then implies that $s_{X,Y}=\dim(X)\dim(Y)$, particularly the $S$-matrix of $\C$ can't be non-degenerate, it is a contradiction.

Moreover, let $X$ be the simple object of $\C$ such that $\text{ord}(t_X)=p^n$ is maximal. Then
\begin{align*}
\rank(\C)\geq \frac{p^{n-1}(p-1)}{2}.
\end{align*}
By the Galois symmetry of modular fusion categories \cite{DongLNg}, we have $\sigma^2(t_X)=t_{\hat{\sigma}(X)}$, then \begin{align*}
\Gal(\FQ(t_X)/\FQ)\cdot t_X={\{t_{\hat{\sigma}(X)}=\sigma^2(t_X)|\sigma\in\Gal(\FQ(t_X)/\FQ)}\},
\end{align*}
hence the number of Galois conjugates of $X$ is greater or equal to the order of the following subgroup
\begin{align*}
\Gal(\FQ(t_X)/\FQ)^2:={\{\sigma^2|\sigma\in\Gal(\FQ(t_X)/\FQ)}\}
\end{align*}
of $\Gal(\FQ(t_X)/\FQ)$. It follows immediately that the order of $\Gal(\FQ(t_X)/\FQ)^2$ is exactly $\frac{p^{n-1}(p-1)}{2}$, since $\Gal(\FQ(t_X)/\FQ)$ is a cyclic group of order $p^{n-1}(p-1)$.
\end{rema}
Throughout this paper, we always use $\rho_0$ to denote the trivial representation of $\SL(2,\Z)$.
\begin{exam}\label{example}Let $p\geq5$ be a prime, and let $\C:=\C(\ssl_2,2(p-1))$. Then $\C$ contains a unique non-trivial connected \'{e}tale algebra $A$ \cite[Theorem 6.5]{KiO}, i.e., the regular algebra of Tannakian subcategory $\text{Rep}(\Z_2)\subseteq\C$, such that $\C_A^0$ is a modular fusion category and
\begin{align*}
\dim(\C_A^0)=\frac{\dim(\C)}{\dim(A)^2}=\frac{p}{4\cos^2(\frac{d\pi}{p})}
\end{align*}
by \cite[Theorem 4.5]{KiO}, where $d=\frac{p+1}{2}$. Moreover, the simple objects  of $\C_A^0$ are also characterized explicitly in \cite[Theorem 7.1]{KiO}, i.e., $\Q(\C_A^0)=\{V_0,V_2,\cdots,V_{p-3},V_{p-1}^+,V_{p-1}^-\}$, and
\begin{align*}
\dim(V_j)=\frac{\zeta_{4p}^{j+1}-\zeta_{4p}^{-(j+1)}}{\zeta_{4p}-\zeta_{4p}^{-1}}
=\frac{\cos(\frac{(d-(j+1))\pi}{p})}{\cos(\frac{(d-1)\pi}{p})},0\leq j\leq p-3,
\dim(V_{p-1}^\pm)=\frac{1}{2\cos(\frac{(d-1)\pi}{p})}.
\end{align*}
Hence, the  formal codegrees of $\C(\ssl_2,2(p-1))_A^0$ are
\begin{align*}
p (\text{twice}),\sigma\left(\frac{p}{4\cos^2(\frac{d\pi}{p})}\right)~\text{where}~
\sigma\in\Gal(\FQ(\zeta_p)^+/\FQ).
\end{align*}
In particular, if $p=5$, then $\C(\ssl_2,8)_A^0\cong\C(\ssl_2,3)_\ad\boxtimes\C(\ssl_2,3)_\ad$ as modular fusion category.

By \cite[Theorem 1.17]{KiO}, $\theta_{V_j}=e^{2\pi i\cdot\frac{j(j+2)}{8p}}$ and $\theta_{V_{p-1}^\pm}=\zeta_p^\frac{p^2-1}{8}$. Notice that the multiplicative central charge
$\xi(\C_A^0)=\xi(\C)=e^{2\pi i\cdot\frac{6(p-1)}{16p}}$, let $\xi$ be a third root of $\xi(\C_A^0)$, so $\xi$ is an $24$-th root of $\zeta_p^{3(p-1)}$, while $\zeta_p^{p-1}=\zeta_p^{(p-1)+p(p-1)}=\zeta_p^{p^2-1}$, hence we can choose $\xi=\zeta_p^\frac{p^2-1}{8}$ to be  the third root of $\xi(\C_A^0)$, then
the $\hxt$-spectrum of the normalized $t$-matrix is
\begin{align*}
\left\{\sigma^2(\zeta_p^{-\frac{p^2-1}{8}})\bigg|\sigma\in\Gal(\FQ(\zeta_p)/\FQ)\right\}
\cup\{1(\text{twice})\}.
\end{align*}

Therefore,
the associated modular representation  $\rho_{\C_A^0}\cong\rho_1\oplus\rho_0$, where $\rho_1$ is a $d$-dimensional irreducible representation of $\SL(2,\Z)$ of level $p$ and $\rho_0$ is the trivial representation of $\SL(2,\Z)$.
\end{exam}

\section{Modular fusion categories with  $N(\dim(\C))=p^m$}\label{section3}
In this section, we always assume $p$ is a prime, and we study the structures of modular fusion categories $\C$ such that $N(\dim(\C))$ is a power of $p$. Notice that if $N(\dim(\C))=p$ and $\dim(\C)\notin\Z$, then $\C$ is braided tensor equivalent to a Galois conjugate of $\C(\ssl_2,3)_\ad$ \cite{Schopieray}.

\subsection{Modular fusion category $\C$ with $N(\dim(\C))=p^2$}

Let  $\C$ be a modular fusion category with $N(\dim(\C))=p^2$. Then
\cite[Proposition 4.11]{Schopieray} says that $\frac{1}{2}d_{\dim(\C)}\leq2$, where $d_{\dim(\C)}=[\FQ(\dim(\C)):\FQ]$, that is, $\dim(\C)=1,2,3,4$. Note that $\FQ(\dim(\C))\subseteq\FQ(T_\C)$ and the Cauchy's Theorem \cite[Theorem 3.9]{BNRW1} shows  that $\text{ord}(T_\C)=p^m$ for some positive integer $m$. Since $\FQ(\dim(\C))$ is a real subfield of $\FQ(T_\C)$, $\FQ(\dim(\C))\subseteq\FQ(\zeta_{p^n})^+$, the maximal real subfield of $\FQ(\zeta_{p^n})$, and the Galois group $\text{Gal}(\FQ(\zeta_{p^n})/\FQ)$ is
\begin{align*}
\text{Gal}(\FQ(\zeta_{p^n})/\FQ)\cong(\Z_{p^n})^*=\left\{
                                             \begin{array}{ll}
                                               \Z_{p^{n-1}}\times\Z_{p-1}, & \hbox{$p>2$;} \\
                                               \Z_2, & \hbox{$p=2$ and $n=2$;} \\
                                               \Z_2\times\Z_{2^{n-2}}, & \hbox{$p=2$ and $n\geq3$.}
                                             \end{array}
                                           \right.
\end{align*}
and $[\FQ(\zeta_{p^n}):\FQ(\zeta_{p^n})^+]=2$. Moreover, for any odd prime $p$, it is well-known that the cyclotomic field  $ \FQ(\zeta_{p^n})$ contains a unique quadratic subfield $\FQ(\sqrt{p^*})$, where $p^*:=(-1)^\frac{p-1}{2}p$.

We first  prove the following lemma, which is a direct corollary of
\cite[Theorem 4.4]{Schopieray}.
 \begin{lemm}\label{P^mdegd>2}Let $\C$ be a modular fusion category such that $N(\dim(\C))=p^m$, where $p>3$ is a prime. If $d_{\dim(\C)}>m$, then $m=\frac{p-3}{2}$ and $\C$ is braided equivalent to a Galois conjugate of the transitive modular fusion category $\C(\ssl_2,p-2)_\text{ad}$.
 \end{lemm}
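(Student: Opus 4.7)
The plan is to invoke \cite[Theorem 4.4]{Schopieray} essentially as a black box and then carry out a short dimension computation. Schopieray's theorem, judging from its companion Proposition 4.11 already used earlier in the section, should control the shape of $\dim(\C)$ when $N(\dim(\C))=p^m$; under the sharp hypothesis $d_{\dim(\C)}>m$ it is expected to confine $\C$ to a very restricted list. I expect this list to consist precisely of the Galois conjugates of the transitive modular fusion category $\C(\ssl_2,p-2)_{\text{ad}}$, which is consistent with the classification of transitive modular fusion categories \cite[Theorem II]{NgWaZh} and with the Galois orbit count guaranteed by Remark \ref{numGalcon}.

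The first step, accordingly, is to cite \cite[Theorem 4.4]{Schopieray} to conclude that $\C$ must be braided equivalent to some Galois conjugate $\C(\ssl_2,p-2)_{\text{ad}}^\sigma$. The second step is to pin down the value of $m$ from this identification. Using the quantum dimensions $\dim(V_{2j})=\sin((2j+1)\pi/p)/\sin(\pi/p)$ for $0\leq j\leq (p-3)/2$ in the subcategory $\C(\ssl_2,p-2)_{\text{ad}}$, together with the symmetry $\sin^2(k\pi/p)=\sin^2((p-k)\pi/p)$ (which splits the standard sum $\sum_{k=1}^{p-1}\sin^2(k\pi/p)=p/2$ equally between odd and even $k$), one obtains
\begin{align*}
\dim(\C(\ssl_2,p-2)_{\text{ad}})=\frac{p}{4\sin^2(\pi/p)}.
\end{align*}
Its Galois conjugates over $\Gal(\FQ(\zeta_p)^+/\FQ)$ are $\frac{p}{4\sin^2(j\pi/p)}$ for $1\leq j\leq (p-1)/2$, and the classical identity $\prod_{j=1}^{p-1}\sin(j\pi/p)=p/2^{p-1}$ then gives $N(\dim(\C))=p^{(p-3)/2}$, so $m=(p-3)/2$ as claimed. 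The consistency check $d_{\dim(\C)}=(p-1)/2>(p-3)/2=m$ confirms that the extremal hypothesis of the lemma is met.

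The main obstacle is purely interpretational: ensuring that Schopieray's theorem applies exactly in the form used here and that the word ``corollary'' is justified with no additional case analysis. If instead the theorem merely characterizes $\dim(\C)$ up to a finite list of Verlinde-type values $\frac{p}{4\sin^2(j\pi/p)}$ without directly identifying $\C$, one would supplement the argument by combining the $d$-number test and cyclotomic test from section \ref{preliminaries} with the $\hxt$-spectrum classification for irreducible congruence representations of $\SL(2,\Z_{p^n})$ recalled there, together with the Galois symmetry $\sigma^2(t_X)=t_{\hat\sigma(X)}$, to force the full modular data to match that of $\C(\ssl_2,p-2)_{\text{ad}}^\sigma$ and rule out any spurious candidates.
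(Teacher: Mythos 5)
Your second step (the norm computation giving $N(\dim(\C(\ssl_2,p-2)_{\ad}))=p^{(p-3)/2}$, hence $m=\frac{p-3}{2}$) is correct and matches the paper. The problem is the first step. You treat \cite[Theorem 4.4]{Schopieray} as if it directly outputs a braided equivalence with a Galois conjugate of $\C(\ssl_2,p-2)_{\ad}$; it does not. What it gives (under a hypothesis you never verify) is that \emph{all formal codegrees of $\C$ are Galois conjugates of $\dim(\C)$}. Two ingredients are missing. First, the lemma's hypothesis $d_{\dim(\C)}>m$ is never actually used in your argument except as a ``consistency check'' at the end, whereas it is exactly what makes Schopieray's theorem applicable: by \cite[Proposition 4.2]{Schopieray}, $N(\dim(\C))=p^m$ with $m<d_{\dim(\C)}$ forces $p\nmid\dim(\C)$, hence no rational integer $q\geq2$ divides $\dim(\C)$, which is the condition under which Theorem 4.4 yields the statement about formal codegrees. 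Without this, your invocation of the theorem is unjustified.

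Second, the bridge from ``all formal codegrees are Galois conjugates of $\dim(\C)$'' to the identification of $\C$ is the real content of the proof, and you only gesture at it. In a modular category every formal codegree has the form $\frac{\dim(\C)}{\dim(X)^2}$, so for each simple $X$ there is $\sigma$ with $\frac{\dim(\C)}{\dim(X)^2}=\sigma(\dim(\C))=\frac{\dim(\C)}{\dim(\hat\sigma(I))^2}$, and orthogonality of the $S$-matrix then forces $X=\hat\sigma(I)$; hence $\Q_I(\C)=\Q(\C)$, i.e.\ $\C$ is transitive, and it is prime transitive because $N(\dim(\C))$ is a power of a single prime. Only at this point does the classification of transitive modular categories \cite{NgWaZh} give $\C\cong\C(\ssl_2,p-2)^\sigma_{\ad}$. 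Your fallback plan (matching the full modular data via $d$-number/cyclotomic tests, the $\hxt$-spectrum of irreducible $\SL(2,\Z_{p^n})$-representations and Galois symmetry) is not the mechanism needed here and would be a substantially harder and uncertain route: knowing a list of candidate dimension values does not by itself pin down the modular data, whereas the transitivity argument above does the job in a few lines.
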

\begin{proof}
Since  $N(\dim(\C))=p^m$ and $ m<d_{\dim(\C)}$, $p$ does not divide $\dim(\C)$ by \cite[Proposition 4.2]{Schopieray}. Thus, for any integer $q\in\Z_{\geq2}$, $q$ does not divides  $\dim(\C)$. Hence, all formal codegrees of $\C$ are Galois conjugates of $\dim(\C)$  \cite[Theorem 4.4]{Schopieray}. Note that $\C$ is a modular fusion category, formal codegrees of $\C$ all have the form $\frac{\dim(\C)}{\dim(X)^2}$ for simple objects $X$ of $\C$, so for any   object $X\in\Q(\C)$, there exists a $\sigma\in\text{Gal}(\FQ(\dim(\C))/\FQ)$ such that
 \begin{align*}
 \frac{\dim(\C)}{\dim(X)^2}=\sigma(\dim(\C))
 =\sigma\left(\frac{\dim(\C)}{\dim(I)^2}\right)=\frac{\dim(\C)}{\dim(\hat{\sigma}(I))^2},
 \end{align*}
 and orthogonality of $S$-matrix of $\C$ means $\hat{\sigma}(I)=X$. Therefore, $\C$ is  a transitive modular fusion category in sense of  \cite{NgWaZh}, then $\C$ must be  a prime transitive modular fusion category for $N(\dim(\C))=p^m$.
Hence it follows from \cite[Theorem 1.1]{NgWaZh} that  $\C\cong\C(\ssl_2,p-2)^\sigma_\text{ad}$ as modular fusion category, where $\sigma\in\Gal(\FQ(\zeta_p)/\FQ)$. Since
 \begin{align*}
 \dim(\C(\ssl_2,p-2)_\text{ad})=\frac{p}{4\text{sin}^2(\pi/p)},~    N\left(\frac{p}{4\text{sin}^2(\pi/p)}\right)=p^{(p-3)/2},
  \end{align*}
  which implies  $m=\frac{p-3}{2}$ and $d_{\dim(\C)}=\frac{p-1}{2}$.
\end{proof}
\begin{coro}\label{P^2degd>2}
Let $\C$ be a modular fusion category such that $N_{\FQ_\C}(\dim(\C))=p^2$. If $d_{\dim(\C)}>2$, then $p=7$ and $\C\cong\C(\ssl_2,5)^\tau_\text{ad}$, where $\tau\in\Gal(\FQ(\zeta_7)/\FQ)$.
 \end{coro}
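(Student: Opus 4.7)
The plan is to invoke Lemma \ref{P^mdegd>2} with $m = 2$ directly; the corollary is essentially a numerical unpacking of that lemma at this value of $m$. The hypothesis $N(\dim(\C)) = p^2$ places us in the lemma's setting with $m = 2$, and the additional assumption $d_{\dim(\C)} > 2$ is precisely the condition $d_{\dim(\C)} > m$ required by the lemma. For any prime $p > 3$, the lemma then forces the equality $m = (p-3)/2$, that is, $(p-3)/2 = 2$, which uniquely determines $p = 7$. Moreover, the lemma identifies $\C$ as a Galois conjugate $\C(\ssl_2, p-2)_\ad^\sigma = \C(\ssl_2, 5)_\ad^\tau$ for some $\tau \in \Gal(\FQ(\zeta_7)/\FQ)$, which is exactly the desired conclusion.

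As a sanity check, I would confirm that the conclusion is not vacuous at $p = 7$: the categorical dimension $\dim(\C(\ssl_2,5)_\ad) = 7/(4\sin^2(\pi/7))$ indeed has norm $7^{(7-3)/2} = 49$ and minimal polynomial of degree $(p-1)/2 = 3 > 2$, so both hypotheses $N(\dim(\C)) = p^2$ and $d_{\dim(\C)} > 2$ are genuinely realized at $p = 7$, and no numerical coincidence spoils the uniqueness.

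The small primes $p \in \{2,3\}$ technically fall outside the scope of Lemma \ref{P^mdegd>2}, but the corollary should be read as implicitly assuming $p > 3$, consistent with the paper's stated focus on $p \geq 5$. If one wished to cover $p = 2, 3$ for completeness, the bound $d_{\dim(\C)} \leq 4$ recalled just before the lemma, combined with $\FQ(\dim(\C)) \subseteq \FQ(\zeta_{p^n})^+$ (whose Galois group over $\FQ$ is cyclic of $p$-power order for these small primes), leaves only $d = 4$ for $p = 2$ and $d = 3$ for $p = 3$ as the cases potentially violating $d \leq 2$; each admits a short ad hoc exclusion via the formal-codegree equation \eqref{classeqt} and the rank/dimension constraints. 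Since the whole corollary is a one-line deduction from the lemma, the only real ``obstacle'' is pinning down the convention on $p$, not any mathematical difficulty.
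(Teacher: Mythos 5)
Your proposal is correct and matches the paper's intent exactly: the corollary is stated without a separate proof precisely because it is the specialization $m=2$ of Lemma \ref{P^mdegd>2}, so $d_{\dim(\C)}>2=m$ forces $(p-3)/2=2$, i.e.\ $p=7$ and $\C\cong\C(\ssl_2,5)^\tau_\text{ad}$. Your added remarks on realizability at $p=7$ and on the small primes $p=2,3$ are harmless extras beyond what the paper records.
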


Let $\C$ be a modular fusion category, $\D:=\boxtimes_{\sigma\in\Gal(\FQ(\dim(\C))/\FQ)}\C^\sigma$, then
\begin{align*}
\dim(\D)=N(\dim(\C))=\Pi_{\sigma\in\Gal(\FQ(\dim(\C))/\FQ)}\dim(\C^\sigma)
=\Pi_{\sigma\in\Gal(\FQ(\dim(\C))/\FQ)}\sigma(\dim(\C)).
\end{align*}
Hence,   \cite[Lemma 4.2.2, Remark 4.2.3]{O3} say that
\begin{align}\label{NormrankIneq}
\rank(\D)=\rank(\C)^{d_{\dim(\C)}}\leq  \dim(\D),
\end{align}
and $\rank(\D)=\dim(\D)$ if and only if $\D$ is pointed. In particular, if $N(\dim(\C))=p^2$, then $\rank(\C)\leq p^2$. When $p=2$, it is easy to see $\dim(\C)=4$, then $\C\cong\I$ is an Ising category or $\C$ is pointed \cite[Example 5.1.2]{O3}; when $p=3$, $d_{\dim(\C)}=1$, that is, $\dim(\C)=9$, then $\C$ is either pointed or $\C$ is braided equivalent to a Galois conjugate of $\C(\mathfrak{so}_5,9,\zeta_{18})_\ad$ by \cite[Theorem]{Yu}, where $\zeta_{18}$ is a primitive $18$-root of unity.

\begin{lemm}\label{degd=2}
Let $\C$ be a modular fusion category such that $N(\dim(\C))=p^2$, where $p$ is a prime. If $d_{\dim(\C)}=2$, then $p=5$, and as a modular fusion category
\begin{align*}
\C\cong\C(\ssl_2,3)_\text{ad}\boxtimes\C(\ssl_2,3)_\text{ad}~\text{or}~ \C\cong\C(\ssl_2,3)_\text{ad}^\sigma\boxtimes\C(\ssl_2,3)_\text{ad}^\sigma,
 \end{align*}
where  $\sigma\in\text{Gal}(\FQ(\zeta_5)/\FQ)$ such that $\sigma(\sqrt{5})=-\sqrt{5}$.
\end{lemm}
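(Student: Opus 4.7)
The plan is to combine the number-theoretic constraints imposed by $N(\dim(\C)) = p^2$ together with $d_{\dim(\C)} = 2$ with Corollary \ref{ptimesunit} to reduce to $p = 5$, and then to upgrade the resulting Grothendieck equivalence to a braided equivalence via M\"uger's centralizer theorem.

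First, I would determine the admissible primes. By Cauchy's theorem $\text{ord}(T_\C) = p^n$, so $\dim(\C) \in \FQ(T_\C) \subseteq \FQ(\zeta_{p^n})$; since $\dim(\C)$ is totally real, $\FQ(\dim(\C))$ is a real quadratic subfield of $\FQ(\zeta_{p^n})^+$. For $p=2$ or $p=3$ the discussion preceding the lemma already forces $\dim(\C) \in \Z$, contradicting $d_{\dim(\C)}=2$. For odd $p \geq 5$, the group $\Gal(\FQ(\zeta_{p^n})^+/\FQ) \cong \Z_{p^{n-1}} \times \Z_{(p-1)/2}$ possesses an index-$2$ subgroup exactly when $(p-1)/2$ is even, i.e.\ $p \equiv 1 \pmod 4$, in which case the unique real quadratic subfield is $\FQ(\sqrt{p})$.

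Next, in the ring of integers $\Z[\sqrt{p}]$ the prime $p$ ramifies as $(\sqrt{p})^2$, so $(p^2)=(\sqrt{p})^4$ and the ramified prime is Galois-fixed. The factorization $(\dim(\C))\cdot(\sigma(\dim(\C))) = (\sqrt{p})^4$ is therefore Galois-symmetric, which forces $(\dim(\C))=(\sqrt{p})^2=(p)$, i.e.\ $\dim(\C)=p u$ for a totally positive unit $u$, with $u\notin\Q$ (else $\dim(\C)\in\Z$). Corollary \ref{ptimesunit} then yields that $\C$ is non-pointed and Grothendieck equivalent to $\C(\ssl_2,2(p-1))_A^0$.

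Since Grothendieck-equivalent categories share the same formal codegrees, and by Example \ref{example} the formal codegrees of $\C(\ssl_2,2(p-1))_A^0$ are $p$ (twice) together with the $\Gal(\FQ(\zeta_p)^+/\FQ)$-orbit of $p/(4\sin^2(\pi/(2p)))$, the condition $\dim(\C)\notin\Z$ places $\dim(\C)$ in that orbit. Using $\sin^2(\pi/(2p))=(1-\cos(\pi/p))/2$ one sees $\FQ(p/(4\sin^2(\pi/(2p)))) = \FQ(\cos(\pi/p)) = \FQ(\zeta_p)^+$, which has degree $(p-1)/2$ over $\FQ$; hence $d_{\dim(\C)}=(p-1)/2$. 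The equation $(p-1)/2 = 2$ forces $p=5$, and the two totally positive candidates for $\dim(\C)$ are $(15 \pm 5\sqrt{5})/2$, matching $\dim(\C(\ssl_2,3)_\text{ad}\boxtimes\C(\ssl_2,3)_\text{ad})$ and $\dim(\C(\ssl_2,3)_\text{ad}^\sigma\boxtimes\C(\ssl_2,3)_\text{ad}^\sigma)$, both of which are realised (the first as $\C(\ssl_2,8)_A^0$ by Example \ref{example}).

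Finally, I would promote the Grothendieck equivalence to a braided equivalence. The Grothendieck ring $\Gr(\C)\cong\Gr(\C(\ssl_2,3)_\text{ad})^{\otimes 2}$ contains a rank-$2$ fusion subring spanned by one tensor factor, which lifts to a rank-$2$ fusion subcategory $\D\subseteq\C$ with Fibonacci fusion rules. A rank-$2$ symmetric fusion category must be pointed, so $\D$ is not symmetric, hence its M\"uger centre is trivial and $\D$ is modular. M\"uger's centralizer theorem then gives $\C\cong\D\boxtimes\D_\C'$, with $\D_\C'$ another rank-$2$ modular category of Fibonacci type. Since the only modular structures on the Fibonacci fusion ring are $\C(\ssl_2,3)_\text{ad}$ and $\C(\ssl_2,3)_\text{ad}^\sigma$, four tensor products are possible, but the two mixed ones both have integer global dimension $5$ and so are excluded by $d_{\dim(\C)}=2$, leaving exactly the two categories stated. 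I expect the main obstacle to be this final step, specifically verifying that the Fibonacci fusion subring really corresponds to a non-degenerate (rather than merely braided) fusion subcategory, for which a direct check of the $S$-matrix restriction via the Verlinde formula seems the cleanest route.
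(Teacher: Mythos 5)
Your opening steps (Cauchy's theorem, $\FQ(\dim(\C))=\FQ(\sqrt{p})$ with $p\equiv 1\pmod 4$, and the ideal-theoretic observation that $\dim(\C)=p\cdot u$ for a totally positive unit $u\notin\FQ$) are correct and agree with the paper. The fatal problem is the next step: you invoke Corollary \ref{ptimesunit} to conclude that $\C$ is Grothendieck equivalent to $\C(\ssl_2,2(p-1))_A^0$ and thereby force $d_{\dim(\C)}=\frac{p-1}{2}$, hence $p=5$. But Corollary \ref{ptimesunit} appears \emph{after} Lemma \ref{degd=2}, and its proof disposes of exactly your case by writing ``If $[\FQ(u):\FQ]=2$, then \dots\ by Lemma \ref{degd=2}''; the genuinely independent part of that corollary (the $\SL(2,\Z)$-representation analysis) is carried out only under the assumption $[\FQ(u):\FQ]>2$. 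So your route to $p=5$ is circular, and the essential content of the lemma is missing from your proposal: one must bound the unit $u$ directly. The paper does this by writing $u=\epsilon_p^{n}$ ($n$ even, nonzero) via the fundamental unit of $\FQ(\sqrt p)$, using Ostrik's bound that every Galois conjugate of $\dim(\C)$ exceeds $\frac{4\sqrt3}{5}$ to get $\epsilon_p<\sqrt{\frac{5p}{4\sqrt3}}$, and then showing via $\epsilon_p=\frac{a_p+b_p\sqrt p}{2}$ that this fails for every $p\equiv1\pmod4$ with $p\geq 13$. Without some such quantitative unit estimate you cannot exclude, say, $\dim(\C)=13\,\epsilon_{13}^{\pm2}$.

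The $p=5$ endgame also diverges from the paper and is thinner than you acknowledge. The paper pins down $\FPdim(\C)$ (using $5\mid\FPdim(\C)$ from Schopieray's transitivity criterion, $N(\FPdim(\C))=25$, and Ostrik's lower bound to get $\FPdim(\C)=\frac{15+5\sqrt5}{2}$), bounds $\rank(\C)\leq4$ by the norm--rank inequality (\ref{NormrankIneq}), and then quotes the argument of Ostrik's Example 5.1.2(v); your alternative of lifting a Fibonacci fusion subring to a modular subcategory and splitting off a centralizer is a reasonable sketch, but your final claim that ``the only modular structures on the Fibonacci fusion ring are $\C(\ssl_2,3)_\text{ad}$ and $\C(\ssl_2,3)_\text{ad}^\sigma$'' is not accurate as stated (there are four ribbon categorifications, including reversed braidings, and the mixed product $\C(\ssl_2,3)_\text{ad}\boxtimes\C(\ssl_2,3)_\text{ad}^{\text{rev}}$ also has non-integer dimension, so your dimension count does not eliminate it). That final bookkeeping would need the more careful case analysis the paper delegates to Ostrik's example, but the decisive gap remains the circular appeal to Corollary \ref{ptimesunit} in place of the fundamental-unit bound.
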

\begin{proof}
By using the Cauchy theorem
\cite[Theorem 3.9]{BNRW1} and \cite[Proposition 4.2]{Schopieray}, we know that $\dim(\C)\in\FQ(\zeta_p^s)$ for some positive integer $s$. If $d_{\dim(\C)}=2$, then $\frac{\dim{(\C)}}{p}$ is a totally positive algebraic unit in the unique quadratic subfield $\FQ(\sqrt{p})$ of $\FQ(\zeta_p^s)$, in particular, $p\equiv1 (\text{mod}~4)$ and $N(\frac{\dim{(\C)}}{p})=1$. Let $\sigma\in\text{Gal}(\FQ(\sqrt{p})/\FQ)$ be the unique non-trivial element, then $\sigma(\epsilon_p)\epsilon_p=-1$ since $N_{\FQ(\sqrt{p})}(\epsilon_p)=-1$,  where $\epsilon_p$ is the fundamental unit in the quadratic field $\FQ(\sqrt{p})$.  As $\dim(\C)$ is a totally positive algebraic integer, we have $\sigma(\dim(\C))>0$, so $\frac{\dim{(\C)}}{p}=\epsilon_p^n$ for  a nonzero even integer $n$ by the Fundamental unit theorem \cite[Theorem 11.5.1]{AW}. Besides, for  the Galois conjugate $\sigma(\dim(\C))$ of $\dim(\C)$, we have $\sigma(\dim(\C))>\frac{4\sqrt{3}}{5}$  and $\dim(\C)>\frac{4\sqrt{3}}{5}$ by \cite[Proposition A.1.1]{O3}. Hence without loss of generality, we can assume $n\geq2$ below, then
\begin{align*}
\sigma(\dim(\C))=p\epsilon_p^{-n}>\frac{4\sqrt{3}}{5},
\end{align*}
also $\epsilon_p^{-n}\leq\epsilon_p^{-2}$ as $n\geq2$, thus
\begin{align*}
p\epsilon_p^{-2}\geq p\epsilon_p^{-n}=\sigma(\dim(\C))>\frac{4\sqrt{3}}{5}.
\end{align*}
which then implies $\epsilon_p<\sqrt{\frac{5p}{4\sqrt{3}}}$.

Let $\epsilon_p=\frac{a_p+b_p\sqrt{p}}{2}$ for positive integers $a_p,b_p$. If $b_p\geq2$, then $\epsilon_p>\sqrt{\frac{5p}{4\sqrt{3}}}$; if $b_p=1$, then $a_p^2-pb_p^2=-4$, i.e., $a_p=\sqrt{p-4}$, we also have $\epsilon_p>\sqrt{\frac{5p}{4\sqrt{3}}}$ if $p>5$. In fact, let $p>5$ and $p\equiv1 (\text{mod}~4)$, $\epsilon_p>\sqrt{\frac{5p}{4\sqrt{3}}}$ if and only if $\frac{\sqrt{p-4}}{2}>\left(\sqrt{\frac{5}{4\sqrt{3}}}-\frac{1}{2}\right)\sqrt{p}$, which is equivalent to $\sqrt{\frac{5}{4\sqrt{3}}}-\frac{5}{4\sqrt{3}}>\frac{1}{p}$, while we have inequalities
\begin{align*}
\sqrt{\frac{5}{4\sqrt{3}}}-\frac{5}{4\sqrt{3}}>0.12>\frac{1}{13}\geq\frac{1}{p}, ~\text{when $p\geq13$},
\end{align*}
which is a contradiction for $p\geq13$. Then $p=5$.

If $p=5$ and  $d_{\dim(\C)}=2$, then  $\FQ(\dim(\C))=\FQ(\sqrt{5})$. Since  $\C$ is not a transitive modular fusion category,   $5\mid \FPdim(\C)$ by \cite[Theorem 4.4]{Schopieray}. Meanwhile, \cite[Proposition 4.2]{Schopieray} implies $N(\FPdim(\C))=25$ as $N(\dim(\C))=25$. Therefore, $\FPdim(\C)=5\epsilon_5^n$, where $n$ is a positive even integer. Let $\sigma\in\text{Gal}(\FQ(\zeta_5/\FQ)$ such that $\sigma(\sqrt{5})=-\sqrt{5}$, notice that $\FPdim(\C)$ and $\sigma(\FPdim(\C))$ are formal codegrees of $\C$, so $\sigma(\FPdim(\C))>\sqrt{\frac{4}{3}}$ by \cite[Theorem 4.2.1]{O3}, which implies $n=2$ and $\FPdim(\C)=\frac{15+5\sqrt{5}}{2}$. Since $\rank(\C)\leq 4$ by equation (\ref{NormrankIneq}), by using the argument of \cite[Example 5.1.2(v)]{O3}, we obtain that $\C\cong\C(\ssl_2,3)_\text{ad}\boxtimes\C(\ssl_2,3)_\text{ad}$ or $\C\cong\C(\ssl_2,3)_\text{ad}^\sigma\boxtimes\C(\ssl_2,3)_\text{ad}^\sigma$ as modular fusion category.
\end{proof}
\begin{rema}
 It follows from the proof of Lemma \ref{degd=2} and \cite[Example 5.1.2]{O3} that a non-pointed modular fusion category $\C$ is Grothendieck equivalent to $\C(\ssl_2,8)_A^0$ if and only if $\dim(\C)=5\epsilon_5^m$, where $m=0,\pm2$.
\end{rema}
In summary, Corollary \ref{P^2degd>2} and Lemma \ref{degd=2} imply the following theorem:
\begin{theo}\label{normcases}Let $\C$ be a modular fusion category with $N(\dim(\C))=p^2$, where $p$ is a prime. Then either $\dim(\C)=p^2$, or $p=5,7$. Moreover, if $\dim(\C)\neq p^2$, then
\begin{align*}
\C\cong\left\{
         \begin{array}{ll}
           \C(\ssl_2,3)^\sigma_\text{ad}\boxtimes \C(\ssl_2,3)^\sigma_\text{ad}, & \hbox{if  $p=5$;} \\
          \C(\ssl_2,5)^\tau_\text{ad}, & \hbox{if $p=7$.}
         \end{array}
       \right.
 \end{align*}
as  modular fusion category, where $\sigma\in\Gal(\FQ(\zeta_5)/\FQ)$ and $\tau\in\Gal(\FQ(\zeta_7)/\FQ)$.
\end{theo}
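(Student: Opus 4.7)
The plan is to stratify modular fusion categories with $N(\dim(\C)) = p^2$ by the degree $d := d_{\dim(\C)} = [\FQ(\dim(\C)):\FQ]$, exploiting the fact that the preceding results of this section have already disposed of the cases $d = 2$ and $d > 2$ separately; the theorem then follows by gluing those statements together with the trivial case $d = 1$.

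First I would handle $d = 1$: then $\dim(\C)$ is a totally positive algebraic integer lying in $\FQ$, hence a positive integer, and the norm of a rational integer is the integer itself. Thus the hypothesis $N(\dim(\C)) = p^2$ forces $\dim(\C) = p^2$, giving the first alternative in the statement. For $d = 2$, Lemma \ref{degd=2} already proves $p = 5$ and identifies $\C$ (as a modular fusion category) with one of the two Deligne products $\C(\ssl_2,3)^\sigma_\ad \boxtimes \C(\ssl_2,3)^\sigma_\ad$ for $\sigma \in \Gal(\FQ(\zeta_5)/\FQ)$.

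For $d \geq 3$, I would appeal to Corollary \ref{P^2degd>2} (a direct specialization of Lemma \ref{P^mdegd>2} to $m = 2$), which gives that $\C$ is braided tensor equivalent to a Galois conjugate of the transitive modular fusion category $\C(\ssl_2, p-2)_\ad$. Combining this with the norm formula $N(\dim(\C(\ssl_2,p-2)_\ad)) = p^{(p-3)/2}$ then forces $(p-3)/2 = 2$, i.e.\ $p = 7$ and $\C \cong \C(\ssl_2,5)^\tau_\ad$, completing the last case. Since the substantive content (unit estimates in $\FQ(\sqrt{5})$ via the fundamental unit $\epsilon_5$, transitivity plus the classification of \cite{NgWaZh}, and the Schopieray divisibility criterion) is already encapsulated in the two cited results, no new obstacle appears at this stage; the only thing left to verify is the tautology that $\{d=1,\ d=2,\ d \geq 3\}$ exhausts the positive integers.
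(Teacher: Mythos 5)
Your proposal is correct and follows essentially the same route as the paper, which introduces the theorem with the words ``In summary, Corollary \ref{P^2degd>2} and Lemma \ref{degd=2} imply the following theorem'': the case $d_{\dim(\C)}=1$ trivially gives $\dim(\C)=p^2$, the case $d_{\dim(\C)}=2$ is Lemma \ref{degd=2}, and the case $d_{\dim(\C)}>2$ is Corollary \ref{P^2degd>2} (your re-derivation of $p=7$ from the norm $p^{(p-3)/2}$ is exactly how that corollary is obtained from Lemma \ref{P^mdegd>2}). No gaps.
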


Let $\C$ be a spherical fusion category of global dimension $p$, where $p$ is a prime. Then for any formal codegree $f$ of $\C$, it follows from \cite[Lemma 5.1]{Schopieray} that $f=p\cdot u_f$, where $u_f$ is an algebraic unit in the field $\FQ(\zeta_p)^+$ \cite[Proposition 5.2]{Schopieray}, thus $d_f:=[\FQ(f):\FQ]$ divides $\frac{p-1}{2}$.
\begin{prop}\label{dimpfcodeg}Assume that $\C$ is not pointed. Then $2< d_{\FPdim(\C)}<\frac{p-1}{2}$ if $p>5$.
\end{prop}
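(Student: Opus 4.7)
The plan is to establish both strict inequalities by contradiction, using the description $f=p\cdot u_f$ with $u_f$ a totally positive unit in $\FQ(\zeta_p)^+$ recorded just before the proposition, the Ostrik bound $f_\chi>\sqrt{4/3}$ from \cite[Theorem 4.2.1]{O3}, and the class equation \eqref{classeqt}.

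For the lower bound $d_{\FPdim(\C)}>2$: if $d_{\FPdim(\C)}=1$ then $\FPdim(\C)=p\cdot u$ with $u\in\FQ(\zeta_p)^+\cap\FQ=\FQ$ a totally positive algebraic unit, so $u=1$ and $\FPdim(\C)=p$; by the classical result that any fusion category of prime Frobenius--Perron dimension is pointed, this contradicts the non-pointed hypothesis. If $d_{\FPdim(\C)}=2$, then the unique quadratic subfield of $\FQ(\zeta_p)^+$ is $\FQ(\sqrt{p})$, forcing $p\equiv1\pmod 4$; classical theory for $p\equiv1\pmod 4$ gives $N(\epsilon_p)=-1$ for the fundamental unit $\epsilon_p$, so $\FPdim(\C)=p\,\epsilon_p^n$ for a positive even integer $n$ (positive because non-pointedness forces $\C$ to be non-pseudo-unitary, hence $u=\FPdim(\C)/p>1$). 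Applying $\sigma(\FPdim(\C))>\sqrt{4/3}$ to the non-trivial $\sigma\in\Gal(\FQ(\sqrt{p})/\FQ)$ yields $\epsilon_p^n<p\sqrt{3}/2$, and analyzing $\epsilon_p=(a+b\sqrt{p})/2$ via the Pell equation $a^2-pb^2=-4$ by cases on $b\ge2$ versus $b=1$, exactly as in the proof of Lemma \ref{degd=2}, pushes $p$ below an explicit threshold, leaving only the borderline prime $p=13$ to be eliminated individually.

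For the upper bound $d_{\FPdim(\C)}<\tfrac{p-1}{2}$: assuming equality, $\FPdim(\C)$ generates $\FQ(\zeta_p)^+$ over $\FQ$, so its full Galois orbit furnishes $\tfrac{p-1}{2}$ distinct one-dimensional formal codegrees of $\Gr(\C)\otimes_\Z\FC$, all of them different from the formal codegree $\dim(\C)=p$ because $\C$ is not pseudo-unitary. Rewriting \eqref{classeqt} in the form $\sum_\chi\chi(1)\,u_\chi^{-1}=p$ and grouping by Galois orbits, the trivial orbit contributes $1$ and the Galois orbit of $\FPdim(\C)$ contributes the positive rational integer $\text{Tr}_{\FQ(\zeta_p)^+/\FQ}\!\bigl(u_{\FPdim(\C)}^{-1}\bigr)$, which by AM--GM is at least $\tfrac{p-1}{2}$ with strict inequality because $u_{\FPdim(\C)}\ne1$. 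Invoking Siegel's trace theorem on totally positive algebraic integers \cite{Siegel} (used analogously in the pointedness argument for $(p-1)/2$ an odd prime mentioned in the introduction) sharpens this trace bound further, so that together with the unavoidable positive contributions from the remaining irreducible representations the sum must exceed $p$, violating \eqref{classeqt}.

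The main technical obstacle lies in two places. In the lower-bound argument, the borderline prime $p=13$, where $\epsilon_{13}^2\approx10.91$ is only just below $13\sqrt{3}/2\approx11.26$, likely requires a secondary rank-based or $d$-number test on $\FPdim(\C)=13\,\epsilon_{13}^2$ to rule it out. In the upper-bound argument, extracting a strict contradiction from the class equation needs the Siegel-type trace bound to be quantitatively sharp; the naive AM--GM estimate $\text{Tr}(u_{\FPdim(\C)}^{-1})\ge\tfrac{p-1}{2}$ alone is not enough, and a careful counting of the remaining Galois orbits (each of which must contribute an additional positive integer) is the delicate step.
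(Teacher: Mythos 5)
Your overall strategy does match the paper's: write every formal codegree as $p$ times a totally positive unit of $\FQ(\zeta_p)^+$, exclude $d_{\FPdim(\C)}=2$ by Ostrik's lower bound on the Galois conjugate of $\FPdim(\C)$ plus the fundamental-unit (Pell) analysis of Lemma \ref{degd=2}, and exclude $d_{\FPdim(\C)}=\frac{p-1}{2}$ by the class equation \eqref{classeqt} and Siegel's trace theorem. But both halves are left with genuine gaps which you flag but do not close, and the repairs you propose do not work. In the lower bound, with the weaker bound $\sqrt{4/3}$ the prime $p=13$ survives, and your suggested $d$-number/cyclotomic fallback cannot eliminate it: $13\,\epsilon_{13}^2=\frac{143+39\sqrt{13}}{2}$ has minimal polynomial $x^2-143x+169$, which passes the $d$-number test (since $169\mid 143^2$) and the cyclotomic test (it lies in $\FQ(\sqrt{13})\subset\FQ(\zeta_{13})$). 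The paper closes this case by using the rank-refined bound $\sigma(\FPdim(\C))>\sqrt{8/5}$ from \cite[Theorem 4.2.1]{O3} (together with \cite[Theorem 5.5]{Schopieray} for dimensions $7,11$); since $13\,\epsilon_{13}^{-2}\approx 1.19$ lies between $\sqrt{4/3}\approx1.15$ and $\sqrt{8/5}\approx1.26$, the sharper constant is exactly what kills the borderline prime, so your ``rank-based'' instinct is right but must actually be carried out.

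In the upper bound your bookkeeping of \eqref{classeqt} is wrong at the decisive point, and, as you concede, the argument as written yields no contradiction: indeed $1+\frac{3(p-1)}{4}\leq p$ for every $p$, so ``Siegel plus unavoidable positive contributions'' cannot push the sum past $p$. The two facts the paper uses and you omit are: (i) when $d_{\FPdim(\C)}=\frac{p-1}{2}$, \cite[Corollary 2.15]{O2} gives $\FQ(\dim(X),X\in\Q(\C))=\FQ(\zeta_p)^+$, so $\dim(-)$ is not a single term of \eqref{classeqt} but has a full Galois orbit of $\frac{p-1}{2}$ one-dimensional representations, each of formal codegree $p$, contributing $\frac{p-1}{2p}$ in total rather than your ``$1$'' (i.e.\ $\frac1p$); and (ii) non-pointedness forces $\rank(\C)\leq p-1$ by \cite[Remark 4.2.3]{O3}, so these conjugates of $\dim(-)$ together with the $\frac{p-1}{2}$ conjugates of $\FPdim(-)$ exhaust all irreducible representations of $\Gr(\C)\otimes_\Z\FC$. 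With (i) and (ii), \eqref{classeqt} becomes the exact identity $\sum_{\sigma}\sigma(u_{\FPdim(\C)})^{-1}=\frac{p+1}{2}$ (even the inequality $\leq\frac{p+1}{2}$, which needs only (i), suffices), while Siegel gives $\sum_{\sigma}\sigma(u_{\FPdim(\C)})^{-1}>\frac{3(p-1)}{4}$, hence $p<5$, a contradiction. The ``careful counting of the remaining Galois orbits'' you defer is therefore not a technical refinement but the actual content of the proof.
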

\begin{proof}Spherical fusion categories of integer dimensions less or equal to $5$ are classified in \cite[Example 5.2.2]{O3}, so we assume $p>5$ below. If $d_{\FPdim(\C)}=2$, then $u_{\FPdim(\C)}=\epsilon_p^n$ \cite[Theorem 11.5.1]{AW}, where $n\neq0$ is an even integer, as $\C$ is not pointed and  $u_{\FPdim(\C)}$ is a totally positive algebraic unit. Thus $\FPdim(\C)=p\epsilon_p^n$, while $\FPdim(\C)>\sqrt{\frac{8}{5}}$ and $\sigma(\FPdim(\C))>\sqrt{\frac{8}{5}}$ \cite[Theorem 4.2.1]{O3}, where $\langle\sigma\rangle=\Gal(\FQ(\epsilon_p)/\FQ)$. By using the same argument of Lemma \ref{degd=2}, we see $p\leq 11$. However, spherical fusion categories of global dimensions $7,11$ are pointed \cite[Theorem 5.5]{Schopieray}, it is a contradiction. Hence $d_{\FPdim(\C)}>2$ if $p>5$.

If $d_{\FPdim(\C)}=\frac{p-1}{2}$, then \cite[Corollary 2.15]{O2} says
\begin{align*}
\FQ(\FPdim(\C))=\FQ(\zeta_p)^+=\FQ(\dim(X),X\in\Q(\C)).
 \end{align*} That is, each of the homomorphisms $\dim(-)$ and $\FPdim(-)$ has $\frac{p-1}{2}$ Galois conjugates. Since $\C$ is not a pointed fusion category, $\rank(\C)<p$ by
\cite[Remark 4.2.3]{O3}, so $\rank(\C)=p-1$ and the Grothendieck ring $\text{Gr}(\C)$ is commutative. Note that
\begin{align*}
\sum_{\sigma\in\Gal(\FQ(\zeta_p)^+/\FQ)}\frac{1}{\sigma(\dim(\C))}+\frac{1}{\sigma(\FPdim(\C))}=1,
\end{align*}
by \cite[Proposition 2.10]{O1}, then we have
\begin{align*}
\sum_{\sigma\in\Gal(\FQ(\zeta_p)^+/\FQ)}\frac{1}{\sigma(u_{\FPdim(\C)})}=\frac{p+1}{2}.
\end{align*}
While  $\frac{1}{u_{\FPdim(\C)}}\neq1$ is a totally positive  algebraic unit and $\FQ(\zeta_p)^+\neq\FQ(\sqrt{5})$,  the Siegel's trace theorem \cite[Theorem III]{Siegel} shows
 \begin{align*}
\sum_{\sigma\in\Gal(\FQ(\zeta_p)^+/\FQ)}\frac{1}{\sigma(u_{\FPdim(\C)})}
=\text{tr}(\frac{1}{u_{\FPdim(\C)}})>\frac{p-1}{2}\cdot\frac{3}{2}.
\end{align*}
Thus we obtain $\frac{p+1}{2}>\frac{p-1}{2}\cdot\frac{3}{2}$, which then implies  $p<5$, it is impossible.
\end{proof}
\begin{rema}
In fact, let $f_\chi$ be an arbitrary  formal codegree of $\C$, where $\C$ is a spherical fusion category of global dimension $p$ with $p\neq5$. If $f_\chi\notin\Z$, then the method of the above proposition  says that $2<[\FQ(f_\chi):\FQ]<\frac{p-1}{2}$.
\end{rema}
\subsection{Modular fusion category $\C$ with $N(\dim(\C))=p^3$}
Let  $\C$  be a modular fusion category such that $N(\dim(\C))=p^3$ and $\dim(\C)\notin\Z$. Then $d_{\dim(\C)}\leq6$ by
\cite[Proposition 4.11]{Schopieray}, moreover Lemma \ref{P^mdegd>2} says $d_{\dim(\C)}=2,3$. Hence, we know either $4$ or $6$ divides $p-1$. In particular,  there does  not exist modular fusion categories such that $N(\dim(\C))=p^3$ and $d_{\dim(\C)}>1$ when $(p-1,6)=2$. The inequality (\ref{NormrankIneq}) implies $\rank(\C)\leq p-1$, and we know there does not exist such  modular fusion category when $p=2,3$ \cite{O2}, so we assume $p\geq 5$ below. Moreover,
\begin{lemm}
Let $\C$ be a modular fusion category such that $N(\dim(\C))=p^3$ and $d_{\dim(\C)}>1$. Then $\C$ is simple when $p>5$.
\end{lemm}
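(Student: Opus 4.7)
The plan is to suppose $\C$ contains a nontrivial proper fusion subcategory $\D$ and derive a contradiction. Set $L=\FQ(\dim(\C))$, so $[L:\FQ]=d\in\{2,3\}$ by Lemma \ref{P^mdegd>2} and $N_L(\dim(\C))=p^3$. The main tools are M\"uger's theorem $\dim(\D)\cdot\dim(\D'_\C)=\dim(\C)$, the classification of modular fusion categories of norm $p$ \cite{Schopieray} and of norm $p^2$ (Theorem \ref{normcases}), together with the Ostrik bound $\sigma(\dim(\C'))>4\sqrt{3}/5$ from \cite[Proposition A.1.1]{O3}.

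First, any integer-dimensional fusion subcategory $\E\subseteq\C$ satisfies $\dim(\E)^d\mid p^3$ in $\Z$, so $\dim(\E)\in\{1,p\}$. In particular, $\C$ admits no Tannakian subcategory equivalent to $\text{Rep}(\Z_p)$: its regular \'etale algebra would produce a modular category $\C_A^0$ of dimension $\dim(\C)/p^2$, requiring $p^2\mid\dim(\C)$ in $O_L$ and hence $N_L(p^2)=p^{2d}\leq p^3$, which is impossible for $d\geq 2$. Applying this to the M\"uger center $\D\cap\D'_\C$, which is symmetric and hence Tannakian since super-Tannakian categories of odd dimension $p$ do not exist, we obtain $\D\cap\D'_\C=\vvec$, so $\D$ is modular and $\C\cong\D\boxtimes\D'_\C$.

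Next, if $\dim(\D)\in\Z$ then $\dim(\D)=p$; by \cite{Schopieray} this forces $\D$ to be pointed, and since the Tannakian case is excluded above, $\D\cong\C(\Z_p,q)$ is pointed modular. Then $\dim(\D'_\C)=\dim(\C)/p\in L$ has $N_L(\dim(\D'_\C))=p^{3-d}$. For $d=3$ this norm is $1$, so $\dim(\D'_\C)$ is a totally positive algebraic unit; the bound $\sigma(\dim(\D'_\C))>4\sqrt{3}/5>1$ (the Yang--Lee exception is precluded by $p>5$) forces $\dim(\D'_\C)=1$ and $\C=\D$ pointed, contradicting $d>1$. For $d=2$, $N(\dim(\D'_\C))=p$ with $\dim(\D'_\C)\notin\Z$ identifies $\D'_\C$ as a Galois conjugate of $\C(\ssl_2,3)_\ad$ by \cite{Schopieray}, forcing $p=5$, a contradiction.

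Finally, when both $\dim(\D)$ and $\dim(\D'_\C)$ are non-integral I would first argue that both lie in $L$: for $\tau\in\Gal(\overline{\FQ}/L)$, the identity $\tau(\dim(\D))\tau(\dim(\D'_\C))=\dim(\D)\dim(\D'_\C)$ together with the Galois symmetry of modular data \cite{DongLNg} and the uniqueness of the Deligne decomposition of $\C$ into modular factors forces $\tau$ to fix each factor separately. Then $N_L(\dim(\D))\cdot N_L(\dim(\D'_\C))=p^3$ leaves only the unordered factorization $\{p,p^2\}$: a norm-$p$ factor with non-integral dimension forces $p=5$ by \cite{Schopieray}, excluded; a norm-$p^2$ factor with non-integral dimension forces $p\in\{5,7\}$ by Theorem \ref{normcases}, and for $p=7$ the companion factor has non-integral norm $p=7$, again forcing $p=5$. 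The main obstacle is this Galois-descent step showing each factor's dimension descends to $L$ rather than merely to $\FQ_\C$; I expect to handle it by combining the rigidity of the Deligne decomposition under the Galois action on modular data with the $\sigma^2$-symmetry relating $\hat\sigma$-permutations of simple objects to the Galois action on normalized ribbon twists.
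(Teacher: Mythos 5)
Your first two steps (excluding a Tannakian $\text{Rep}(\Z_p)$, hence showing the M\"uger center of any nontrivial subcategory is trivial so that $\C\cong\D\boxtimes\D_\C'$, and disposing of an integer-dimensional factor) are fine, and are in fact more careful than the paper, which simply starts from the decomposition $\C\cong\D\boxtimes\D_\C'$. The problem is the case on which everything hinges: both factors non-integral. Your norm identity $N_L(\dim(\D))\cdot N_L(\dim(\D_\C'))=p^3$ requires $\dim(\D),\dim(\D_\C')\in L=\FQ(\dim(\C))$, and this descent is exactly what you have not proved. The justification you sketch does not work as stated: an element $\tau\in\Gal(\overline{\FQ}/L)$ acts on modular data and permutes simple objects (and formal codegrees), but it does not act on $\C$ in a way that canonically preserves the two Deligne factors, and there is no a priori reason that $\FQ(\dim(\D))\subseteq L$; the dimensions of the factors only need to lie in $\FQ(\zeta_{p^s})^+$, whose degree is much larger than $d\leq3$. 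Worse, the decomposition you most need to exclude is precisely the one that escapes your case analysis without descent: $\D$ and $\D_\C'$ both Galois conjugates of the transitive category $\C(\ssl_2,p-2)_\ad$, whose dimensions have degree $\frac{p-1}{2}$ and norm $p^{\frac{p-3}{2}}$ over their own field, so neither lies in $L$ and neither has norm $p$ or $p^2$; your ``factorization $\{p,p^2\}$'' never sees this configuration, yet ruling it out is the actual content of the lemma. (A smaller omission: you should also dismiss the factorization $\{1,p^3\}$ via the unit argument, though that is routine.)

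The paper's proof avoids descent altogether. Since $d\geq2$, $p^2$ cannot divide $\dim(\C)$ (else $p^{2d}\mid p^3$), so $p$ cannot divide both $\dim(\D)$ and $\dim(\D_\C')$; the factor with dimension coprime to $p$ has no rational prime dividing its dimension, hence all its formal codegrees are Galois conjugates of its dimension by \cite[Theorem 4.4]{Schopieray}, it is transitive, and by \cite[Theorem 1.1]{NgWaZh} it is a Galois conjugate of $\C(\ssl_2,p-2)_\ad$ with $N(\dim)=p^{\frac{p-3}{2}}$. Now all relevant dimensions lie in $\FQ(\zeta_p)^+$, and comparing norms over that fixed field gives $N_{\FQ(\zeta_p)^+}(\dim(\D_\C'))=p^{\frac{p+3}{4}}$ (if $d=2$) or $p$ (if $d=3$); in either case $p\nmid\dim(\D_\C')$ for $p>5$, so the companion is also transitive, and equating its $\FQ(\zeta_p)^+$-norm $p^{\frac{p-3}{2}}$ with $p^{\frac{p+3}{4}}$ or $p$ forces $p=9$ or $p=5$, a contradiction. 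If you want to salvage your route, you would have to replace the norm computation over $L$ by one over a common field such as $\FQ(\zeta_p)^+$ and then identify at least one factor first, which collapses back to the paper's argument; as written, the proposal has a genuine gap at its central step.
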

\begin{proof}Assume  $\C\cong\D\boxtimes\D_\C'$ where $\D$ is a modular fusion subcategory of $\C$, then the proof of  Lemma \ref{P^mdegd>2} shows that either $\D$ or $\D_\C'$ is transitive, as $p$ can not divide both $\dim(\D)$ and $\dim(\D_\C')$. Without loss of generality, assume that $\D$ is braided equivalent to a Galois conjugate of $\C(\ssl_2,p-2)_\text{ad}$,   then $N(\dim(\D))=p^{\frac{p-3}{2}}$ and
\begin{align*}
N_{\FQ(\zeta_p)^+}(\dim(\C))=\left\{
                               \begin{array}{ll}
                                 p^{\frac{3(p-1)}{4}}, & \hbox{if $d_{\dim(\C)}=2$;} \\
                                 p^{\frac{(p-1)}{2}}, & \hbox{if $d_{\dim(\C)}=3$.}
                               \end{array}
                             \right.
 \end{align*}
 thus we obtain $N_{\FQ(\zeta_p)^+}(\dim(\D_\C'))=p^{\frac{(p+3)}{4}}$ if $d_{\dim(\C)}=2$, or $p$ if $d_{\dim(\C)}=3$. Obviously, in each case $p$ can't divide $\dim(\D_\C')$ by \cite[Proposition 4.2]{Schopieray}, so $\D_\C'$ is also transitive by \cite[Theorem 4.4]{Schopieray}. However, there  is a contradiction when $p>5$.
\end{proof}

\begin{lemm}\label{norm=5^3lemm}Let $\C$ be a modular fusion category  such  that  $N(\dim(\C))=5^3$ and $d_{\dim(\C)}\neq 1$.
Then $\dim(\C)=\frac{25\pm5\sqrt{5}}{2}$ or $25\pm10\sqrt{5}$, and $\FPdim(\C)=25+10\sqrt{5}$ or $\frac{25+5\sqrt{5}}{2}$.
\end{lemm}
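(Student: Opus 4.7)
The plan is to first reduce $\FQ(\dim(\C))$ to the unique quadratic subfield $\FQ(\sqrt 5)$ of the relevant cyclotomic tower, then parametrize all totally positive algebraic integers of norm $125$ in $\FQ(\sqrt 5)$ via the unit group, and finally apply Ostrik's absolute lower bound to eliminate all but finitely many cases. First I would invoke Lemma \ref{P^mdegd>2} with $p=5$, $m=3$: if $d_{\dim(\C)}>3$, then $\C$ would be a Galois conjugate of $\C(\ssl_2,3)_\ad$, whose global dimension has norm $5$, contradicting $N(\dim(\C))=125$. So $d_{\dim(\C)}\le 3$. By the Cauchy theorem \cite[Theorem 3.9]{BNRW1}, $\ord(T_\C)=5^n$ and $\dim(\C)\in\FQ(\zeta_{5^n})^+$, whose Galois group over $\FQ$ is cyclic of order $2\cdot 5^{n-1}$ and so admits no subfield of degree $3$; this rules out $d_{\dim(\C)}=3$. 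Combined with $d_{\dim(\C)}\ne 1$, we conclude $d_{\dim(\C)}=2$, and $\FQ(\dim(\C))$ is the unique quadratic subfield $\FQ(\sqrt 5)$.

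Next, in the PID $\Z[\epsilon_5]$ the rational prime $5$ ramifies as $(5)=(\sqrt 5)^2$, so the principal ideal $(\dim(\C))$ of norm $125$ must equal $(\sqrt 5)^3=(5\sqrt 5)$, giving $\dim(\C)=5\sqrt 5\cdot v$ for some unit $v\in\{\pm\epsilon_5^n\}$. Since $5\sqrt 5$ has opposite signs under the two real embeddings of $\FQ(\sqrt 5)$, total positivity of $\dim(\C)$ forces $v$ to be a positive odd power of $\epsilon_5$, so $\dim(\C)=5\sqrt 5\,\epsilon_5^{2k+1}$ for some $k\in\Z$ and $\sigma(\dim(\C))=5\sqrt 5\,\epsilon_5^{-(2k+1)}$, where $\sigma$ is the nontrivial element of $\Gal(\FQ(\sqrt 5)/\FQ)$. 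Since $\C^\sigma$ cannot be a Galois conjugate of $\C(\ssl_2,3)_\ad$ (norms differ: $5\ne 125$), Ostrik's bound $\sigma(\dim(\C))>4\sqrt 3/5$ applies to both embeddings, which amounts to $5\sqrt 5\,\epsilon_5^{-|2k+1|}>4\sqrt 3/5$. Direct computation yields
\begin{align*}
5\sqrt 5\,\epsilon_5^{\pm 1}=\tfrac{25\pm 5\sqrt 5}{2},\quad 5\sqrt 5\,\epsilon_5^{\pm 3}=25\pm 10\sqrt 5,\quad 5\sqrt 5\,\epsilon_5^{\pm 5}=\tfrac{125\pm 55\sqrt 5}{2},
\end{align*}
and the last value $\tfrac{125-55\sqrt 5}{2}\approx 1.01$ lies below $4\sqrt 3/5\approx 1.39$. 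Hence $|2k+1|\le 3$, i.e.\ $k\in\{-2,-1,0,1\}$, yielding exactly the four claimed values of $\dim(\C)$.

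Finally, by \cite[Proposition 4.2]{Schopieray}, $N(\FPdim(\C))=N(\dim(\C))=125$, and $\FPdim(\C)\in\FQ(\dim(\C))=\FQ(\sqrt 5)$, so $\FPdim(\C)$ must also be one of the four values above. Because $\FPdim(\C^\sigma)=\FPdim(\C)\ge\dim(\C^\sigma)=\sigma(\dim(\C))$ for every embedding $\sigma$, $\FPdim(\C)$ is the maximal element of its Galois orbit, leaving the two possibilities $\FPdim(\C)\in\{\tfrac{25+5\sqrt 5}{2},\,25+10\sqrt 5\}$ as stated. The main technical nuisance is the boundary case $|2k+1|=5$, where $\tfrac{125-55\sqrt 5}{2}$ just slips below $4\sqrt 3/5$; since the margin is narrow, one must verify that inequality exactly (by squaring and clearing denominators) rather than rely on decimal approximations.
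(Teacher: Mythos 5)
Your derivation of the four possible values of $\dim(\C)$ is correct and follows essentially the paper's own route: reduce to $\FQ(\dim(\C))=\FQ(\sqrt{5})$ via Lemma \ref{P^mdegd>2} and the Cauchy theorem, write $\dim(\C)=5\sqrt{5}\,\epsilon_5^{2k+1}$, and cut the exponent down with the bound $\sigma(\dim(\C))>4\sqrt{3}/5$ of \cite[Proposition A.1.1]{O3}; your ideal-theoretic justification that $\dim(\C)/(5\sqrt{5})$ is a unit is, if anything, more complete than the paper's terse norm computation.

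The genuine gap is in the $\FPdim(\C)$ half. First, \cite[Proposition 4.2]{Schopieray} does not give $N(\FPdim(\C))=N(\dim(\C))$; it only controls the prime divisors (equivalently, divisibility) of norms, so from $\dim(\C)/\FPdim(\C)$ being a totally positive algebraic integer one only gets that $N(\FPdim(\C))$ is a power of $5$ at most $125$. The paper, at exactly this point, concludes only $N(\FPdim(\C))\in\{25,125\}$ and must eliminate $25$ by a separate argument: $N(\FPdim(\C))=25$ would force $\FPdim(\C)=5\epsilon_5^2=\frac{15+5\sqrt{5}}{2}$ and hence $\C$ braided equivalent to a product of two Fibonacci categories by \cite[Example 5.1.2]{O3}, whose global dimension has norm $25\neq125$; you have skipped this case entirely. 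Second, you assert $\FPdim(\C)\in\FQ(\sqrt{5})$ without proof: a priori $\FPdim(\C)=\dim(\C)/\dim(X)^2$ for some simple $X$ lies only in $\FQ(\zeta_{5^n})^+$, which may have degree $2\cdot5^{n-1}>2$. The paper closes this with a dedicated step showing the minimal such $n$ is $1$, by counting formal codegrees against equation (\ref{classeqt}): if $n>1$, the Galois orbit of the unit object alone produces at least five formal codegrees equal to each conjugate of $\dim(\C)$, and $5\bigl(1/\tfrac{25+5\sqrt{5}}{2}+1/\tfrac{25-5\sqrt{5}}{2}\bigr)=1$ while $5\bigl(1/(25+10\sqrt{5})+1/(25-10\sqrt{5})\bigr)=2$, a contradiction either way. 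Finally, even granting $N(\FPdim(\C))=125$ and $\FPdim(\C)\in\FQ(\sqrt{5})$, your inequality $\FPdim(\C)\geq\sigma(\dim(\C))$ only bounds $\FPdim(\C)$ from below; to exclude $\FPdim(\C)=5\sqrt{5}\,\epsilon_5^{5}$ and higher powers you need the formal-codegree bound of \cite[Theorem 4.2.1]{O3} applied to $\sigma(\FPdim(\C))$ (or the paper's remark that $\FPdim(\C)$ is the largest formal codegree), since the $4\sqrt{3}/5$ bound you invoke applies to global dimensions of Galois twists, not to conjugates of $\FPdim(\C)$.
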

\begin{proof}It is easy to see  $\FQ(\sqrt{5})=\FQ(\dim(\C))$. Since $d_{\dim(\C)}=2$ and $N\left(\frac{\dim(\C)}{5\sqrt{5}}\right)=-1$, $\dim(\C)=5\sqrt{5}\epsilon^m_5$ with an odd integer $m$, where $\epsilon_5=\frac{\sqrt{5}+1}{2}$ is the fundamental algebraic unit of $\FQ(\sqrt{5})$. By \cite[Proposition A.1.1]{O3}, $\sigma(\dim(\C))>\frac{4\sqrt{3}}{5}$ and $\dim(\C)>\frac{4\sqrt{3}}{5}$, where $\sigma(\sqrt{5})=-\sqrt{5}$. Hence, if $m\leq-1$, then $\dim(\C)=5\sqrt{5}(\frac{\sqrt{5}-1}{2})^{-m}>\frac{4\sqrt{3}}{5}$, i.e., $(\frac{\sqrt{5}-1}{2})^{-m}>\frac{4\sqrt{3}}{25\sqrt{5}}$, thus $m=-1,-3$; if $m>0$, then $\sigma(\dim(\C))=-5\sqrt{5}\sigma(\epsilon^m_5)=5\sqrt{5}\epsilon^{-m}_5>\frac{4\sqrt{3}}{5}$, so $m=1,3$. In summary, $m=\pm1,\pm3$.

Assume that $\dim(-)$ takes value in a  subfield $\mathbb{F}$ of the totally real $\FQ(\zeta_{5^n})^+$   for some positive integer $n$ with $n$ being minimal. If $n>1$, then $[\mathbb{F}:\FQ]\geq10$ as $\FQ(\sqrt{5})\subseteq\mathbb{F}$, for any $\sigma\in\text{Gal}(\mathbb{F}/\FQ)$, there exists a unique simple object $\hat{\sigma}(I)$ such that \begin{align*}\sigma(\dim(\C))=\frac{\dim(\C)}{\dim(\hat{\sigma}(I))^2}, \end{align*}
while $\sigma(\dim(\C))\in{\{\frac{25\pm5\sqrt{5}}{2}}\}$ or ${\{25\pm10\sqrt{5}}\}$, then $\C$ has at least $5$ formal codegrees equal to $\frac{25+5\sqrt{5}}{2}$ and $\frac{25-5\sqrt{5}}{2}$ if $\dim(\C)=\frac{25\pm5\sqrt{5}}{2}$, however $5(1/\frac{25+5\sqrt{5}}{2}+1/\frac{25-5\sqrt{5}}{2})=1$, so these are all formal codegrees of $\C$, it is a contradiction as $n=1$ in this case; if $\dim(\C)=\frac{25\pm5\sqrt{5}}{2}$, then $5(1/(25+10\sqrt{5})+1/(25-10\sqrt{5}))=2$, impossible. Thus, $n=1$.
Since $\frac{\dim(\C)}{\text{FPdim}(\C)}$ is a totally positive algebraic integer that is less than or equal to $1$ \cite[Proposition 9.4.2]{EGNO},  we see $N(\text{FPdim}(\C))=25,125$ by Lemma \ref{P^mdegd>2} and \cite[Proposition 4.2]{Schopieray}.

If $N(\FPdim(\C))=25$, then $\FPdim(\C)=5\epsilon_5^2=\frac{15+5\sqrt{5}}{2}$ and $\C$ is braided equivalent to a Deligne product of two Fibonacci fusion categories by \cite[Example 5.1.2]{O3}, however, in this case $N(\dim(\C))\neq 125$. Therefore, $N(\text{FPdim}(\C))=125$, then $\text{FPdim}(\C)=\frac{25+5\sqrt{5}}{2}$ or $25+10\sqrt{5}$, as $\FPdim(\C)$ is the largest among the set of formal codegrees of $\C$.
\end{proof}
\begin{prop}\label{norm=5^3}Let $\C$ be a modular fusion category such that $N(\dim(\C))=5^3$ and $d_{\dim(\C)}\neq1$. Then as  modular fusion category,
\begin{align*}
\C\cong\C(\ssl_2,3)^{\sigma_1}_\text{ad}\boxtimes\C(\ssl_2,3)^{\sigma_2}_\text{ad}
\boxtimes\C(\ssl_2,3)^{\sigma_3}_\text{ad}  ~\text{or}~ \C\cong\C(\Z_5,\eta)\boxtimes\C(\ssl_2,3)^{\sigma_4}_\text{ad},
\end{align*}
 where $\sigma_i\in\text{Gal}(\FQ(\zeta_5)/\FQ)$ for all $1\leq i\leq4$.
 \end{prop}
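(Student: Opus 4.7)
My plan is to combine the rank bound $\rank(\C)\leq 11$ from (\ref{NormrankIneq}) with Lemma \ref{norm=5^3lemm}, split into two cases according to $\FPdim(\C)$, and in each case exhibit a modular fusion subcategory so that M\"{u}ger's decomposition theorem \cite{Mu1} yields the Deligne factorization in the statement.

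\textbf{Case I} ($\FPdim(\C)=\frac{25+5\sqrt{5}}{2}=5\cdot\frac{5+\sqrt{5}}{2}$). Since $\FPdim(\C_\pt)$ is a positive integer dividing $\FPdim(\C)$ in $\mathbb{Z}[\sqrt{5}]$, it lies in $\{1,5\}$. I would rule out $\FPdim(\C_\pt)=1$ by showing the formal codegrees of such a non-pointed $\C$ are incompatible with the $d$-number/cyclotomic tests of \cite[Theorem 1.2]{O1} and \cite[Corollary 8.53]{ENO1}, combined with the rank bound. Hence $\C_\pt\cong\C(\Z_5,\eta)$. The Tannakian alternative $\C_\pt\simeq\text{Rep}(\Z_5)$ is excluded because $\text{Rep}(\Z_5)$ would then lie in the M\"{u}ger center of $\C$, contradicting modularity. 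So $\C_\pt$ is a modular fusion subcategory and $\C\cong\C_\pt\boxtimes(\C_\pt)'$ via \cite{Mu1}. The centralizer $(\C_\pt)'$ is non-pointed with $\FPdim=\frac{5+\sqrt{5}}{2}$, and \cite[Example 5.1.2(v)]{O3} identifies it as a Galois conjugate of $\C(\ssl_2,3)_\ad$.

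\textbf{Case II} ($\FPdim(\C)=25+10\sqrt{5}=(\frac{5+\sqrt{5}}{2})^3$). Again $\FPdim(\C_\pt)\in\{1,5\}$; however $\FPdim(\C_\pt)=5$ would force a modular centralizer of $\FPdim=5+2\sqrt{5}$ of norm $5$, which is not an achievable modular FPdim since by \cite[Theorem 4.4]{Schopieray} the only non-integer modular FPdim of norm $5$ is $\frac{5\pm\sqrt{5}}{2}$. Hence $\C$ has trivial pointed part. The crucial step is to exhibit a Fibonacci fusion subcategory $\D\subseteq\C$: using the rank bound, the Verlinde formula (\ref{Verlinde}), and the constraint that $\dim(X)\in\FQ(\zeta_5)$ for each simple $X$, one locates a simple object $X$ with $\dim(X)=\frac{\pm 1+\sqrt{5}}{2}$ generating a rank-$2$ Fibonacci subcategory $\D$. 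Modularity of $\D$ follows from the $\hxt$-spectrum factorization of $\rho_\C$ into irreducible congruence representations of level $5$. Then \cite{Mu1} yields $\C\cong\D\boxtimes\D'$ with $N(\dim(\D'))=5^2$, and Theorem \ref{normcases} identifies the non-pointed $\D'$ as a product of two Fibonacci Galois conjugates, completing the triple-Fibonacci decomposition.

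The main obstacle is Case II: producing the Fibonacci subcategory $\D$ without any invertible simple to bootstrap from. This requires delicate use of the congruence representation structure of $\rho_\C$, relying on the classification of irreducible representations of $\SL(2,\Z_{5^n})$ from \cite{Nob,NobWol} and the $\hxt$-spectrum data from \cite[Appendix]{PSYZ}, together with the formal codegree equation (\ref{classeqt}) and the rank bound. A routine secondary step is to match $\dim(\C)$ to one of the four values in Lemma \ref{norm=5^3lemm} in order to pin down the specific Galois twists $\sigma_i$ appearing in the final Deligne product.
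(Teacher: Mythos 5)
Your overall skeleton (split into the two cases for $\FPdim(\C)$ given by Lemma \ref{norm=5^3lemm}, factor off a pointed or Fibonacci piece, finish with \cite[Theorem 3.13]{DrGNO2} together with Lemma \ref{degd=2} or Theorem \ref{normcases}) is the same as the paper's, but the two pivotal steps are not actually carried out, and one justification is wrong. In Case II you yourself flag the existence of a Fibonacci subcategory as the main obstacle and only gesture at the representation theory of $\SL(2,\Z_{5^n})$ and $\hxt$-spectra; no argument is given, so the core of the proof is missing. The paper proves exactly this point by elementary formal-codegree bookkeeping: after reducing by a Galois twist to $\dim(\C)=\frac{25+5\sqrt{5}}{2}$, equation (\ref{classeqt}) forces every further formal codegree to have norm $5^3$, which pins down $\rank(\C)=8$ together with the multiset of squared quantum dimensions, and then writing $\FPdim(\C)=25+10\sqrt{5}$ as a sum of eight squares of Frobenius--Perron dimensions in $\FQ(\sqrt{5})$ produces a simple object of FP-dimension $\frac{1+\sqrt{5}}{2}$; only then do \cite[Theorem 3.13]{DrGNO2} and Lemma \ref{degd=2} finish (the subcase $\dim(\C)=25\pm10\sqrt{5}$ is handled by the unit-orbit argument described below). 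Incidentally, your concern about modularity of the Fibonacci piece is vacuous: a braided Fibonacci category is automatically non-degenerate, since its only proper fusion subcategory is trivial and it cannot be symmetric (its dimensions are not integers), so no $\hxt$-spectrum input is needed there.

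In Case I the step ``rule out $\FPdim(\C_\pt)=1$ via the $d$-number and cyclotomic tests'' has no content: those tests constrain individual formal codegrees, and the relevant codegrees (Galois conjugates of $\dim(\C)$ and $\FPdim(\C)$, of norm $5^3$) pass both tests, so it is unclear how they could detect the presence or absence of invertible objects. In the paper the nontriviality of $\C_\pt$ in this case is an output, not an input: the Galois conjugate $X=\hat\sigma(I)$ of the unit has FP-dimension $\frac{1+\sqrt{5}}{2}$, and $X\otimes X=I\oplus Z$ gives either $Z\cong X$, i.e.\ a Fibonacci subcategory whose centralizer has FP-dimension $5$ and is therefore pointed, or $Z\ncong X$, in which case $X\otimes Z$ contains a nontrivial invertible object. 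Finally, your exclusion of the Tannakian alternative $\C_\pt\simeq\text{Rep}(\Z_5)$ rests on a false principle: a Tannakian subcategory of a modular category need not lie in the M\"{u}ger center (already $\text{Rep}(\Z_5)\subseteq\Y(\vvec_{\Z_5})$ shows this). The correct exclusion is, for instance, a de-equivariantization count: a Tannakian $\text{Rep}(\Z_5)\subseteq\C$ would yield a modular category of local modules of global dimension $\dim(\C)/25<1$ (here $\dim(\C)=\frac{25\pm5\sqrt{5}}{2}<25$, since Galois conjugates of $\dim(\C)$ are formal codegrees bounded by $\FPdim(\C)$), which is impossible.
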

 \begin{proof}
By Lemma \ref{norm=5^3lemm}, we know $\FPdim(\C)=25+10\sqrt{5}$ or $\frac{25+5\sqrt{5}}{2}$.

If $\text{FPdim}(\C)=\frac{25+5\sqrt{5}}{2}$, then $\dim(\C)=\frac{25\pm5\sqrt{5}}{2}$, so $\C$ is conjugated to a pseudo-unitary fusion category. Notice that  $\C$ contains a self-dual simple object $X=\hat{\sigma}(I)$ of FP-dimension $\frac{1+\sqrt{5}}{2}$,   $X\otimes X=I\oplus Z$ where  $Z$ is also a self-dual simple object of FP-dimension $\frac{1+\sqrt{5}}{2}$. If $Z=X$, then $\C$ contains $\C(\ssl_2,3)_\ad$ as a fusion subcategory, hence $\C\cong\C(\Z_5,\eta)\boxtimes\C(\ssl_2,3)_\text{ad}$; if not, $X\otimes Z=X\oplus g$, where $g$ is a non-trivial invertible object, so $\C_\text{pt}$ is non-trivial. While $\FPdim(\C_\text{pt})$ divides $\FPdim(\C)$ by \cite[Proposition 8.15]{ENO1}, which  implies  $\FPdim(\C_\text{pt})=5$ and $\C_\text{pt}\cong\C(\Z_5,\eta)$. By \cite[Theorem 3.13]{DrGNO2}, we have
\begin{align*}
&\C\cong\C(\Z_5,\eta)\boxtimes\C(\ssl_2,3)_\text{ad} ~\text{if}\dim(\C)=\frac{25+5\sqrt{5}}{2},\\
&\C\cong\C(\Z_5,\eta)\boxtimes\C(\ssl_2,3)^\sigma_\text{ad} ~\text{if}\dim(\C)=\frac{25-5\sqrt{5}}{2}.
\end{align*}

If $\text{FPdim}(\C)=25+10\sqrt{5}$, then $\C_\text{pt}=\vvec$. In fact, if $\C_\text{pt}\neq\vvec$, then $\C_\pt\cong\C(\Z_5,\eta)$ and  $\C\cong\C(\Z_5,\eta)\boxtimes\D$ with $\dim(\D)=5+2\sqrt{5}$. However, note that $5-2\sqrt{5}$ is a formal codegree of $\D$, which contradicts to \cite[Theorem 1.1.2]{O3}.
If $\dim(\C)=25\pm10\sqrt{5}$, by using the same argument as above, we know $\C$ contains a Galois conjugate of $\C(\ssl_2,3)_\text{ad}$  as a modular fusion subcategory. Consequently, $\C\cong\C(\ssl_2,3)_\text{ad}^\sigma\boxtimes\D$  with $\D$ being a modular fusion subcategory and $N(\dim(\D))=5^2$, where $\sigma\in\Gal(\FQ(\zeta_5)/\FQ)$, and Lemma \ref{degd=2} implies \begin{align*}
&\C\cong\C(\ssl_2,3)_\text{ad}\boxtimes\C(\ssl_2,3)_\text{ad}\boxtimes\C(\ssl_2,3)_\text{ad} ~\text{if}   \dim(\C)=25+10\sqrt{5},\\
&\C\cong\C(\ssl_2,3)^\sigma_\text{ad}\boxtimes\C(\ssl_2,3)^\sigma_\text{ad}
\boxtimes\C(\ssl_2,3)^\sigma_\text{ad}~\text{if}   \dim(\C)=25-10\sqrt{5}.
 \end{align*}

Notice that if $\dim(\C)=\frac{25-5\sqrt{5}}{2}$, then $\dim(\C^\sigma)=\frac{25+5\sqrt{5}}{2}$, hence it suffices to consider when $\dim(\C)=\frac{25+5\sqrt{5}}{2}$ below.
We know that $\C$ contains a self-dual simple object $X=\hat{\sigma}(I)$ with $\dim(X)^2=\frac{3+\sqrt{5}}{2}$, simple objects $Y$ and $Z=\hat{\sigma}(Y)$ such that $\dim(Y)^2=\frac{\dim(\C)}{\text{FPdim}(\C)}=\frac{3-\sqrt{5}}{2}$ and $\dim(Z)^2=\frac{\dim(\C)}{\sigma(\text{FPdim}(\C))}=\frac{7+3\sqrt{5}}{2}$. Let $f$ be another formal codegree of $\C$, then $N(f)=25$ or $125$. If $N(f)=25$, then $f$ is a root of $x^2-ax+25=0$ with $a^2\geq100$, we obtain $f=5$ by  equation (\ref{classeqt}), thus $\C$ contains a simple object $V$ with $\dim(V)^2=\frac{5+\sqrt{5}}{2}$, which is impossible as $\dim(V)\notin\FQ(\zeta_{5^n})^+$. Thus, $N(f)=125$ for all other formal codegrees of $\C$, a direct computation shows $\Q(\C)=\{I,X,Y,Z,V_1,V_2,W_1,W_2\}$ with $\dim(V_1)^2=\dim(V_2)^2=1$ and $\dim(W_1)^2=\dim(W_2)^2=\frac{3+\sqrt{5}}{2}$.

By decomposing $\FPdim(\C)$ into the sum of squares of Frobenius-Perron dimensions of eight simple objects over field $\FQ(\sqrt{5})$,  we find  that $\C$ always contains a simple object of Frobenius-Perron dimension $\frac{1+\sqrt{5}}{2}$, therefore, $\C\cong\C(\ssl_2,3)_\text{ad}\boxtimes\D$  as  modular fusion subcategory by \cite[Theorem 3.13]{DrGNO2}. Thus, again by Lemma \ref{degd=2}
 \begin{align*}
&\C\cong\C(\ssl_2,3)_\text{ad}\boxtimes\C(\ssl_2,3)_\text{ad}\boxtimes
\C(\ssl_2,3)^{\sigma}_\text{ad}~\text{if}\dim(\C)=\frac{25+5\sqrt{5}}{2},\\
&\C\cong\C(\ssl_2,3)^\sigma_\text{ad}\boxtimes
\C(\ssl_2,3)^\sigma_\text{ad}\boxtimes\C(\ssl_2,3)_\text{ad}
~\text{if}\dim(\C)=\frac{25-5\sqrt{5}}{2},
 \end{align*} $\sigma\in\Gal(\FQ(\zeta_5)/\FQ)$ such that $\sigma(\sqrt{5})=-\sqrt{5}$, this completes the proof of the proposition.
 \end{proof}

 Let $\C$ be a modular fusion category. Let $p\geq 5$, and let $\rho_1$ be  a   $\frac{p+1}{2}$-dimensional irreducible representation of $\SL(2,\Z)$ with level equal to $p$, it is proved in \cite[Proposition 3.22]{NgRWW} that if the associated modular representation $\rho_\C$ is equivalent to $m\rho_0\oplus\rho_1$, then $m=1$.

We strengthen the above conclusion in the following proposition and theorem.

\begin{prop}\label{keyprop}
Let $p\geq7$ be a prime, and let $\C$ be a modular fusion category such that $\rho_\C\cong\rho_0\oplus\rho_1$, where  $\rho_1$ is the $\frac{p+1}{2}$-dimensional irreducible representation of $\SL(2,\Z)$ of level $p$. Then  $\C$ is a Galois conjugate of a pseudo-unitary fusion category.
\end{prop}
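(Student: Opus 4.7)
The plan is to exploit Example \ref{example}: the pseudo-unitary modular fusion category $\C(\ssl_2,2(p-1))_A^0$ realizes $\rho_{\C_A^0}\cong\rho_0\oplus\rho_1$, so I will show that this modular representation determines the formal codegrees of $\C$ and places $\FPdim(\C)$ in the Galois orbit of $\dim(\C)$, which is precisely the condition for some Galois conjugate of $\C$ to be pseudo-unitary.

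First, by comparison with Example \ref{example}, the $\hxt$-spectrum of $\rho_\C$ consists of a single Galois orbit of $(p-1)/2$ primitive $p$-th roots of unity together with the eigenvalue $1$ appearing with multiplicity $2$. Applying the Galois symmetry $\sigma^2(t_X)=t_{\hat\sigma(X)}$ from \cite[Theorem II]{DongLNg}, the simple objects of $\C$ partition into a single Galois orbit $\Q_I(\C)$ of size $(p-1)/2$ containing the unit (since $t_I=1/\xi$ is a non-trivial $p$-th root of unity), and a Galois-stable pair $\{Y_1,Y_2\}$ with $t_{Y_i}=1$. Consequently the formal codegrees split into the Galois orbit of $\dim(\C)$ of size $(p-1)/2$, together with two codegrees $f_{Y_1},f_{Y_2}$ living in the unique quadratic subfield $\FQ(\sqrt{p^*})\subseteq\FQ(\zeta_p)$.

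Next I would show $f_{Y_1}=f_{Y_2}=p$. Since these are totally positive $d$-numbers in $\FQ(\sqrt{p^*})$ forming a Galois-stable pair, they are either a common rational integer or a non-trivial Galois conjugate pair of quadratic algebraic integers. I would rule out the latter case, as well as all integer values other than $p$, by combining the $d$-number condition \cite[Theorem 1.2]{O1}, the class equation (\ref{classeqt}), and the lower bound $\sigma(\dim(\C))>4\sqrt{3}/5$ from \cite[Proposition A.1.1]{O3}, running parallel to the quadratic-unit analysis in Lemma \ref{degd=2} and Proposition \ref{dimpfcodeg}. With $f_{Y_1}=f_{Y_2}=p$ in hand, the class equation yields $\sum_\sigma 1/\sigma(\dim(\C))=(p-2)/p$, and the trigonometric identity $\sum_{k\,\text{odd},\,1\leq k\leq p-2}\cos(k\pi/p)=\tfrac{1}{2}$ matches the Galois orbit of $\dim(\C)$ with $\{p/(4\sin^2(k\pi/(2p))):k\text{ odd},1\leq k\leq p-2\}$, the orbit of $\dim(\C(\ssl_2,2(p-1))_A^0)$.

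Finally, the Perron--Frobenius property gives $f_\chi=\sum_X|\chi(X)|^2\leq\sum_X\FPdim(X)^2=\FPdim(\C)$ for every irreducible character $\chi$ of $\Gr(\C)\otimes\FC$, so $\FPdim(\C)$ is the largest formal codegree. Since $p/(4\sin^2(\pi/(2p)))>p$ for $p\geq 7$, we conclude $\FPdim(\C)=p/(4\sin^2(\pi/(2p)))$, which lies in the Galois orbit of $\dim(\C)$. Hence there exists $\sigma\in\Gal(\overline{\FQ}/\FQ)$ with $\sigma(\dim(\C))=\FPdim(\C)$, and $\C^\sigma$ is pseudo-unitary as desired. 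The main obstacle is the third paragraph's core claim $f_{Y_1}=f_{Y_2}=p$; the other steps follow mechanically from the modular data, but excluding alternative totally-positive $d$-number solutions in $\FQ(\sqrt{p^*})$ requires a careful interplay between the $d$-number test, the Ostrik lower bound, and the arithmetic of units in quadratic fields.
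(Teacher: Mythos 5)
Your strategy (pin down all formal codegrees from the modular data and conclude that $\FPdim(\C)$ lies in the Galois orbit of $\dim(\C)$) is in the right spirit, but it assumes the crucial point instead of proving it. The parenthetical claim that $t_I=1/\xi$ is a non-trivial $p$-th root of unity --- equivalently, that the unit object is not one of the two simple objects whose normalized twist equals $1$ --- is exactly the heart of the matter, and nothing in the hypothesis $\rho_\C\cong\rho_0\oplus\rho_1$ forces it: the eigenvalue $1$ does occur (twice) in the $\hxt$-spectrum, and a priori $t_I$ could be $1$ (the multiplicative central charge can perfectly well be trivial). The paper's proof is devoted almost entirely to excluding this configuration: using the block form of $\rho_\C(\hxs)$ from \cite[Proposition 3.22]{NgRWW}, it writes the $S$-matrix row of such a putative unit in terms of $\beta$ and the orthogonal $2\times2$ block $(a_{ij})$, and argues separately for $p\equiv3\pmod 4$ (all formal codegrees rational, hence $\C$ integral, hence $\C_\pt$ non-trivial and $\rank(\C)\geq p$, contradiction) and $p\equiv1\pmod 4$ (rationality of $a_{12}^2a_{22}^2$ forces $(p-1)p^l$ and $(p-1)p^l-4$ to be perfect squares, hence $p=5$). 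That this exclusion genuinely fails at $p=5$ --- where $I\in\{X_0,X_1\}$ leads to $\dim(\C)=5\epsilon_5^{\pm2}$, as noted in the proof of Theorem \ref{rank(p+3)/2} --- shows your assertion cannot follow formally from the shape of the representation; it needs this arithmetic input together with $p\geq7$.

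Beyond this, the rest of your plan is only sketched: the claim $f_{Y_1}=f_{Y_2}=p$ is announced as the \emph{main obstacle} without an actual argument (note also that for $p\equiv3\pmod4$ these totally positive codegrees cannot lie in $\FQ(\sqrt{p^*})$, which is imaginary; they would have to be rational integers), and identifying the Galois orbit of $\dim(\C)$ with that of $p/(4\cos^2(\frac{d\pi}{p}))$ from the single relation $\sum_\sigma 1/\sigma(\dim(\C))=(p-2)/p$ is not justified --- that identification is really the content of Theorem \ref{rank(p+3)/2}, which the proposition is not required to reproduce. The paper's own endgame is much lighter: once $I\in\{X_2,\dots,X_d\}$ is known, if the homomorphism $\FPdim(-)$ were attached to $X_0$ or $X_1$ then $\FPdim(\C)$ would be rational, forcing $\C$ integral and $\rank(\C)\geq p$, which is absurd; hence the $\FPdim$-object lies in $\Q_I(\C)$ and some Galois twist of $\C$ is pseudo-unitary. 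As it stands, your proposal has a genuine gap at its first step and defers the second to future work, so it does not constitute a proof.
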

\begin{proof}Let $\Q(\C)=\{X_0,X_1,\cdots,X_d\}$ and  $S=(S_{X_i,X_j})$ be the un-normalized $S$-matrix of $\C$. Let $d:=\frac{p+1}{2}$, $p^*=\left(\frac{-1}{p}\right)p$ and $\beta:=\left(\frac{a}{p}\right)/\sqrt{p^*}$, where $a$ is an integer comprime to $p$. By \cite[Proposition 3.22]{NgRWW} there exists an real orthogonal matrix
$U=\left(
     \begin{array}{cc}
       A_1 & 0 \\
       0 & A_2 \\
     \end{array}
   \right)
$ such that
\begin{align*}
\rho_\C(\hxs)=\frac{1}{\sqrt{\dim(\C)}}S=U\left(
                                \begin{array}{cc}
                                  1 & 0 \\
                                  0 & \rho_1(\hxs) \\
                                \end{array}
                              \right)
U^T,\rho_\C(\hxt)=\text{diag}(1,\zeta_p^{a\cdot 0},\cdots,\zeta_p^{a\cdot (d-1)^2}),
\end{align*}
  $\rho_1(\hxs)=\beta\left(
                             \begin{array}{cccc}
                               1 & \sqrt{2} & \cdots & \sqrt{2} \\
                               \sqrt{2} &  \\
                               \vdots & &2\cos(\frac{4\pi ajk}{p})   \\
                               \sqrt{2} &  \\
                             \end{array}
                           \right)
$,  $1\leq j,k,l\leq d-1$, $A_1=\left(
                   \begin{array}{cc}
                   a_{11} & a_{12} \\
       a_{21} & a_{22}
                   \end{array}
                 \right)
$, $A_2=\text{diag}(\lambda_1,\cdots,\lambda_{d-1})$, where $\lambda_l\in\{\pm1\}$. Moreover, $\sqrt{2}a_{12},\sqrt{2}a_{22}\in\FQ$, $0<a_{12}^2<1$ and $0<a_{22}^2<1$.

Up to isomorphism, there are exactly two $\frac{p+1}{2}$-dimensional irreducible representations of level $p$ \cite{Eholzer1,NobWol} depending on the value $\left(\frac{a}{p}\right)$. We assume $a=1$ below, the other case is the same. A direct computation shows \begin{align*}
\rho_\C(\hxs)=\left(
\begin{array}{cccccc}
1+a_{12}^2(\beta-1) & a_{12}a_{22}(\beta-1) & \sqrt{2}\beta a_{12}\lambda_1 & \cdots\quad \sqrt{2}\beta a_{12}\lambda_{d-1}\\
a_{12}a_{22}(\beta-1)& 1+a_{22}^2(\beta-1) &\sqrt{2}\beta a_{22}\lambda_1 & \cdots \quad\sqrt{2}\beta a_{22}\lambda_{d-1}  \\
\sqrt{2}\beta a_{12}\lambda_1 &\sqrt{2}\beta a_{22}\lambda_1  &  \\
\vdots &\vdots &  & 2 \beta\lambda_j\lambda_k\cos(\frac{4\pi ajk}{p}) \\
 \sqrt{2}\beta a_{12}\lambda_{d-1}& \sqrt{2}\beta a_{22}\lambda_{d-1}  \\
 \end{array}
 \right),
\end{align*}
$1\leq j,k\leq d-1$. We show that either $X_0$ or $X_1$ don't represent the unit object $I$.

On the contrary, without loss of generality, let $X_0=I$, then $\frac{1}{\sqrt{\dim(\C)}}=|1+(\beta-1)a_{12}^2|$. Notice that
\begin{align*}
\beta=\left(\frac{1}{p}\right)/\sqrt{p^*}=\left\{
                                             \begin{array}{ll}
                                               \frac{1}{p}, & \hbox{$p\equiv1\mod 4$;} \\
                                               \frac{i}{p}, & \hbox{$p\equiv3\mod 4$.}
                                             \end{array}
                                           \right.
\end{align*}
then $|1+(\beta-1)a_{12}^2|^2=\left\{
                                \begin{array}{ll}
1-2a_{12}^2+\frac{p+1}{p}a_{12}^4+2(a_{12}^2-a_{12}^4)\frac{1}{\sqrt{p}}, & \hbox{$p\equiv1\mod 4$;} \\
                                  1-2a_{12}^2+\frac{p+1}{p}a_{12}^4, & \hbox{$p\equiv3\mod 4$.}
                                \end{array}
                              \right.
$, and
\begin{align*}
|\beta-1|^2=\left\{
              \begin{array}{ll}
                \frac{p+1-2\sqrt{p}}{p}, & \hbox{$p\equiv1\mod 4$;} \\
                \frac{p+1}{p}, & \hbox{$p\equiv3\mod 4$.}
              \end{array}
            \right.
\end{align*}
 When $p\equiv3\mod 4$, the set of  formal codegrees of $\C$ is
\begin{align*}
\frac{\dim(\C)}{(\sqrt{\dim(\C)}|S_{1,l}|)^2}\in\{\dim(\C),
\frac{1}{a_{12}^2a_{22}^2|\beta-1|^2}, \frac{p}{(\sqrt{2}a_{12}\lambda_j)^2}, 1\leq j\leq d\}, 1\leq l\leq d+1,
\end{align*}
so  the formal codegrees of $\C$ are all rational, which implies they are  integers. In particular, $\FPdim(\C)=p^m$ by Cauchy's theorem \cite[Theorem 3.9]{BNRW1}, hence $\C$ must be integral as $p$ is odd \cite[Corollary 3.5.8]{EGNO}. However, in this case $\C_\pt$ must be non-trivial, so $\rank(\C)\geq p$, it is impossible.

When $p\equiv1\mod 4$, $\frac{1}{\dim(\C)}=1-2a_{12}^2+\frac{p+1}{p}a_{12}^4+2(a_{12}^2-a_{12}^4)\frac{1}{\sqrt{p}}\notin\Z$, again the Cauchy's theorem \cite[Theorem 3.9]{BNRW1} implies that $N(\dim(\C))=p^m$ for $m\geq2$. Let $\sigma\in\Gal(\FQ(\zeta_p)/\FQ)$ be a generator, then $\sigma$ has no invariant simple objects \cite[Proposition 3.22]{NgRWW}. Hence \begin{align*}
\sigma(\dim(\C))=\frac{\dim(\C)}{\dim(X_2)^2}=\frac{1}{a_{12}^2a_{22}^2(\beta-1)^2}
=\frac{p}{a_{12}^2a_{22}^2}\cdot\frac{(p+1+2\sqrt{p})}{(p-1)^2}.
\end{align*}
Therefore, $N(\dim(\C))=\dim(\C)\sigma(\dim(\C))=\frac{p^2}{(p-1)^2a_{12}^4a_{22}^4}$, that is, $a_{12}^4a_{22}^4(p-1)^2=p^{-m}$ with $2\mid m$ and $m\geq0$, since $a_{12}^2a_{22}^2\in\FQ$.
Assume $m=2l$, so $a_{12}^2a_{22}^2=\frac{1}{p^l(p-1)}$; meanwhile $a_{12}^2+a_{22}^2=1$, so $a_{12}^2$ and $a_{22}^2$ are roots of equation $x^2-x+\frac{1}{p^l(p-1)}=0$, consequently
\begin{align*}
a_{12}^2,a_{22}^2=\frac{\sqrt{(p-1)p^l}\pm\sqrt{(p-1)p^l-4}}{2\sqrt{(p-1)p^l}}.
\end{align*}
Since $p\geq 7$ and $p\equiv1\mod 4$, $((p-1)p^l-4,(p-1)p^l)=4$; as $a_{12}^2,a_{22}^2$ are rational, we have $(p-1)p^l-4=q_1^2$ and $(p-1)p^l=q_2^2$ for nonnegative integers $q_1,q_2$. Note that $(q_2-q_1)(q_2+q_1)=4$, then $l=0$ and $p=5$, it is a contradiction.

Therefore, the unit object $I\in\{X_2\cdots,X_d\}$. Meanwhile the Galois symmetry \cite[Theorem II]{DongLNg} implies that the unit object $I$ has exactly $\frac{p-1}{2}$ Galois conjugates, so $\Q_I(\C)=\{X_2,\cdots,X_d\}$. If the simple object $X_1$ or $X_2$ determines the homomorphism $\FPdim(-)$, then $\FPdim(\C)=\frac{\dim(\C)}{\dim(X_1)^2}$  or $\FPdim(\C)=\frac{\dim(\C)}{\dim(X_2)^2}$. Note that $\FPdim(\C)\in\FQ$ in both cases, again $\C$ must be an integral fusion category, so $\rank(\C)\geq p$, which is absurd. Therefore, the  simple object which determines the homomorphism $\FPdim(-)$ belongs to $\Q_I(\C)$, so $\C$ is  Galois conjugate to  a pseudo-unitary fusion category.
\end{proof}

\begin{theo}\label{rank(p+3)/2}
Let $p\geq5$ be a prime and $\C$  a modular fusion category such that $\rho_\C\cong\rho_0\oplus\rho_1$, where $\rho_1$ is the $d$-dimensional irreducible representation of $\SL(2,\Z)$ of level $p$. Then  $\C$ is Grothendieck equivalent to $\C(\ssl_2,2(p-1))_A^0$. Moreover, $\dim(\C)=p\cdot u$ where $u$ is a Galois conjugate of  algebraic unit $4\cos^2(\frac{d\pi}{p})$.
\end{theo}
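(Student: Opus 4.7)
The plan is to combine the explicit matrix description of the modular data $(S,T)$ obtained in Proposition \ref{keyprop} with the Verlinde formula, and to compare the outcome against the data of $\C(\ssl_2,2(p-1))_A^0$ read from Example \ref{example}. Both categories have the same attached $\SL(2,\Z)$-representation $\rho_0\oplus\rho_1$ by hypothesis, so they share the rank $d+1=(p+3)/2$. Since Grothendieck equivalence and the set of formal codegrees are Galois invariants, Proposition \ref{keyprop} lets me reduce to the pseudo-unitary case, in which every $\dim(X)>0$.

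With this reduction in hand, I would first pin down the unit. The row of $\rho_\C(\hxs)=S/\sqrt{\dim(\C)}$ indexed by $I$ must consist of positive entries $\dim(X)/\sqrt{\dim(\C)}$; the end of the proof of Proposition \ref{keyprop} already places $I$ in the Galois orbit $\Q_I(\C)=\{X_2,\ldots,X_d\}$. Positivity of the entries $\sqrt{2}\beta a_{12}\lambda_{j_0}$, $\sqrt{2}\beta a_{22}\lambda_{j_0}$, and $2\beta\lambda_{j_0}\lambda_k\cos(4\pi j_0 k/p)$ in the row $X_{j_0}=I$, together with $\theta_I=1$ and the Galois symmetry $\sigma^2(t_X)=t_{\hat\sigma(X)}$, will (after a permissible relabeling of the Galois orbit) fix the orthogonal parameters of $A_1$ and the signs $\lambda_k$. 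The resulting $(S,T)$ then coincide with those of $\C(\ssl_2,2(p-1))_A^0$ computed in Example \ref{example}. Applying the Verlinde formula (\ref{Verlinde}) produces the fusion coefficients $N_{X,Y}^Z$ of $\C$, which agree with those of $\C(\ssl_2,2(p-1))_A^0$, giving the Grothendieck equivalence.

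For the dimension formula, $\dim(\C)$ is itself a formal codegree (namely $\dim(\C)/\dim(I)^2$), and formal codegrees are invariants of the Grothendieck ring. Example \ref{example} lists the formal codegrees of $\C(\ssl_2,2(p-1))_A^0$ as $p$ (with multiplicity two, from $V_{p-1}^\pm$) together with the Galois conjugates of $p/(4\cos^2(d\pi/p))$. After the pseudo-unitary reduction, $\dim(\C)=p$ would force $\FPdim(\C)=p$ prime, so a standard weakly-integral counting argument would make $\C$ pointed of rank $p$, contradicting $\rank(\C)=(p+3)/2<p$. Hence $\dim(\C)=p\cdot u$ with $u$ a Galois conjugate of the algebraic unit $1/(4\cos^2(d\pi/p))$, which is the claimed form.

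The main obstacle is the second paragraph: the orthogonal parameters of $A_1$ and the signs $\lambda_k$ from Proposition \ref{keyprop} are not a priori unique, so the technical heart of the proof is to verify that positivity of the $I$-row and the Galois-symmetry constraints force these parameters to the specific values realized by $\C(\ssl_2,2(p-1))_A^0$. Once the $S$-matrix is pinned down, both the Verlinde comparison and the dimension formula follow directly.
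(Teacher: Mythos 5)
Your overall route is the paper's: reduce to the pseudo-unitary case via Proposition \ref{keyprop}, pin down the remaining parameters in the matrix form of $\rho_\C(\hxs)$, and then match modular data with Example \ref{example} and invoke the Verlinde formula. But the step you yourself flag as the technical heart is exactly where the proposal has a genuine gap: positivity of the unit row together with Galois symmetry does \emph{not} determine $A_1$. Positivity of the entries of the row indexed by $I=X_{j_0}$ only relates the signs $\lambda_k$ and the signs of $a_{12},a_{22}$ to $\lambda_{j_0}$; the magnitudes $a_{12}^2,a_{22}^2$ remain completely free subject to $a_{12}^2+a_{22}^2=1$, and Galois symmetry adds nothing because $\sqrt2\,a_{12},\sqrt2\,a_{22}$ are already rational (Proposition \ref{keyprop}), hence fixed by every Galois automorphism. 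Unless $a_{12}^2=a_{22}^2=\tfrac12$, the two simple objects $X_0,X_1$ acquire quantum dimensions proportional to $|a_{12}|$ and $|a_{22}|$, which need not coincide with the dimensions of $V_{p-1}^\pm$ in Example \ref{example}; the $S$-matrix is then not that of $\C(\ssl_2,2(p-1))_A^0$ and the Verlinde comparison collapses. The paper closes this by an arithmetic argument you do not have: it first shows $\dim(\C)=\FPdim(\C)=\frac{p}{4\cos^2(\frac{d\pi}{p})}$ (the homomorphism $\FPdim$ lies in the Galois orbit of $I$ by Proposition \ref{keyprop}, and $\FPdim(\C)$ is maximal among its conjugates), so $N(\dim(\C))$ is a power of $p$; then $\frac{p}{(\sqrt2 a_{12})^2}$ and $\frac{p}{(\sqrt2 a_{22})^2}$ are formal codegrees of $\C$, rational hence integers, whose norms divide that of $\dim(\C)$, so each equals a power of $p$, i.e.\ $(\sqrt2 a_{12})^2=p^{-m_1}$ and $(\sqrt2 a_{22})^2=p^{-m_2}$; the relation $(\sqrt2 a_{12})^2+(\sqrt2 a_{22})^2=2$ then forces $m_1=m_2=0$. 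Only after that does positivity of the Frobenius--Perron dimensions enter, to fix the signs $\mu_1=\mu_2=\lambda_j$ and identify which $X_j$ is the unit, before defining the ring isomorphism $\phi$.

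Two further points. Proposition \ref{keyprop} is stated only for $p\geq7$, so your blanket pseudo-unitary reduction does not cover $p=5$; the paper treats that case separately (the unit may lie in $\{X_0,X_1\}$, giving $\dim(\C)=5\epsilon_5^{\pm2}$, and one concludes with Lemma \ref{degd=2}). Your derivation of the dimension statement from the list of formal codegrees is fine once the Grothendieck equivalence is in place, but note that the paper needs the equality $\dim(\C)=\frac{p}{4\cos^2(\frac{d\pi}{p})}$ \emph{before} the ring comparison, since it is precisely what powers the norm/codegree argument above; deferring it, as you do, leaves the key step unsupported.
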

\begin{proof} By Proposition \ref{keyprop} we know that $\C$ is a Galois conjugate of a pseudo-unitary fusion category if $p\geq7$. Without loss of generality, we assume that $\C$ is pseudo-unitary.  As we have proved in Proposition \ref{keyprop} that $X_0,X_1\notin\Q_I(\C)$, and $\rho(\hxs)=\frac{1}{\sqrt{\dim(\C)}}S$,
 \begin{align*}
\dim(\C)=\FPdim(\C)=\frac{1}{|S_{X_k,X_k}|^2}~\text{for some $2\leq k \leq d$}.
 \end{align*}
Meanwhile, $\FPdim(\C)$ is the largest among its Galois conjugates, we see $\dim(\C)=\frac{p}{4\cos^2(\frac{d\pi}{p})}$. In addition, if $p=5$, then   the argument of Proposition \ref{keyprop} also shows that $\dim(\C)=5\epsilon_5^{\pm2}$ if the unit object $I\in\{X_0,X_1\}$ and that $\C$ is a Galois conjugate of a pseudo-unitary fusion category if $I\in\{X_2,X_3\}$. Hence, in both cases, we see that  $\C$ is braided tensor equivalent to  a Galois conjugate of $\C(\ssl_2,8)_A^0$ by Lemma \ref{degd=2}.

We assume $p\geq 7$ below, then  $\C$ is a simple modular fusion category. Indeed, if $\C$ is not simple, then fusion subcategories of $\C$ are modular by \cite[Theorem 3.1]{Yu}. Let $\D$ be an arbitrary non-trivial modular fusion subcategory of $\C$, then $\C\cong\D\boxtimes\D_\C'$ by \cite[Theorem 3.13]{DrGNO2}. As $p$ can't divide both $\dim(\D)$ and $\dim(\D_\C')$, therefore $\D$ and $\D_\C'$ are transitive modular fusion categories. By \cite{NgWaZh}, we know $\D\cong\C(\ssl_2,p-2)_\ad^{\sigma_1}$ and $\D_\C'\cong\C(\ssl_2,p-2)_\ad^{\sigma_2}$ for $\sigma_1,\sigma_2\in\Gal(\FQ(\zeta_p)/\FQ)$. However, $\dim((\C(\ssl_2,p-2)_\ad)=\frac{p}{4\sin^2(\frac{\pi}{p})}$ and
\begin{align*}
\dim(\C)
=\dim(\D)\dim(\D_\C')=\sigma_1(\dim(\C(\ssl_2,p-2)_\ad))\sigma_1(\dim(\C(\ssl_2,p-2)_\ad)),
\end{align*}
so $p^\frac{p-1}{2}=N(\dim(\C))=N(\dim(\D))N(\dim(\D_\C'))=p^{p-3}$, i.e., $p=5$, it is impossible.

A direct computation shows that  the set of formal codegrees of $\C$ is
\begin{align*}
\left\{\frac{\dim(\C)}{|\sqrt{\dim(\C)}S_{X_k,X_0}|^2},\frac{\dim(\C)}{|\sqrt{\dim(\C)}S_{X_k,X_1}|^2},
\sigma(\dim(\C)), \forall \sigma\in\Gal(\FQ(\zeta_p)/\FQ)\right\},
\end{align*}
while norms  of $\frac{\dim(\C)}{|\sqrt{\dim(\C)}S_{X_k,X_0}|^2}=\frac{p}{(\sqrt{2}a_{12})^2}$ and $\frac{\dim(\C)}{|\sqrt{\dim(\C)}S_{X_k,X_0}|^2}=\frac{p}{(\sqrt{2}a_{22})^2}$ divide that of $\dim(\C)$, which is power of $p$, consequently $(\sqrt{2}a_{12})^2=\frac{1}{p^{m_1}}$ and $(\sqrt{2}a_{22})^2=\frac{1}{p^{m_2}}$ for nonnegative integers $m_1,m_2$, while $(\sqrt{2}a_{12})^2+(\sqrt{2}a_{22})^2=2$, so $a_{12}=\frac{\mu_1}{\sqrt{2}}$ and $a_{22}=\frac{\mu_2}{\sqrt{2}}$, $\mu_1,\mu_2\in\{\pm1\}$. Hence, we obtain that
\begin{align*}
\rho_\C(\hxs)=\left(
\begin{array}{cccccc}
\frac{\beta+1}{2} & \frac{\mu_1\mu_2(\beta-1)}{2} & \beta \mu_1\lambda_1 & \cdots\quad \beta \mu_1\lambda_{d-1}\\
\frac{\mu_1\mu_2(\beta-1)}{2}& \frac{\beta+1}{2} &\beta \mu_2\lambda_1 & \cdots \quad\beta \mu_2\lambda_{d-1}  \\
\beta\mu_1\lambda_1 &\beta \mu_2\lambda_1  &  \\
\vdots &\vdots &  & 2 \beta\lambda_j\lambda_k\cos(\frac{4\pi ajk}{p}) \\
 \beta \mu_1\lambda_{d-1}& \beta \mu_2\lambda_{d-1} \\
 \end{array}
 \right),1\leq j,k\leq d-1.
\end{align*}
We choose $-a=\frac{p-1}{2}$ below, the other case is same. Since $4\left(\frac{p-1}{2}\right)^3=(p-1)^2\frac{p-1}{2}\equiv\frac{p-1}{2}\mod p$, when $-a=j=k=\frac{p-1}{2}$,
\begin{align*}
|2 \beta\lambda_j\lambda_k\cos(\frac{4\pi ajk}{p})|^2=\frac{4\cos^2(\frac{(d-1)\pi}{p})}{p}
=\frac{1}{\dim(\C)},
\end{align*}
therefore $X_d$ represents the isomorphism class of  the unit object.

Consequently, the set of the Frobenius-Perron dimensions of simple objects of $\C$ is:
\begin{align*}
\left\{\frac{\mu_1\lambda_{d-1}}{2\cos(\frac{(d-1)\pi}{p})},
\frac{\mu_2\lambda_{d-1}}{2\cos(\frac{(d-1)\pi}{p})},
 \frac{\lambda_j\lambda_{d-1}\cos(\frac{j\pi}{p})}{\cos(\frac{(d-1)\pi}{p})},1\leq j\leq d-1\right\},
\end{align*}
since the Frobenius-Perron dimensions of simple objects are positive, $\mu_1=\mu_2=\lambda_j$ for all $1\leq  j\leq d-1$. By comparing the Frobenius-Perron dimensions of simple objects of $\C$ and $\C(\ssl_2,2(p-1))_A^0$ (see Example \ref{example}), let $\phi:\Gr(\C(\ssl_2,2(p-1))_A^0)\to\Gr(\C)$ be a morphism such that $\phi(V_{p-1}^\pm)=X_0,X_1$ and $\phi(V_{2(d-j)})=X_j$, $1\leq j\leq d$, respectively. Notice that $\C$ and $\Gr(\C)$ share the same modular data and  $\phi$ preserves the Frobenius-Perron dimensions of simple objects, hence $\phi$ is an isomorphism of fusion ring by the Verlinde formula (\ref{Verlinde}).
\end{proof}

\begin{coro}\label{ptimesunit}Let $\C$ be a  modular fusion category such that $\dim(\C)=p\cdot u$, where $u\neq1$ and $u\in\FQ(\zeta_p)^+$ is a totally positive algebraic unit. Then $\C$ is Grothendieck equivalent to $\C(\ssl_2,2(p-1))_A^0$.
 \end{coro}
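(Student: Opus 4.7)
The plan is to verify that the hypothesis forces the modular representation $\rho_\C$ to be isomorphic to $\rho_0\oplus\rho_1$, where $\rho_1$ is the $(p+1)/2$-dimensional irreducible representation of $\SL(2,\Z)$ of level $p$, and then to invoke Theorem \ref{rank(p+3)/2}.

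First I would extract arithmetic consequences of the hypothesis. Since $u$ is a totally positive unit in $\FQ(\zeta_p)^+$ with $u\neq 1$ and the only totally positive rational unit is $1$, we have $d_{\dim(\C)}=[\FQ(u):\FQ]\geq 2$ and $N_{\FQ(u)/\FQ}(u)=1$, giving $N(\dim(\C))=p^{d_{\dim(\C)}}$. The rank inequality (\ref{NormrankIneq}) then yields $\rank(\C)\leq p$. By Cauchy's theorem \cite[Theorem 3.9]{BNRW1}, $\ord(T_\C)$ is a power of $p$; combined with Remark \ref{numGalcon}, some simple $X_0$ has $\ord(t_{X_0})=p^n$ with $|\Q_{X_0}(\C)|\geq p^{n-1}(p-1)/2$, and the rank bound forces $n=1$ for $p\geq 5$. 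Hence $\rho_\C$ has level exactly $p$, factoring through $\SL(2,\Z/p)$, whose irreducibles are classified in \cite{NobWol}.

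Second, I would analyze the Galois orbit structure. The orbit $\Q_I(\C)$ of the unit has size $d_{\dim(\C)}\geq 2$, with squared dimensions of the form $u/\sigma(u)$, each a totally positive unit. Since $t_{X_0}\neq 1$ while every $t_Y=1$ for $Y\in\Q_I(\C)$, we have $X_0\notin\Q_I(\C)$ and $|\Q_{X_0}(\C)|\geq(p-1)/2$. Combining the inequality $|\Q_I(\C)|+|\Q_{X_0}(\C)|\leq\rank(\C)\leq p$ with the class equation (\ref{classeqt}), I would show that $d_{\dim(\C)}=(p-1)/2$ (so $u$ in fact generates $\FQ(\zeta_p)^+$), $\rank(\C)=(p+3)/2$, and that the two remaining simples outside $\Q_I(\C)\cup\Q_{X_0}(\C)$ carry $t_X=1$ and have rational formal codegree equal to $p$. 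Comparing the resulting $\hxt$-spectrum with the tables in \cite[Appendix]{PSYZ} then forces $\rho_\C\cong\rho_0\oplus\rho_1$.

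Once $\rho_\C\cong\rho_0\oplus\rho_1$ is established, Theorem \ref{rank(p+3)/2} delivers the conclusion. The hard part is the middle step: ruling out competing configurations, such as the $(p-1)/2$-dimensional irreducibles of $\SL(2,\Z/p)$ appearing as constituents of $\rho_\C$, or $d_{\dim(\C)}$ being a proper divisor of $(p-1)/2$, or additional orbits of non-trivial $t$-values. This requires a careful case analysis combining the $d$-number property of formal codegrees \cite[Theorem 1.2]{O1}, the Galois symmetry $\sigma^2(t_X)=t_{\hat\sigma(X)}$ from \cite[Theorem II]{DongLNg}, and the precise rank upper bound $\rank(\C)\leq p$.
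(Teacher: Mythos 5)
Your overall route coincides with the paper's (show $\rho_\C\cong\rho_0\oplus\rho_1$ and then quote Theorem \ref{rank(p+3)/2}), but the step you explicitly defer as ``the hard part'' is the actual content of the proof, and the tools you propose for it do not suffice. After reducing to level $p$, the dangerous configuration is that $\rho_\C$ is an \emph{irreducible} representation of dimension $p-1$, so that $\rank(\C)=p-1$ and $\Q(\C)$ splits into two Galois orbits of size $\frac{p-1}{2}$. This is compatible with your rank bound, with the $d$-number property of formal codegrees, and with Galois symmetry, so nothing you list excludes it, and your asserted conclusions $d_{\dim(\C)}=\frac{p-1}{2}$, $\rank(\C)=\frac{p+3}{2}$ remain unsupported. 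The paper eliminates this case by an arithmetic argument absent from your outline: every formal codegree has the form $p\cdot u_X$ with $u_X=u/\dim(X)^2$ a totally positive algebraic integer, the class equation becomes $p=\sum_{i=1}^{2}\sum_{\sigma}1/\sigma(u_{X_i})$, and since $u_I=u\neq1$ the Siegel trace bound for totally positive units (together with the Liang--Wu refinement \cite{LM} handling the exceptional cubic unit at $p=7$) forces $p\geq\frac{5}{3}\cdot\frac{p-1}{2}+\frac{p-1}{2}$, i.e.\ $p\leq3$, a contradiction. Comparing $\hxt$-spectra with the tables of \cite{PSYZ} cannot substitute for this, since a hypothetical rank-$(p-1)$ category of level $p$ is not ruled out by its $\hxt$-spectrum alone.

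There are also smaller defects. The claim that $t_Y=1$ for all $Y\in\Q_I(\C)$ is unjustified and is false in the very category you are classifying: $t_I=1/\xi$ and Galois symmetry gives $t_{\hat\sigma(I)}=\sigma^2(t_I)$, while in $\C(\ssl_2,2(p-1))_A^0$ one has $t_I=\zeta_p^{-(p^2-1)/8}\neq1$ and the two simple objects \emph{outside} $\Q_I(\C)$ are precisely the ones with $t=1$ (Example \ref{example}); so your deduction $X_0\notin\Q_I(\C)$ does not stand as written. You also record only $\rank(\C)\leq p$, whereas non-pointedness gives the strict bound $\rank(\C)\leq p-1$ via inequality (\ref{NormrankIneq}) and \cite[Remark 4.2.3]{O3}; the strict bound is what removes, at the dimension-counting stage, a $p$-dimensional constituent or a sum of $\frac{p-1}{2}$- and $\frac{p+1}{2}$-dimensional ones. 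Finally, the case $[\FQ(u):\FQ]=2$ requires the separate fundamental-unit estimate of Lemma \ref{degd=2} (pinning it to $p=5$), two copies of a $\frac{p-1}{2}$-dimensional irreducible are excluded by \cite[Lemma 5.2.2]{PSYZ}, a lone $\frac{p+1}{2}$-dimensional irreducible is excluded by \cite{Eholzer1}, and the multiplicity of $\rho_0$ being exactly one rests on \cite[Proposition 3.22]{NgRWW}; none of these inputs appears in your plan.
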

 \begin{proof}Let $\D:=\boxtimes_{\sigma\in\Gal(\FQ(u)/\FQ)}\C^\sigma$. Since $\C$ is not  pointed, the inequality (\ref{NormrankIneq}) and \cite[Remark 4.2.3]{O3} imply
  $\rank(\C)\leq p-1$; so we can assume $p\geq5$.  If $[\FQ(u):\FQ]=2$, then $N(\dim(\C))=p^2$,   $p=5$  and $\dim(\C)=\frac{5(3\pm\sqrt{5})}{2}$ by Lemma \ref{degd=2}. Assume $[\FQ(u):\FQ]>2$ and $p>5$ below. By Remark \ref{numGalcon}, we know $\ord(t_\C)=p$ and at least one of the Galois orbits of simple objects of $\C$ contains $\frac{p-1}{2}$ simple objects. Meanwhile $\C$ can not be a transitive modular fusion category \cite[Theorem I]{NgWaZh}, therefore,  $\rank(\C)>\frac{p-1}{2}$. 

Assume $\rho_\C$ is the modular representation associated to $\C$, then $\rho_\C$ is a direct sum of level $p$ irreducible representations of $\SL(2,\Z)$. However, $\rho_\C$ can't be decomposed as direct sum of irreducible representation of disjoint $\hxt$-spectrum \cite[Lemma 3.18]{BNRW2} and $\rho_\C$ also can't decomposed into direct sum of one-dimensional representations
\cite[Lemma 4]{Eholzer2}. Therefore, by comparing the dimensions of level $p$ irreducible representations of $\SL(2,\Z)$, we obtain that either $\rho_\C$ is an irreducible representation of dimension $p-1$ or $\frac{p+1}{2}$, or $\rho$ is a direct sum of two irreducible representations of dimension $\frac{p-1}{2}$, or $\rho$ is a direct sum of one-dimensional representations and an irreducible representation of dimension $\frac{p+1}{2}$.

If $\rho_\C\cong\rho_1\oplus\rho_2$ with $\dim(\rho_1)=\dim(\rho_2)=\frac{p-1}{2}$, since the $\hxt$-spectrums  of  $\rho_1$ and $\rho_2$ intersect non-trivially \cite[Lemma 3.18]{BNRW2}, $\rho_1=\rho_2$, which is impossible by \cite[Lemma 5.2.2]{PSYZ}. And the irreducible representations of dimension $\frac{p+1}{2}$ can't be realized as representations of modular fusion categories \cite{Eholzer1}. Therefore, $\rho_\C$ is an irreducible representation of dimension $p-1$ or   $\rho_\C=\rho_0\oplus\rho_1$ by \cite[Lemma 3.20, Proposition 3.22]{NgRWW}, where $\rho_0$ is the trivial representation and $\rho_1$ is the irreducible representation of dimension $\frac{p+1}{2}$.

If $\rho_\C$ is irreducible, then $\rank(\C)=p-1$. In particular, the Galois symmetry \cite[Theorem II]{DongLNg} implies that $\Q(\C)$ splits into two orbits and each Galois orbit have exactly $\frac{p-1}{2}$ simple objects. Notice that all formal codegrees of $\C$  are divided by $p$. Indeed, if not, then   $\C$ is a transitive modular fusion category \cite[Theorem 4.4]{Schopieray} and
 \begin{align*}
 \C\cong\C(\ssl_2,p-2)^\sigma_\text{ad}~ \text{ for some $\sigma\in\Gal(\FQ(\zeta_p)/\FQ)$},
  \end{align*}
  however,  $\rank(\C(\ssl_2,p-2))=\frac{p-1}{2}$, it is a contradiction.  Meanwhile, $p^2\nmid f$ for any formal codegree $f$ of $\C$, otherwise, $p^2\mid\dim(\C)$, which implies $p\mid u$, it is impossible by
    \cite[Proposition 4.2]{Schopieray}. Thus, for any formal codegree $f$ of $\C$, we have $f=p\cdot u_f$, where $u_f\in\FQ(\dim(X),X\in\Q(\C))$ is a totally positive algebraic integer as $f\mid \dim(\C)$. Since $f=\frac{\dim(\C)}{\dim(X)^2}$ for some simple object $X\in\Q(\C)$, we see   $u_X:=u_f=\frac{u}{\dim(X)^2}$.

    By definition,
 \begin{align*}
 p\cdot u=\dim(\C)=\sum_{X\in\Q(\C)}\dim(X)^2=\sum_{X\in\Q(\C)}\frac{u}{u_X},
 \end{align*}
 and for any $\sigma\in\Gal(\FQ(\zeta_p)^+/\FQ)$, $\sigma\left(\frac{\dim(\C)}{\dim(X)^2}\right)= \frac{\dim(\C)}{\dim(\hat{\sigma}(X))^2}$, therefore,
 \begin{align*}
 \dim(\hat{\sigma}(X))^2=\frac{u\sigma(\dim(X)^2)}{\sigma(u)}=\frac{u}{\sigma(u_X)}, ~\text{and}~\sigma(u_X)=\frac{u}{\dim(\hat{\sigma}(X))^2}=u_{\hat{\sigma}(X)}.
 \end{align*}
 Thus, we have the following equation
 \begin{align*}
 \sum_{\sigma\in\Gal(\FQ(\zeta_p)^+/\FQ) }\dim(\hat{\sigma}(X))^2= \sum_{\sigma\in\Gal(\FQ(\zeta_p)^+/\FQ) }\frac{u}{\sigma(u_X)}.
 \end{align*}
Hence, let $\Q(\C)=\Gamma_1\cup\Gamma_2$, where $\Gamma_i:={\{\hat{\sigma}(X_i)|\sigma\in\Gal(\FQ(u_{X_i})/\FQ) }\}$, then
 \begin{align*}
 p=\sum^2_{i=1}\sum_{\sigma\in\Gal(\FQ(u_{X_i})/\FQ) }\frac{1}{\sigma(u_{X_i})}.
 \end{align*}
 Since $u_I=u\neq1$,  the Siegel' trace theorem \cite{Siegel}
 says that
 \begin{align*}
 \sum_{\sigma\in\Gal(\FQ(u_{X_i})/\FQ) }\frac{1}{\sigma(u_{X_i})}>1.79\cdot\frac{p-1}{2},
 \end{align*}
except for roots of the equation $x^3-5x^2+6x-1=0$ when $p=7$ \cite[Theorem 1.1]{LM}. If $u_{X_i}=1$, then $\sum_{\sigma\in\Gal(\FQ(u_{X_i})/\FQ) }\frac{1}{\sigma(u_{X_i})}=\frac{p-1}{2}$. Hence,
\begin{align*}
p\geq \frac{5}{3}\cdot\frac{p-1}{2}+\frac{p-1}{2},
  \end{align*}
that is, $p\leq3$, it is impossible.

When $\rho_\C=\rho_0\oplus\rho_1$, we deduce from Theorem \ref{rank(p+3)/2} that $\C$ is Grothendieck equivalent to the modular fusion category $\C(\ssl_2,2(p-1))_A^0$. This finishes the proof of the corollary.
\end{proof}
\begin{coro}
 Let $\C$ be a modular fusion category such that $N(\dim(\C))=p^3$ and $d_{\dim(\C)}=3$. Then $p=7$ and $\C$ is Grothendieck equivalent to  $\C(\ssl_2,12)_A^0$.
\end{coro}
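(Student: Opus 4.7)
The plan is to apply Corollary \ref{ptimesunit}: I will verify that $\dim(\C) = p\cdot u$ with $u\neq 1$ a totally positive algebraic unit in $\FQ(\zeta_p)^+$, conclude that $\C$ is Grothendieck equivalent to $\C(\ssl_2,2(p-1))_A^0$, and then read off $p=7$ from the degree condition $d_{\dim(\C)}=3$.

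To produce the factorization, I use that $\dim(\C)$ is a $d$-number (by \cite[Theorem 1.2]{O1}) of degree $3$ over $\FQ$ with $N(\dim(\C))=p^3$. Its minimal polynomial thus has the form $x^3+a_1x^2+a_2x-p^3$, and the $d$-number condition $(-p^3)^j\mid a_j^3$ forces $p\mid a_1$ and $p^2\mid a_2$. Writing $a_1=pc_1$, $a_2=p^2c_2$, substituting $x=py$ and dividing by $p^3$ yields $y^3+c_1y^2+c_2y-1=0$, so $u:=\dim(\C)/p$ is an algebraic integer whose minimal polynomial has constant term $-1$, hence a unit. Total positivity is inherited from $\dim(\C)$, and $u\neq 1$ since otherwise $d_{\dim(\C)}=1$.

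To place $u$ in $\FQ(\zeta_p)^+$, Cauchy's theorem \cite[Theorem 3.9]{BNRW1} gives $\ord(T_\C)=p^n$ for some $n\geq 1$, so $\FQ(\dim(\C))\subseteq\FQ(T_\C)^+=\FQ(\zeta_{p^n})^+$. The group $\Gal(\FQ(\zeta_{p^n})^+/\FQ)$ is cyclic of order $p^{n-1}(p-1)/2$, so has a unique subfield of each degree dividing this order; the existence of a degree-$3$ subfield forces $3\mid (p-1)/2$ (as $p\geq 5$), hence $6\mid p-1$, and this unique degree-$3$ subfield already sits inside $\FQ(\zeta_p)^+$ because the intermediate Galois group $\Gal(\FQ(\zeta_{p^n})^+/\FQ(\zeta_p)^+)$ has order $p^{n-1}$ coprime to $3$. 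Therefore $\dim(\C)\in\FQ(\zeta_p)^+$, and so is $u$.

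By Corollary \ref{ptimesunit}, $\C$ is Grothendieck equivalent to $\C(\ssl_2,2(p-1))_A^0$. Example \ref{example} identifies the non-integer formal codegrees of this category with the Galois conjugates of $\frac{p}{4\cos^2(d\pi/p)}=\frac{p}{2-2\cos(\pi/p)}$ (with $d=(p+1)/2$), each generating $\FQ(\zeta_p)^+$ and hence of degree $(p-1)/2$ over $\FQ$. Since Grothendieck-equivalent modular fusion categories share the same $S$-matrix up to the action of $\Gal(\overline{\FQ}/\FQ)$, $\dim(\C)$ is one of these Galois conjugates; combined with $d_{\dim(\C)}=3$ this gives $(p-1)/2=3$, i.e., $p=7$, and $\C$ is Grothendieck equivalent to $\C(\ssl_2,12)_A^0$. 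The main obstacle is the cyclotomic step, in particular ruling out additional degree-$3$ subfields at higher levels $n\geq 2$, which reduces to the uniqueness of subgroups in the cyclic Galois group; the $d$-number manipulation and the final degree comparison are routine.
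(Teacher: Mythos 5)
Your proposal is correct and follows the route the paper intends: verify that $\dim(\C)=p\cdot u$ with $u\neq1$ a totally positive unit in $\FQ(\zeta_p)^+$ (your $d$-number factorization and the unique-cubic-subfield argument inside $\FQ(\zeta_{p^n})^+$ do exactly the work the paper delegates to \cite{Schopieray} and Cauchy's theorem), then invoke Corollary \ref{ptimesunit} and compare degrees of formal codegrees to force $\frac{p-1}{2}=3$. The only cosmetic point is the justification of the last step: rather than ``same $S$-matrix up to Galois,'' it is cleaner to say that formal codegrees are invariants of the Grothendieck ring and $\dim(\C)$ is one of them, so by Example \ref{example} it is either $p$ or a degree-$\frac{p-1}{2}$ conjugate of $\frac{p}{4\cos^2(d\pi/p)}$.
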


\section{Modular   fusion category of global dimension $p^2$}\label{section4}
In this  section, we always assume $p$ is a prime and we study the structure of modular fusion categories $\C$ of global dimension $p^2$.

\begin{prop}Let $\C$ be a pre-modular fusion category of global dimension $p^2$. If $\C'\ncong\vvec$, then either $\C$ is pointed or $\C\cong\C(\ssl_2,3)_\text{ad}\boxtimes\C(\ssl_2,3)_\text{ad}^\sigma\boxtimes\text{Rep}(\Z_5)$, where $\sigma\in\Gal(\FQ(\zeta_5)/\FQ)$ such that $\sigma(\zeta_5)=\zeta_5^2$.
\end{prop}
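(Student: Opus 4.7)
The plan is to analyze the M\"{u}ger center $\C'$ via Deligne's theorem and then recover $\C$ from its modularization as a trivial extension by $\text{Rep}(\Z_p)$. Since $\C'$ is a non-trivial symmetric fusion subcategory, Deligne's theorem writes $\C'=\text{Rep}(G,u)$ for a finite group $G$ and central involution $u\in G$. The integer $\dim(\C')=|G|$ divides $\dim(\C)=p^2$, so $|G|\in\{p,p^2\}$; for $p$ odd, the super-Tannakian case is ruled out since $\mathrm{sVec}\subseteq\C'$ would force $2\mid|G|$, so $\C'=\text{Rep}(G)$ is Tannakian. The small primes $p=2,3$ are addressed directly from the classification results cited in the introduction, so I focus on $p\geq5$ below.

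If $|G|=p^2$, then the modularization is trivial and $\C=\C'=\text{Rep}(G)$ is symmetric; since every group of order $p^2$ is abelian, $\C$ is pointed. The substantive case is $|G|=p$, where $\C'=\text{Rep}(\Z_p)$ and the modularization $\overline{\C}$ (the category of local modules over the regular algebra of $\C'$) is a modular fusion category of dimension $\dim(\C)/|G|=p$. By Schopieray's classification of (pre-)modular categories of prime dimension cited in the introduction, $\overline{\C}$ is pointed unless $p=5$, in which case $\overline{\C}$ is either pointed $\C(\Z_5,\eta)$ or braided equivalent to the non-pointed $\C(\ssl_2,3)_\ad\boxtimes\C(\ssl_2,3)_\ad^\sigma$ of global dimension $\frac{5+\sqrt{5}}{2}\cdot\frac{5-\sqrt{5}}{2}=5$.

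The main obstacle is reconstructing $\C$ itself from $\overline{\C}$. Since $\C'$ is Tannakian, $\C$ arises as an equivariantization $\overline{\C}^{\Z_p}$ under some $\Z_p$-action by braided autoequivalences (the putative $\Z_p$-crossed grading collapses to its trivial component because the identity $\dim(\C)=|G|\cdot\dim(\overline{\C})$ already exhausts the total dimension). To conclude I would bound $\text{Aut}_{br}(\overline{\C})$ directly in each case: for pointed $\overline{\C}=\C(\Z_p,\eta)$, $\text{Aut}_{br}(\overline{\C})$ is a subgroup of the $\eta$-preserving automorphisms of $\Z_p$ and thus has order dividing $p-1$; for $\overline{\C}=\C(\ssl_2,3)_\ad\boxtimes\C(\ssl_2,3)_\ad^\sigma$, each Fibonacci factor admits only the identity as a braided autoequivalence (its unique non-unit simple is self-dual with rigid fusion rules), and the two factors cannot be exchanged because their categorical dimensions $\frac{5\pm\sqrt{5}}{2}$ differ, giving $\text{Aut}_{br}(\overline{\C})=1$. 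In both cases no element of order $p$ is available, so the $\Z_p$-action is trivial and $\C\cong\overline{\C}\boxtimes\text{Rep}(\Z_p)$, which is pointed when $\overline{\C}$ is pointed and equals the exceptional triple Deligne product when $p=5$ and $\overline{\C}$ is the Fibonacci product.
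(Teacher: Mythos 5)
Your overall strategy coincides with the paper's (bound $\dim(\C')$, rule out the super-Tannakian case for odd $p$, modularize along $\text{Rep}(\Z_p)$, and quote Schopieray's prime-dimension classification); the reconstruction step you add on top of this is where there is a genuine error. A categorical action of $\Z_p$ on the modularization $\overline{\C}$ is \emph{not} determined by the induced homomorphism from $\Z_p$ to the group of isomorphism classes of braided autoequivalences: actions whose underlying homomorphism is trivial are still classified by $H^2(\Z_p,\text{Aut}_\otimes(\text{id}_{\overline{\C}}))$, and when $\overline{\C}=\C(\Z_p,\eta)$ is pointed one has $\text{Aut}_\otimes(\text{id}_{\overline{\C}})\cong\text{Hom}(\Z_p,\FC^\times)\cong\Z_p$, so this cohomology group is $\Z_p\neq 0$. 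Hence your inference ``no element of order $p$ in $\text{Aut}_{br}(\overline{\C})$, so the $\Z_p$-action is trivial and $\C\cong\overline{\C}\boxtimes\text{Rep}(\Z_p)$'' fails in the pointed branch. Concretely, the pointed pre-modular category $\C(\Z_{p^2},q)$ with $q(x)=\zeta_p^{x^2}$ has M\"{u}ger center exactly the Tannakian subcategory $\text{Rep}(\Z_p)$ supported on $p\Z_{p^2}$ (the radical of the associated bilinear form, on which $q$ and the twist are trivial), its modularization is the pointed modular category $\C(\Z_p,\bar q)$ of dimension $p$, yet its group of invertibles is cyclic of order $p^2$, so it is not braided equivalent to any Deligne product $\overline{\C}\boxtimes\text{Rep}(\Z_p)$. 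This is exactly the nontrivial $H^2$-twist your argument overlooks.

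The damage is confined to that branch, and the proposition itself is unharmed: there one only needs that $\C$ is pointed, which can be recovered without the product decomposition. Since the image of $\Z_p$ in the group of isomorphism classes of braided autoequivalences of $\C(\Z_p,\eta)$ is trivial (that group has order dividing $p-1$, as you note), every simple object of $\overline{\C}$ is fixed by the action; the obstruction to an equivariant structure lies in $H^2(\Z_p,\FC^\times)=0$ and cyclic groups have only one-dimensional irreducible projective representations, so every simple object of $\C\cong\overline{\C}^{\,\Z_p}$ is invertible and $\C$ is pointed (though possibly, as above, with invertible group $\Z_{p^2}$). In the one case where the Deligne-product form is actually asserted by the proposition, namely $p=5$ and $\overline{\C}\cong\C(\ssl_2,3)_\text{ad}\boxtimes\C(\ssl_2,3)_\text{ad}^\sigma$, your argument does go through, because there $\text{Aut}_\otimes(\text{id}_{\overline{\C}})$ is trivial (the universal grading group of the Fibonacci factors is trivial), so triviality of the homomorphism really does force the categorical action, hence the equivariantization, to be trivial and $\C\cong\overline{\C}\boxtimes\text{Rep}(\Z_5)$; this vanishing should be stated explicitly, since it, and not the $\text{Aut}_{br}$ bound alone, is what closes the argument. (Two smaller points: the divisibility $\dim(\C')\mid p^2$ should be attributed to the cited result on subcategory dimensions rather than asserted, and the paper's own proof simply cites the modularization and classification results without the reconstruction discussion, so your added detail is welcome once corrected.)
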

\begin{proof}Since $\C'\ncong\vvec$, $\dim(\C')=p,p^2$ by \cite[Theorem 3.1]{Yu}. If $\dim(\C')=p^2$, then $\C$ is symmetric, so it is pointed \cite[Proposition 8.32]{ENO1}. If $\dim(\C')=p$, then $\C'=\text{Rep}(\Z_p)$ is Tannakian,  or $\C'=\text{sVec}$ and $p=2$. When $p=2$, it is obviously $\C$ is pointed \cite[Example 5.1.2]{O3}. If $\C'=\text{Rep}(\Z_p)$, then $\C_{\Z_p}$ is a modular fusion category of dimension $p$, hence $\C_{\Z_p}$ is pointed or $\C_{\Z_p}\cong\C(\ssl_2,3)_\text{ad}\boxtimes\C(\ssl_2,3)_\text{ad}^\sigma$ by
\cite[Theorem 5.12]{Schopieray}. Therefore, $\C$ is a pointed fusion category or $\C\cong\C(\ssl_2,3)_\text{ad}\boxtimes\C(\ssl_2,3)_\text{ad}^\sigma\boxtimes\text{Rep}(\Z_5)$.
\end{proof}


\begin{prop}\label{intdimfsb}Let $\D$ be a non-trivial fusion subcategory of $\C$. If $\dim(\D)\in\Z$, then either $\C$ is pointed, or $\C\cong\C(\ssl_2,2)$, or $\C\cong\C(\ssl_2,3)_\text{ad}\boxtimes\C(\ssl_2,3)_\text{ad}
\boxtimes\C(\ssl_2,3)_\text{ad}^\sigma\boxtimes\C(\ssl_2,3)_\text{ad}^\sigma$, or $\C\cong\C(\Z_5,\eta)\boxtimes \C(\ssl_2,3)_\text{ad}
\boxtimes\C(\ssl_2,3)_\text{ad}^\sigma$, where $\sigma\in\Gal(\FQ(\zeta_5)/\FQ)$ such that $\sigma(\zeta_5)=\zeta_5^2$.
\end{prop}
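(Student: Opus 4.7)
The plan is to apply M\"uger's centralizer formula to constrain $\dim(\D)$ and then carry out a case analysis. Since $\C$ is modular and $\D\subseteq\C$ is a fusion subcategory, M\"uger's theorem yields $\dim(\D)\cdot\dim(\D'_\C)=\dim(\C)=p^2$; combined with $\dim(\D)\in\Z_{>1}$ and the fact that $\dim(\D'_\C)$ is a positive algebraic integer, this forces $\dim(\D)\in\{p,p^2\}$. In the case $\dim(\D)=p^2$ one has $\D=\C$, so $\C$ itself has integer global dimension $p^2$: for $p=2$, \cite[Example 5.1.2]{O3} forces $\C$ pointed or $\C\cong\C(\ssl_2,2)$, while for odd $p$ known classifications (modular fusion categories of prime-square integer dimension are nilpotent, hence pointed) close this case.

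Assume henceforth $\dim(\D)=p$, so $\dim(\D'_\C)=p$ as well. The M\"uger center $\D'_\D=\D\cap\D'_\C\subseteq\D$ has dimension dividing $p$, so $\D$ is either modular ($\D'_\D=\vvec$) or symmetric ($\D'_\D=\D$). If $\D$ is modular, then \cite[Theorem 3.13]{DrGNO2} gives a Deligne factorization $\C\cong\D\boxtimes\D'_\C$, a product of two modular fusion categories of prime dimension $p$. By \cite{Schopieray}, modular fusion categories of prime dimension $p\neq 5$ are always pointed, so $\C$ is pointed in that range. For $p=5$, \cite[Theorem 5.12]{Schopieray} says each of $\D,\D'_\C$ is either $\C(\Z_5,\eta)$ or $\C(\ssl_2,3)_\ad\boxtimes\C(\ssl_2,3)_\ad^\sigma$; enumerating the three non-pointed pairings produces exactly the two remaining products listed in the statement, with the fourth combination giving $\C$ pointed.

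If $\D$ is symmetric, then $\D$ is Tannakian and equivalent to $\text{Rep}(\Z_p)$ (or to $\text{sVec}$ when $p=2$), and since $\dim(\D)^2=p^2=\dim(\C)$, $\D$ is a Lagrangian subcategory of $\C$. Invoking the structural result that a modular fusion category containing a Lagrangian Tannakian subcategory $\text{Rep}(G)$ is braided equivalent to a Drinfeld center $\mathcal{Z}(\E)$ with $\E$ a fusion category of FP-dimension $|G|$, combined with the fact that any fusion category of prime dimension is pointed, one obtains $\C\cong\mathcal{Z}(\vvec_{\Z_p}^\omega)$ for some 3-cocycle $\omega$; since $\Z_p$ is abelian, this center is pointed modular, so $\C$ is pointed. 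The super-Tannakian case $p=2$, $\D=\text{sVec}$ yields $\C$ pointed or $\C\cong\C(\ssl_2,2)$ by the parallel argument. The main technical obstacle is this symmetric subcase, as it requires the Drinfeld-center classification of modular categories with Lagrangian Tannakian subcategories; by contrast, the modular subcase is a routine enumeration matching the stated Deligne products.
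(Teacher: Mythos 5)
For the cases the proposition actually needs, your argument is essentially the paper's: $\dim(\D)=p$; if $\D$ is modular, then $\C\cong\D\boxtimes\D_\C'$ by \cite[Theorem 3.13]{DrGNO2} and the classification of prime-dimensional modular categories in \cite{Schopieray} yields exactly the listed possibilities; if $\D$ is symmetric Tannakian, it is Lagrangian, so $\C\cong\Y(\vvec^\omega_{\Z_p})$ by \cite[Theorem 4.64]{DrGNO2}, which is pointed; and the $\D\cong\text{sVec}$ case has $\dim(\C)=4$.

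The genuine problem is your extra case $\dim(\D)=p^2$, i.e.\ $\D=\C$, which you close by asserting that modular fusion categories of integer global dimension $p^2$ are nilpotent, hence pointed. That assertion is false: $\C(\sso_5,\frac{3}{2})_\ad$ is a simple non-pointed modular category of global dimension $9$, and this paper itself exhibits non-pointed modular categories of global dimensions $25$, $49$ and $121$ (note also that integer global dimension does not force weak integrality, so the Frobenius--Perron nilpotency theorems do not even apply); in fact, whether a simple non-pointed modular category of dimension $p^2$ exists for $p>3$ is precisely the open Question \ref{quessimpp^2}, so no known classification can dispose of that case. The proposition is meant for proper subcategories --- that is how it is applied in Theorem \ref{nonsimplep^2}, and the paper's proof starts directly from $\dim(\D)=p$ via \cite[Theorem 3.1]{Yu} --- so the correct fix is to exclude $\D=\C$ rather than to settle it with a non-theorem. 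Two smaller repairs: in the sVec subcase there is no ``parallel argument'' to \cite[Theorem 4.64]{DrGNO2} for a super-Tannakian Lagrangian; argue as the paper does, from $\dim(\C)=4$ and \cite[Example 5.1.2]{O3}. Also, $\Y(\vvec_G^\omega)$ need not be pointed for abelian $G$ in general; what makes it pointed here is that $\Z_p$ is cyclic, so $H^2(\Z_p,\FC^*)$ is trivial and all simple objects of the center are invertible.
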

\begin{proof}
Since $\D$ is a non-trivial fusion subcategory of $\C$ with integral global dimension, by \cite[Theorem 3.1]{Yu}  $\dim(\D)=p$. Then we consider the global dimension of the M\"{u}ger center $\D'$ of $\D$, \cite[Theorem 3.1]{Yu} again implies that either $\D=\D'$ is symmetric or $\D$ is a  modular fusion category. If $\D$ is modular, then $\D\cong \C(\ssl_2,3)_\text{ad}
\boxtimes\C(\ssl_2,3)_\text{ad}^\sigma$ or $\D$ is pointed by \cite[Theorem 5.12]{Schopieray}. Notice that $\C\cong\D\boxtimes\D_\C'$ as modular fusion category \cite[Theorem 3.13]{DrGNO2}, so $\D_\C'$ is also a modular fusion category of dimension $p$, thus the structure of $\C$ is  known. If $\D$ is symmetric, then either it is a Tannakian fusion category or $\D\cong\text{sVec}$. In the first case,  $\D$ must be a Lagrange fusion category as $\D\subseteq\D_\C'$ and $\dim(\D)\dim(\D_\C')=p^2$ \cite[Theorem 3.10]{DrGNO2}, which implies $\C\cong\Y(\text{Vec}^\omega_{\Z_p})$ by \cite[Theorem 4.64]{DrGNO2}, where $\omega\in Z^3(\Z_p,\FC^*)$ is a $3$-cocycle; in the second case  $\dim(\C)=4$, then $\C\cong\C(\ssl_2,2)$ is an Ising category or $\C$ is pointed \cite[Example 5.1.2]{O3}.
\end{proof}

 Let $\C$ be a modular fusion category of global dimension $p^2$, then $\text{ord}(T_\C)$ divides $p^5$ \cite[Corollary 8.18.2]{EGNO}. Since the structures of modular fusion categories of global dimension $4$ and $9$ are known \cite{O3,Yu}, we always assume $p\geq5$ below. Let $\text{ord}(t_\C)=p^n$ where $t$ is the normalized $T$-matrix of $\C$, since $\rank(\C)\leq p^2-1$ \cite[Lemma 4.2.2]{O3},  $n\leq2$ by Remark \ref{numGalcon}. In particular, the  number of simple objects in each Galois orbit is less than or equal to $|\Gal(\FQ(\zeta_{p^2})/\FQ)^2|=\frac{p^2-p}{2}$.

 Assume that $\C$ does not contain a non-trivial fusion subcategory of integer global dimension below, and let $\D$ be a non-trivial modular fusion subcategory of $\C$. Note that we can assume that $\D$ to be a simple modular fusion category and that $\C$ can be  decomposed as Deligne product of simple modular fusion subcategories. Indeed, since $\D'$ is an integral fusion subcategory, $\D'=\vvec$, hence fusion subcategories of $\C$ are modular.
Assume $\dim(\D)$ be the largest among its Galois conjugates, otherwise we can replace $\D$ with one of its Galois conjugates $\D^\sigma$.

 \begin{theo}\label{nintdimfsb}Let $\C$ be a non-simple modular fusion category of dimension $p^2$, where $p>3$. If $\C$ does not  contain a  non-trivial fusion subcategory with integer global dimension, then $\C$ contains a modular fusion subcategory that is braided equivalent to a Galois conjugate of $\C(\ssl_2,p-2)_\text{ad}$.
 \end{theo}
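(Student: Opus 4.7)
The plan is to use the decomposition $\C=\D_1\boxtimes\cdots\boxtimes\D_k$ with $k\geq 2$ into simple modular fusion subcategories provided by the paragraph preceding the theorem, and to apply Lemma~\ref{P^mdegd>2} to a carefully chosen factor. Since $\C$ contains no non-trivial integer-dimensional fusion subcategory, each $d_i:=[\FQ(\dim(\D_i)):\FQ]\geq 2$.

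First I establish a norm identity. Let $L$ denote the Galois closure over $\FQ$ of $\FQ(\dim(\D_1),\dots,\dim(\D_k))$. Taking $N_{L/\FQ}$ of the equation $\prod_i\dim(\D_i)=p^2$, and using multiplicativity of the norm together with the fact that each $\dim(\D_i)$ is a totally positive algebraic integer, one finds that each $N_i:=N_{\FQ(\dim(\D_i))/\FQ}(\dim(\D_i))$ is a positive integer power of $p$, say $N_i=p^{a_i}$, subject to $\sum_{i=1}^{k} a_i/d_i=2$. Choosing $i_0$ to minimize $a_i/d_i$ then yields $a_{i_0}/d_{i_0}\leq 2/k\leq 1$.

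If $a_{i_0}<d_{i_0}$, applying Lemma~\ref{P^mdegd>2} to the simple modular fusion category $\D_{i_0}$ directly shows that $\D_{i_0}$ is braided equivalent to a Galois conjugate of $\C(\ssl_2,p-2)_\ad$, furnishing the desired subcategory.

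The main obstacle is the borderline case $a_{i_0}=d_{i_0}$, in which Lemma~\ref{P^mdegd>2} does not apply. From $\sum a_i/d_i=2$ and the minimality of $i_0$, one is forced into $k=2$ with $a_1/d_1=a_2/d_2=1$. Since $p$ is totally ramified in $K:=\FQ(\dim(\D_i))\subseteq\FQ(\zeta_{p^n})^+$, every element of $\mathcal{O}_K$ of norm $p^{d_i}$ generates the ideal $(p)$, so $\dim(\D_i)=p\cdot u_i$ for a totally positive algebraic unit $u_i$, with $u_1u_2=1$ and $u_i\neq 1$ (else $\dim(\D_i)=p\in\Z$ would violate the non-integer hypothesis). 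Corollary~\ref{ptimesunit} then forces both $\D_1$ and $\D_2$ to be Grothendieck equivalent to $\C(\ssl_2,2(p-1))_A^0$. For $p=5$, Example~\ref{example} identifies this Grothendieck ring as $\Gr(\C(\ssl_2,3)_\ad)^{\otimes 2}$, which contains a non-trivial based subring and hence contradicts the simplicity of $\D_i$. For $p\geq 7$, since $\dim(\D_i)$ must be a Galois conjugate of the formal codegree $p/(4\sin^2(\pi/(2p)))$ of $\C(\ssl_2,2(p-1))_A^0$, the constraint $u_1u_2=1$ reduces to an identity of the form $16\sin^2(k\pi/(2p))\sin^2(\pi/(2p))=1$ for some odd $k$; this is impossible because $\sin(\pi/(2p))<1/4$ whenever $p\geq 7$, forcing the left-hand side strictly below $1$. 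Ruling out this boundary case completes the proof.
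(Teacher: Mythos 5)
Your proposal follows essentially the same route as the paper: decompose $\C$ into simple modular factors, apply Lemma \ref{P^mdegd>2} to a factor whose degree exceeds its norm exponent, and in the remaining case force both factors to have dimension $p\cdot(\text{unit})$, invoke Corollary \ref{ptimesunit} to make them Grothendieck equivalent to $\C(\ssl_2,2(p-1))_A^0$, and rule that configuration out. Your bookkeeping via $\sum_i a_i/d_i=2$ is a clean way to organize the case split, and two of your steps are actually sharper than the paper's: the estimate $16\sin^2(k\pi/(2p))\sin^2(\pi/(2p))\leq 16\sin^2(\pi/(2p))<1$ for $p\geq7$ makes explicit why the conjugacy equation only has a solution at $p=5$ (the paper merely asserts this), and at $p=5$ you get a contradiction directly from the non-simplicity of $\Gr(\C(\ssl_2,8)_A^0)$ rather than via Proposition \ref{norm=5^3}.

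There is, however, one genuine gap. Corollary \ref{ptimesunit} has the hypothesis that the totally positive unit lies in $\FQ(\zeta_p)^+$, whereas your ramification argument only yields $\dim(\D_i)=p\,u_i$ with $u_i\in\FQ(\zeta_{p^n})^+$ for some $n$ (a priori $n$ can be as large as allowed by $\ord(T_\C)\mid p^5$), and you apply the corollary without checking this. The paper closes exactly this point before citing the corollary: if $\dim(\D_i)\notin\FQ(\zeta_p)^+$, then $[\FQ(\dim(\D_i)):\FQ]$ is divisible by $p$, so the Galois orbit of the unit object of each factor contains at least $p$ simple objects, forcing $\rank(\C)=\rank(\D_1)\rank(\D_2)\geq p^2$, which contradicts $\rank(\C)\leq p^2-1$ for the non-pointed $\C$ (Remark \ref{numGalcon} and \cite[Remark 4.2.3]{O3}). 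You need to insert this (or an equivalent) argument. A smaller presentational point: your identity ``$16\sin^2(k\pi/(2p))\sin^2(\pi/(2p))=1$ for some odd $k$'' tacitly replaces $\C$ by a Galois conjugate so that one factor attains the maximal conjugate dimension $p/(4\sin^2(\pi/(2p)))$; this is legitimate because the hypothesis and conclusion of the theorem are Galois-stable (and the paper makes the same reduction), but it should be stated explicitly.
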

 \begin{proof}
 Let $\D\subseteq\C$ be a simple modular fusion subcategory with $\dim(\D)\notin\Z$, then $\C\cong\D\boxtimes\D_\C'$ and $\D_\C'$ is also a modular fusion category  by \cite[Theorem 3.13]{DrGNO2}. By Lemma \ref{P^mdegd>2}, it suffices to show that $\C$ contains a non-trivial fusion subcategory whose global dimension is  not divided by $p$ in sense of algebraic integers. If $p\nmid\dim(\D)$, then we are done; assume $\dim(\D)=p a_\D$ for some totally positive algebraic integer $a_\D$ below.
If $\D_\C'$ is not simple, let $\D_\C'\cong\A\boxtimes\B$, where $\A,\B$ are non-trivial modular fusion subcategories, obviously $p$ can't divide both $\dim(\A)$ and $\dim(\B)$, thus the argument of Lemma \ref{P^mdegd>2} says that either $\A$ or $\B$ is a  transitive modular fusion category, so $\C$ contains  a Galois conjugate of $\C(\ssl_2,p-2)_\text{ad}$ \cite[Theorem 1.1]{NgWaZh}.

Assume that $\D_\C'$ is a simple modular fusion category such that $\dim(\D_\C')=pb_\D$, $a_\D$  and $b_\D$ are  totally positive  algebraic units with $1=a_\D b_\D$. Notice that $a_\D\in\FQ(\zeta_p)^+$, if not, each Galois orbit of the unit objects $I_\D$ and $I_{\D_\C'}$ has at least $p$ simple objects, then $\rank(\C)\geq p^2$, it is impossible.
By  Corollary \ref{ptimesunit},  $\D$ and $\D_\C'$ are Grothendieck equivalent to modular fusion category $\C(\ssl_2,2(p-1))_A^0$. Up to Galois conjugates, we can assume  $\dim(\D)=\frac{p}{4\cos^2(\frac{d\pi}{p})}$ and $\dim(\D_\C')=4p\cos^2(\frac{d\pi}{p})$ with $d=\frac{p+1}{2}$.

If there exists a $\sigma\in\Gal(\FQ(\zeta_p)/\FQ)$ such that
\begin{align*}
\sigma\left(\frac{p}{4\cos^2(\frac{d\pi}{p})}\right)=\dim(\D_\C')=4p\cos^2(\frac{d\pi}{p}),
 \end{align*}then  $p=5$; in addition, when $p=5$ we have a braided tensor equivalence \begin{align*}
\D\cong\C(\ssl_2,3)_\ad\boxtimes\C(\ssl_2,3)_\ad\quad \text{or}\quad \D\cong\C(\ssl_2,3)_\ad^\sigma\boxtimes\C(\ssl_2,3)_\ad^\sigma
 \end{align*}by Theorem \ref{normcases}, where $\sigma(\zeta_5)=\zeta_5^2$. However, $\C$ must contains a fusion subcategory of dimension $5$ by Proposition \ref{norm=5^3}, which contradicts to the assumption. Therefore, $\C$ contains a modular fusion subcategory that is braided equivalent to a Galois conjugate of $\C(\ssl_2,p-2)_\text{ad}$.
\end{proof}


 \begin{lemm}\label{function}Let $f(x)=\frac{4\pi\sin^2(x)}{x}$, where $x\in(0,\frac\pi5]$. Then $f(x)$ is an increasing function.
\end{lemm}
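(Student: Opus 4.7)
The plan is a direct calculus computation. Writing $f(x) = \frac{4\pi\sin^2(x)}{x}$ and applying the quotient rule yields
\[
f'(x) \;=\; \frac{4\pi\sin(x)\bigl(2x\cos(x) - \sin(x)\bigr)}{x^{2}}.
\]
Since $\sin(x)>0$, $\cos(x)>0$, and $x^{2}>0$ on $(0,\pi/5]$, the sign of $f'(x)$ coincides with the sign of $2x\cos(x)-\sin(x)$, equivalently with the sign of $2x-\tan(x)$. So it suffices to verify the inequality $\tan(x) < 2x$ throughout $(0,\pi/5]$.

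To obtain this, I would introduce $h(x) := \tan(x) - 2x$. Then $h(0)=0$ and $h'(x) = \sec^{2}(x) - 2$. The inequality $\sec^{2}(x)<2$ is equivalent to $\cos^{2}(x)>1/2$, i.e.\ to $x<\pi/4$. Because $\pi/5<\pi/4$, we have $h'(x)<0$ on the whole interval $(0,\pi/5]$, and combining this with $h(0)=0$ gives $h(x)<0$ there, i.e.\ $\tan(x)<2x$. Plugging back, $f'(x)>0$ on $(0,\pi/5]$, so $f$ is strictly increasing, as claimed.

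The argument is entirely elementary and presents no substantive obstacle. The only point worth emphasising is the choice of endpoint: the hypothesis $x\leq\pi/5$ sits comfortably inside the range $x<\pi/4$ on which the auxiliary function $h$ is monotonically decreasing, and this is precisely what lets the desired estimate $\tan(x) < 2x$ be read off at once from the boundary value $h(0)=0$.
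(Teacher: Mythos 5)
Your proof is correct and follows essentially the same route as the paper: both compute $f'(x)=\frac{4\pi\sin(x)\bigl(2x\cos(x)-\sin(x)\bigr)}{x^{2}}$ and reduce the claim to the positivity of $2x\cos(x)-\sin(x)$ on $(0,\frac\pi5]$. The only (harmless) difference is in verifying that inequality: you divide by $\cos(x)>0$ and show $\tan(x)<2x$ via the monotonicity of $h(x)=\tan(x)-2x$ on $(0,\frac\pi4)$, while the paper studies $g(x)=2x\cos(x)-\sin(x)$ directly through its first and second derivatives.
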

\begin{proof}We have $f'(x)=\frac{4\pi\sin(x)[2x\cos(x)-\sin(x)]}{x^2}$; since $x\in(0,\frac\pi5]$, $\frac{4\pi\sin(x)}{x^2}>0$. Let \begin{align*}
g(x):=2x\cos(x)-\sin(x),\quad x\in[0,\frac\pi5],
\end{align*}
then $g'(x)=\cos(x)-2x\sin(x)$, $g''(x)=-\sin(x)-2x\cos(x)$, hence $g''(x)<0$ for all $x\in[0,\frac\pi5]$, so $g'(x)\geq g'(\frac\pi5)>0$. Thus $g(x)\geq g(0)=0$ for $x\in[0,\frac\pi5]$, which shows $f'(x)>0$ when $x\in(0,\frac\pi5]$. Consequently, $f(x)$ is a strictly increasing function.
\end{proof}

\begin{theo}\label{transubcat}Let $\C$ be a modular fusion category of global dimension $p^2$, where $p>3$ is a prime. If $\C$ contains a modular fusion category that is a Galois conjugate of $\C(\ssl_2,p-2)_\text{ad}$, then $p\leq23$ and $\FQ(S_\C)\subseteq\FQ(T_\C)=\FQ(\zeta_p)$.
\end{theo}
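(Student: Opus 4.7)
The plan is to decompose $\C$ using M\"uger's theorem, identify the dimension of the orthogonal factor, bound $p$ via Ostrik's lower bound on global dimensions combined with Lemma \ref{function}, and then control $\FQ(T_\C)$ through Cauchy's theorem and Remark \ref{numGalcon}.

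First, write the containing subcategory as $\D\cong\C(\ssl_2,p-2)_\ad^\tau$ with $\tau(\zeta_p)=\zeta_p^m$, $\gcd(m,p)=1$. Since $\D$ is modular, \cite[Theorem 3.13]{DrGNO2} yields $\C\cong\D\boxtimes\D_\C'$ with $\D_\C'$ modular, and a direct computation gives
\[
\dim(\D_\C')=\frac{p^2}{\dim(\D)}=4p\sin^2(m\pi/p)\in\FQ(\zeta_p)^+.
\]
Since $\gcd(m,p)=1$, the $\Gal(\FQ(\zeta_p)^+/\FQ)$-orbit of $\dim(\D_\C')$ is $\{4p\sin^2(j\pi/p):1\le j\le (p-1)/2\}$, whose minimum is $4p\sin^2(\pi/p)$. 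None of these conjugates matches the small Fibonacci value $(5-\sqrt{5})/2$ for any prime $p\ge 5$, so Ostrik's bound \cite[Proposition A.1.1]{O3} applies uniformly and forces $4p\sin^2(\pi/p)>\frac{4\sqrt{3}}{5}$. Writing this as $f(\pi/p)>\frac{4\sqrt{3}}{5}$ using Lemma \ref{function} and noting $f$ is strictly increasing on $(0,\pi/5]$ so that $f(\pi/p)$ decreases as $p$ grows, a direct numerical check shows $f(\pi/23)>\frac{4\sqrt{3}}{5}>f(\pi/29)$, whence $p\le 23$.

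Under the Deligne product, $T_\C$ has entries $\theta_{X\boxtimes Y}=\theta_X\theta_Y$, and the twists of $\D$ are $\tau(\zeta_p^{k(k+1)})$ over $0\le k\le (p-3)/2$, which generate $\FQ(\zeta_p)$; hence $\FQ(T_\C)\supseteq\FQ(\zeta_p)$. For the reverse inclusion, Cauchy's theorem \cite[Theorem 3.9]{BNRW1} applied to $\D_\C'$ forces $\ord(T_{\D_\C'})=p^{m'}$, and the containment $\FQ(T_{\D_\C'})\subseteq\FQ(t_{\D_\C'})$ of \cite[Theorem II]{DongLNg} bounds the $p$-part of $\ord(T_{\D_\C'})$ by the $p$-part of $\ord(t_{\D_\C'})$; hence it suffices to show the latter is at most $p$. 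If instead some $Y\in\D_\C'$ had $\ord(t_Y)$ with $p$-part at least $p^2$, the proof of Remark \ref{numGalcon} applied inside $\D_\C'$ (which only uses the $p$-part of $\Gal(\FQ(t_Y)/\FQ)$) produces a Galois orbit of $Y$ of size at least $p(p-1)/2$, so $\rank(\D_\C')\ge p(p-1)/2$. On the other hand, $\C$ is non-pointed (as it contains $\D$), so $\rank(\C)\le p^2-1$, and $\rank(\D)=(p-1)/2$ forces $\rank(\D_\C')\le 2(p+1)$. The inequality $p(p-1)\le 4(p+1)$ then gives $p\le 5$. For $p\ge 7$ this rules out $m'\ge 2$, so $\FQ(T_{\D_\C'})\subseteq\FQ(\zeta_p)$ and consequently $\FQ(T_\C)\subseteq\FQ(\zeta_p)$. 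For $p=5$, $\dim(\D_\C')=5(5\mp\sqrt{5})/2$ satisfies $N(\dim(\D_\C'))=125$ with $d_{\dim(\D_\C')}=2$, so Proposition \ref{norm=5^3} identifies $\D_\C'$ as either a Deligne product of three Galois conjugates of $\C(\ssl_2,3)_\ad$ or $\C(\Z_5,\eta)\boxtimes\C(\ssl_2,3)_\ad^\sigma$, and in both cases every factor has $T$-matrix of order $5$. The remaining inclusion $\FQ(S_\C)\subseteq\FQ(T_\C)$ is the general fact from \cite[Theorem II]{DongLNg}.

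The main obstacle is the boundary case $p=5$, where the rank-orbit inequality degenerates and cannot a priori exclude $\ord(t_{\D_\C'})=25$; resolving it requires the explicit classification of modular categories with $N(\dim)=125$ provided by Proposition \ref{norm=5^3}, whereas for all $p\ge 7$ the rank contradiction handles the argument cleanly.
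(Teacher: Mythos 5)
Your proposal is correct and follows essentially the same route as the paper: decompose $\C\cong\D\boxtimes\D_\C'$ via \cite[Theorem 3.13]{DrGNO2}, bound $p\leq 23$ from $\dim(\D_\C')=f(\pi/p)>\frac{4\sqrt{3}}{5}$ using Lemma \ref{function} and \cite[Proposition A.1.1]{O3}, and force the $p$-part of the twist orders down to $p$ by playing the Galois-orbit bound of Remark \ref{numGalcon} against $\rank(\D_\C')\leq 2(p+1)$, with Proposition \ref{norm=5^3} handling $p=5$ and \cite[Theorem II]{DongLNg} giving $\FQ(S_\C)\subseteq\FQ(T_\C)$. Your extra care (tracking the Galois conjugate $\tau$, excluding the Fibonacci exception in Ostrik's bound, and running the orbit-versus-rank contradiction inside $\D_\C'$ rather than crudely in $\C$) only sharpens the paper's terser version of the same argument.
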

\begin{proof}Let $\D:=\C(\ssl_2,p-2)_\text{ad}$, then $\C\cong\D\boxtimes\D_\C'$ as modular fusion category \cite[Theorem 3.13]{DrGNO2}, thus $\dim(\D)=\frac{p}{4\text{sin}^2(\pi/p)}$ and $\dim(\D_\C')=4p\text{sin}^2(\pi/p)$. By \cite[Proposition A.1.1]{O3}, we obtain that
$\dim(\D_\C')>\frac{4\sqrt{3}}{5}$,
since $\dim(\D_\C')=f(\pi/p)$, where $f(x)=\frac{4\pi\sin^2(x)}{x}$, and $f(\pi/29)<1.38<\frac{4\sqrt{3}}{5}$,   Lemma \ref{function} implies $p\leq23$.

Since $\C$ is not pointed, $\rank(\C)\leq p^2-1$ by \cite[Remark 4.2.3]{O3}; while $\rank(\D)=\frac{p-1}{2}$, so $\rank(\D_\C')\leq 2(p+1)$. Assume that
$\FQ(t_\C)=\FQ(\zeta_{p^n})$ with $n$ being minimal. When $p>5$, if $n>1$,  then $\rank(\C)>p^2$ by Remark \ref{numGalcon}, which is a contradiction. When $p=5$, the structure of $\C$ is known by Proposition \ref{norm=5^3}, so  $n=1$.
\end{proof}

\begin{lemm}\label{algebraicunit}
Let $\C$ be a modular fusion category of  global dimension $p\beta_p$, where $p$ is a prime and $\beta_p:=2-(\zeta_p+\zeta_p^{-1})$. Then $\dim(X)$ is an algebraic unit for all objects $X\in\Q(\C)$.
\end{lemm}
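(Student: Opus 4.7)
The plan is to analyze the ideal factorization of $\dim(X)$ in the ring of integers of $\FQ(\zeta_p)^+$, where $\dim(\C) = p\beta_p$ is a pure power of the unique prime $\mathfrak{p}=(\beta_p)$ above $p$, and then force each $\dim(X)$ to be an algebraic unit via a positivity/Kronecker argument.

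First I would establish $\dim(X) \in \FQ(\zeta_p)^+$ for every $X$. Since $N(\dim(\C)) = N(p\beta_p) = p^{(p+1)/2}$ is a prime power, Cauchy's theorem \cite[Theorem 3.9]{BNRW1} forces $\ord(T_\C)=p^m$ for some $m\geq 1$; combining Remark \ref{numGalcon} on Galois orbit sizes with the rank bound from \cite[Lemma 4.2.2]{O3} pins down $m=1$, so by \cite[Theorem II]{DongLNg} we obtain $\FQ(S_\C)\subseteq\FQ(T_\C)=\FQ(\zeta_p)$, and total reality of $\dim(X)$ yields the claim. Then, using the total ramification $(p)=\mathfrak{p}^{(p-1)/2}$ in $\mathcal{O}_{\FQ(\zeta_p)^+}$ with $\mathfrak{p}=(\beta_p)$, one has $(\dim(\C))=\mathfrak{p}^{(p+1)/2}$; since $f_X=\dim(\C)/\dim(X)^2$ is an algebraic integer by \cite[Corollary 2.14]{O1}, the ideal $(\dim(X)^2)$ divides $\mathfrak{p}^{(p+1)/2}$, so $(\dim(X))=\mathfrak{p}^{k_X}$ for some integer $0\leq k_X\leq (p+1)/4$, and $\dim(X)$ is a unit if and only if $k_X=0$.

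To conclude $k_X=0$ for every $X$, set $R:=\sum_{k_X\geq 1}\dim(X)^2$ and $\alpha:=R/(p\beta_p)\in\FQ(\zeta_p)^+$. Because $\sum_{k_X=0}\dim(X)^2\geq\dim(I)^2=1>0$ in every real embedding of $\FQ(\zeta_p)^+$, we have $0\leq\sigma(\alpha)<1$ for every $\sigma\in\Gal(\FQ(\zeta_p)^+/\FQ)$. If I can show $\alpha$ is an algebraic integer, then Kronecker's theorem (no nonzero algebraic integer in a totally real field has all real conjugates strictly inside $[0,1)$, since any root of unity lying in a totally real field is $\pm 1$) forces $\alpha=0$, hence $R=0$, hence $k_X=0$ for every $X$.

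The main obstacle is establishing the sharp divisibility $(p\beta_p)\mid (R)$, equivalently $v_\mathfrak{p}(R)\geq (p+1)/2$. The naive term-wise estimate only gives $v_\mathfrak{p}(R)\geq 2$. I expect to close this gap by combining the Galois symmetry $\sigma(f_X)=f_{\hat\sigma(X)}$, which makes the function $X\mapsto k_X$ constant on Galois orbits, with the $d$-number property of each formal codegree from \cite[Theorem 1.2]{O1} and the class equation $\sum_X 1/f_X=1$ in (\ref{classeqt}), applied orbit by orbit to lift the valuation bound to the required exponent $(p+1)/2$.
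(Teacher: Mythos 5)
Your reduction is set up correctly as far as it goes: granting $\dim(X)\in\FQ(\zeta_p)^+$, the total ramification $(p)=\mathfrak{p}^{\frac{p-1}{2}}$ with $\mathfrak{p}=(\beta_p)$ does give $(\dim(X))=\mathfrak{p}^{k_X}$, and your positivity/norm argument would indeed kill $R=\sum_{k_X\geq1}\dim(X)^2$ \emph{if} you had $p\beta_p\mid R$. But the proof has a genuine gap exactly where you flag it: the divisibility $v_{\mathfrak{p}}(R)\geq\frac{p+1}{2}$ is never established, the term-wise estimate only gives $v_{\mathfrak{p}}(R)\geq2$, and the sketched remedy is not an argument. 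Galois symmetry only tells you $k_X$ is constant on orbits (note that $\sigma(\dim(X)^2)=\frac{\sigma(\dim(\C))}{\dim(\C)}\dim(\hat\sigma(X))^2$, so $R$ is not even Galois-stable as written), while neither the $d$-number property of individual codegrees nor equation (\ref{classeqt}) controls the $\mathfrak{p}$-adic valuation of a partial sum of the $\dim(X)^2$. In effect, the divisibility you still need is essentially equivalent to the lemma itself, so the scheme does not close.

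What is missing is the single external input the paper uses. Since $N(\dim(\C))=p^{\frac{p+1}{2}}$, the category $\C$ cannot be transitive (transitive ones are Galois conjugates of $\C(\ssl_2,p-2)_\ad$, whose dimension has norm $p^{\frac{p-3}{2}}$), and then \cite[Theorem 4.4]{Schopieray} forces $p\mid f$ for \emph{every} formal codegree $f=\dim(\C)/\dim(X)^2$. In your language this reads $v_{\mathfrak{p}}(f)\geq\frac{p-1}{2}$, hence $2k_X=v_{\mathfrak{p}}(\dim(X)^2)\leq v_{\mathfrak{p}}(p\beta_p)-\frac{p-1}{2}=1$, so $k_X=0$ at once; equivalently, as in the paper, $\beta_p/\dim(X)^2$ is an algebraic integer, so $N(\dim(X))^2$ divides $N(\beta_p)=p$, and a perfect square dividing a prime equals $1$. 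No Kronecker-type argument is needed once this divisibility of codegrees is available. (A side issue: your preliminary step pinning $\ord(t_\C)=p$ via Remark \ref{numGalcon} and the rank bound is not airtight for $p=5$, where $\rank(\C)\leq11$ does not exclude an orbit of size $10$; but this is minor compared with the main gap.)
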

\begin{proof}
Let $f=\frac{\dim(\C)}{\dim(X)^2}$ be an arbitrary  formal codegree of $\C$ corresponding to a simple object $X$. Since $\C$ is not transitive,  $p\mid f$
by \cite[Theorem 4.4]{Schopieray}, hence $\frac{\beta_p}{\dim(X)^2}$ is an algebraic integer, i.e., $N_{\FQ(\zeta_p)^+}(\dim(X)^2)\mid N_{\FQ(\zeta_p)^+}(\beta_p)$ by \cite[Proposition 4.2]{Schopieray}. While $N_{\FQ(\zeta_p)^+}(\dim(X)^2)=N_{\FQ(\zeta_p)^+}(\dim(X))^2$ and $N_{\FQ(\zeta_p)^+}(\beta_p)=p$, so $N_{\FQ(\zeta_p)^+}(\dim(X)^2)=1$, which means that  $\dim(X)$ must be  an algebraic unit.
\end{proof}
\begin{coro}\label{fixnobject}Let $\D$ be a modular fusion category of global dimension $p\beta_p$ where $p>3$ is a prime. Then $\C$ does not contain simple object that is fixed by $\Gal(\FQ(\zeta_p)^+/\FQ)$.
\end{coro}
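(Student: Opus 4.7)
The plan is to argue by contradiction: suppose $X\in\Q(\D)$ satisfies $\hat\sigma(X)=X$ for every $\sigma\in\Gal(\FQ(\zeta_p)^+/\FQ)$, and derive an impossibility from the shape of the associated formal codegree.

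Under this assumption, the formal codegree $f_X=\dim(\D)/\dim(X)^2=p\beta_p/\dim(X)^2$ satisfies $\sigma(f_X)=f_{\hat\sigma(X)}=f_X$ for all $\sigma\in\Gal(\FQ(\zeta_p)^+/\FQ)$. Since $\dim(\D)=p\beta_p\in\FQ(\zeta_p)^+$, and (as must be verified) $\dim(X)^2\in\FQ(\zeta_p)^+$ in this setting, the invariance pins $f_X$ down to $\FQ_{>0}$. Now Lemma \ref{algebraicunit} says $\dim(X)$ is an algebraic unit, so $\dim(X)^2$ is a totally positive algebraic unit with $N_{\FQ(\zeta_p)^+/\FQ}(\dim(X)^2)=1$. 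Using the standard identity $N_{\FQ(\zeta_p)^+/\FQ}(\beta_p)=\prod_{k=1}^{p-1}(1-\zeta_p^k)=p$, the computation
\[
1=N_{\FQ(\zeta_p)^+/\FQ}(\dim(X)^2)=\frac{N_{\FQ(\zeta_p)^+/\FQ}(p\beta_p)}{f_X^{(p-1)/2}}=\frac{p^{(p+1)/2}}{f_X^{(p-1)/2}}
\]
yields $f_X=p^{(p+1)/(p-1)}$. But $(p+1)/(p-1)=1+2/(p-1)$ fails to be an integer whenever $p>3$, so $p^{(p+1)/(p-1)}$ is irrational, contradicting $f_X\in\FQ$.

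The main technical obstacle I anticipate is justifying the containment $\dim(X)^2\in\FQ(\zeta_p)^+$ from the pointwise $\Gal(\FQ(\zeta_p)^+/\FQ)$-invariance alone; this should follow either by bounding $\ord(T_\D)$ via Cauchy's theorem applied to $N(\dim(\D))=p^{(p+1)/2}$ together with the inclusion $\FQ(S_\D)\subseteq\FQ(T_\D)$, or more directly by invoking the Ng--Schauenburg Galois symmetry, which gives $\sigma(\dim(X)^2)/\dim(X)^2=\sigma(\dim(\D))/\dim(\D)$ and hence $\dim(X)^2/\dim(\D)\in\FQ$ straightaway, bypassing any field-containment worry. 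With that in hand, the norm computation above closes the proof.
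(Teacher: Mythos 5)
Your proposal is correct and takes essentially the same route as the paper: both proofs use Galois-invariance of the formal codegree $\dim(\D)/\dim(X)^2$ of a fixed simple object to force it into $\FQ$ (the paper: into $\Z$), and then combine Lemma \ref{algebraicunit} with $N(\beta_p)=p$, i.e. $N(\dim(\D))=p^{(p+1)/2}$, to obtain a norm contradiction. The only difference is the endgame: the paper pins the codegree down so that $\dim(X)^2=\beta_p$, contradicting the unit property, while you observe directly that $f_X^{(p-1)/2}=p^{(p+1)/2}$ has no rational solution when $p>3$ — the same ingredients, arranged slightly differently.
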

\begin{proof}
Let  $X$ be a  simple object that is fixed by the  Galois group $\Gal(\FQ(\zeta_p)^+/\FQ)$, then  for any $\sigma\in\Gal(\FQ(\zeta_p)^+/\FQ)$, we have
\begin{align*}
\frac{\dim(\D)}{\dim(X)^2}=\sigma\left(\frac{\dim(\D)}{\dim(X)^2}\right)
=\frac{\dim(\D)}{\dim(\hat{\sigma}(X))^2}\in\Z,
\end{align*}
which implies $\dim(X)^2=\beta_p$, it contradicts to Lemma \ref{algebraicunit}.
\end{proof}

\begin{prop}\label{restrank}Let $\C$ be a modular fusion category of global dimension $p\beta_p$, where $p\geq7$. Then $\rank(\C)=p-1$  or $\frac{3(p-1)}{2}$.
\end{prop}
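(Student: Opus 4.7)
The plan is to classify the modular representation $\rho_\C$ of $\SL(2,\Z)$ associated to $\C$, following the method of Corollary \ref{ptimesunit}. First, (\ref{NormrankIneq}) with $d_{\dim(\C)}=(p-1)/2$ and $N(\dim(\C))=p^{(p+1)/2}$ gives the a priori bound $\rank(\C)\leq p^{(p+1)/(p-1)}$, while if $\ord(t_\C)=p^n$ with $n\geq 2$ then Remark \ref{numGalcon} produces a Galois orbit of size at least $p(p-1)/2>p^{(p+1)/(p-1)}$ for every $p\geq 7$. Hence $\rho_\C$ factors through $\SL(2,\Z_p)$.

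Next I would restrict the irreducible constituents of $\rho_\C$. By \cite{Eholzer1} no $p$- or $(p+1)$-dimensional summand occurs. A $(p+1)/2$-dimensional summand would force $\rho_\C\cong\rho_0\oplus\rho_1$ by \cite[Proposition 3.22]{NgRWW}, whence Theorem \ref{rank(p+3)/2} would make $\dim(\C)$ equal to $p$ times a totally positive algebraic unit; since $\beta_p$ has norm $p$, this contradicts $\dim(\C)=p\beta_p$. A trivial summand has $\hxt$-spectrum $\{1\}$ disjoint from that of any remaining level-$p$ constituent, violating \cite[Lemma 3.18]{BNRW2}, and an all-trivial $\rho_\C$ is excluded by \cite[Lemma 4]{Eholzer2}. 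Each of the two $(p-1)/2$-dimensional irreducibles of $\SL(2,\Z_p)$ occurs with multiplicity at most one by \cite[Lemma 5.2.2]{PSYZ}, and the two can appear together only if a $(p-1)$-dimensional constituent is also present (else the two disjoint $\hxt$-spectra violate \cite[Lemma 3.18]{BNRW2}). Writing $\rank(\C)=a(p-1)+c(p-1)/2$ with $a\geq 0$, $c\in\{0,1,2\}$, $c=2\Rightarrow a\geq 1$, $a+c\geq 1$, the rank bound forces $(a,c)\in\{(0,1),(1,0),(1,1)\}$ for every $p\geq 11$, leaving $\rank(\C)\in\{(p-1)/2,\,p-1,\,3(p-1)/2\}$; for $p=7$ the extra possibilities $(a,c)\in\{(1,2),(2,0)\}$ both contribute the single additional value $\rank(\C)=2(p-1)=12$.

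The value $(p-1)/2$ would make $\C$ transitive, hence a Galois conjugate of $\C(\ssl_2,p-2)_\ad$ by \cite[Theorem II]{NgWaZh} with global dimension $p/\sigma(\beta_p)$; equating this to $p\beta_p$ would require $\beta_p\sigma(\beta_p)=1$ for some $\sigma\in\Gal(\FQ(\zeta_p)^+/\FQ)$, which fails by a direct check on the conjugates $\beta_p^{(k)}=4\sin^2(k\pi/p)$. To rule out $\rank(\C)=12$ at $p=7$ I would use the identity $\sum_{X\in\Q(\C)}\text{tr}(\dim(X)^2)=p^2=49$ (with the trace taken over $\FQ(\zeta_7)^+/\FQ$): by Lemma \ref{algebraicunit} each $\dim(X)^2$ is a totally positive unit of degree $1$ or $3$ over $\FQ$, contributing trace $3$ or at least $5$ (by Siegel's trace theorem \cite{Siegel}), so the count forces at least six simple objects with $\dim(X)^2=1$; passing to a pseudo-unitary Galois conjugate makes these invertible, so $|\C_\pt|\geq 6$. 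Cauchy's theorem \cite[Theorem 3.9]{BNRW1} forces $|\C_\pt|$ to be a power of $7$, hence $|\C_\pt|=7$. If $\C_\pt$ is modular, then \cite[Theorem 3.13]{DrGNO2} gives $\C\cong\C_\pt\boxtimes(\C_\pt)_\C'$ with $\dim((\C_\pt)_\C')=\beta_7<\sqrt{4/3}$, contradicting \cite[Theorem 4.2.1]{O3}; otherwise $\C_\pt\cong\text{Rep}(\Z_7)$ is Tannakian, and $\C_\pt\subseteq(\C_\pt)_\C'$ combined with $\dim((\C_\pt)_\C')=\beta_7<7=\dim(\C_\pt)$ is absurd. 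This excludes $\rank(\C)=12$ and yields $\rank(\C)\in\{p-1,\,3(p-1)/2\}$.

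The most delicate point is the $p=7$ exclusion of $\rank(\C)=2(p-1)$: both the Siegel trace-count and the subcategory analysis of $\C_\pt$ are needed, whereas the other exclusions are uniform in $p$.
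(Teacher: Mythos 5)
Your overall strategy (classify the constituents of $\rho_\C$ as in Corollary \ref{ptimesunit} and read off the rank) is a genuinely different route from the paper, which first forces $p\le 23$ via Ostrik's bound (Theorem \ref{transubcat}, Lemma \ref{function}) and then pins down Galois-orbit sizes and formal codegrees (Corollary \ref{fixnobject} plus GAP $d$-number/cyclotomic tests). But your reduction to constituents of dimensions $(p-1)/2$ and $p-1$ has a real gap: the cited results do not exclude the other constituents. \cite{Eholzer1}, as used in this paper, only says that a $\frac{p+1}{2}$-dimensional irreducible cannot be the \emph{whole} modular representation; it does not rule out a $p$-dimensional (Steinberg) or $(p+1)$-dimensional (principal series) summand inside a larger $\rho_\C$, and \cite[Proposition 3.22]{NgRWW} only treats $\rho_\C\cong m\rho_0\oplus\rho_1$, so the mere presence of a $\frac{p+1}{2}$-dimensional summand does not force $\rho_\C\cong\rho_0\oplus\rho_1$. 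In particular $\rho_\C\cong\rho_1\oplus\rho'$ with $\rho'$ the $\frac{p-1}{2}$-dimensional irreducible of the same quadratic type has overlapping $\hxt$-spectrum (so \cite[Lemma 3.18]{BNRW2} is silent) and total dimension $p$, and a lone Steinberg or principal-series constituent gives rank $p$ or $p+1$; these are precisely ranks that would contradict the statement, so they are exactly what must be ruled out, and nothing in your argument does so. (Your exclusion of rank $\frac{p-1}{2}$ is fine, and is even easier than a check on conjugates: $\beta_p\sigma(\beta_p)=1$ is impossible because $N(\beta_p)=p$.)

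The second gap is the $p=7$ exclusion of rank $12$. The Siegel trace count giving at least six simples with $\dim(X)^2=1$ is correct, but the step ``passing to a pseudo-unitary Galois conjugate makes these invertible'' fails on both counts: a pseudo-unitary Galois conjugate need not exist, since that requires $\FPdim(\C)$ to be a Galois conjugate of $\dim(\C)$ --- already false for the actual rank-$9$ category of Proposition \ref{p=7}, where $\FPdim(\C)=\frac{49}{\beta_7^2}$ while $\dim(\C)=7\beta_7$ --- and $\dim(X)^2=1$ does not imply invertibility, as that same category contains a non-invertible simple object $X_4$ with $\dim(X_4)^2=1$ and $\FPdim(X_4)>1$. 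Hence the conclusion $|\C_\pt|\ge 6$ does not follow, and the rank-$12$ case at $p=7$ is not closed by your argument; the paper disposes of it by a GAP-assisted analysis of the possible minimal polynomials $x^3-ax^2+bx-7^4$ of the formal codegrees, which pins the codegrees and gives $\rank(\C)=9$ directly.
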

\begin{proof}
Since $\FQ(\dim(\C))=\FQ(\zeta_p)^+$ and $\ord(t)=p$ by Theorem \ref{transubcat}, the orbit of $I$ has exactly $\frac{p-1}{2}$ simple objects, it is easy to see that $\C$ can't be a transitive modular fusion category, so $\rank(\C)>\frac{p-1}{2}$.
By Corollary \ref{transubcat}, $p\leq 23$; Corollary \ref{fixnobject} says that each Galois orbit of simple objects of $\C$ has exactly $\frac{p-1}{2}$  simple objects when $p=7,11,23$.

When $p=13,17,19$, for any formal codegree $f$ of $\C$, and let $g(x)=x^n-a_1x^{n-1}+\cdots+(-1)^na_n$ be the minimal polynomial of $f$, where $a_j$ are positive integers and $n$ is a divisor of  $\frac{p-1}{2}$. Notice that $a_0=N(f)$ is a power of $p$ and  that
\begin{align*}
 \sum_{\sigma\in\Gal(\FQ(\zeta_p)^+/\FQ)}\frac{1}{\sigma(\dim(\C))}=\frac{a_{n-1}}{a_n}=\left\{
                                                                           \begin{array}{ll}
                                                                             \frac{7}{13}, & \hbox{if $p=13$;} \\
                                                                             \frac{12}{17},&\hbox{if $p=17$;} \\
\frac{15}{19}, & \hbox{if $p=19$.},
                                                                           \end{array}
                                                                         \right.
\end{align*}
 equation (\ref{classeqt}) implies $\sum_{\sigma\in\Gal(\FQ(f))/\FQ}\frac{1}{\sigma(f)}\leq \frac{a_n-a_{n-1}}{a_n}$.
By using the program GAP,  we can show  that the $d$-number test  or  cyclotomic test fail when $[\FQ(f):\FQ]<\frac{p-1}{2}$ for any formal codegree $f$ of $\C$, hence each Galois orbit of simple object of $\C$ has  $\frac{p-1}{2}$ simple objects. Since $N(\dim(\C))=p^\frac{p+1}{2}$,  the inequality (\ref{NormrankIneq}) shows that
$\rank(\C)\leq \left[p^\frac{p+1}{p-1}\right]=\left[p\cdot p^\frac{2}{p-1}\right]$, where $[\alpha]$ is the integer part of a positive algebraic integer $\alpha$. Therefore, $\rank(\C)\leq 12$ if $p=7$, $\rank(\C)=p-1,\frac{3(p-1)}{2}$ when $p=11,13,17$ and $\rank(\C)=p-1$ when $p=19,23$.

When $p=7$, for any formal codegree $f$ of $\C$, let $g(x)=x^3-ax^2+bx-7^4$ be the minimal polynomial of $f$, where $a,b$ are positive integers. Then the  $d$-number test and cyclotomic test show that
$g(x)=x^3-49x^2+686x-7^4$, which is the minimal polynomial of $7\beta_7$, or $g(x)=x^3-98x^2+1029x-7^4$, the minimal polynomial of the totally positive algebraic integer $\frac{49}{\beta_7^2}$.  Then the set of formal codegrees of $\C$ are exactly the Galois conjugates of $\dim(\C)$ (with multiplicity equals two) and $\FPdim(\C)=\frac{49}{\beta_7^2}$ by  equation (\ref{classeqt}), so $\rank(\C)=9$.
\end{proof}

\begin{lemm}\label{p=11,13}
Let $\C$ be a modular fusion category of  global dimension $p\beta_p$ where $p=11,13$. Then $\rank(\C)\neq \frac{3(p-1)}{2}$.
\end{lemm}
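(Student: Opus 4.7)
The plan is to assume $\rank(\C) = \frac{3(p-1)}{2}$ and derive a contradiction by combining the class equation (\ref{classeqt}) with a GAP enumeration of admissible $d$-numbers, after first excluding invertible objects.

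First I would fix the orbit structure. The proof of Proposition \ref{restrank} shows that every Galois orbit of $\Q(\C)$ has exactly $\frac{p-1}{2}$ elements when $p \in \{11, 13\}$, so the rank hypothesis partitions $\Q(\C)$ into three orbits $\Gamma_1 \ni I$, $\Gamma_2$, $\Gamma_3$. Choosing representatives $I, X_2, X_3$ and setting $f_i := \dim(\C)/\dim(X_i)^2$, Lemma \ref{algebraicunit} gives $f_i = p\beta_p \cdot u_i^{-1}$, where $u_i := \dim(X_i)^2$ is a totally positive algebraic unit in $\FQ(\zeta_p)^+$, with $f_1 = p\beta_p$.

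Since $\Gr(\C)\otimes_\Z\FC$ is commutative, every irreducible representation is one-dimensional and the class equation (\ref{classeqt}) becomes
\begin{align*}
\text{tr}_{\FQ(\zeta_p)^+/\FQ}(1/p\beta_p) + \text{tr}(1/f_2) + \text{tr}(1/f_3) = 1.
\end{align*}
Using the identity $\sum_{k=1}^{(p-1)/2} \csc^2(k\pi/p)/4 = (p^2-1)/24$, I compute $\text{tr}(1/p\beta_p) = (p^2-1)/(24p)$, which is $5/11$ for $p=11$ and $7/13$ for $p=13$. In both cases this reduces to
\begin{align*}
\text{tr}(1/f_2) + \text{tr}(1/f_3) = 6/p.
\end{align*}
I would next rule out $u_i = 1$ (invertibility of $X_i$): a short divisibility argument on $\FPdim(\C) = p\cdot\sigma(\beta_p)$ forces $\FPdim(\C_\pt) \in \{1, p\}$, but $\FPdim(\C_\pt) = p$ would give $\dim((\C_\pt)_\C') = \beta_p$, whose smallest Galois conjugate is $4\sin^2(\pi/p) < 4\sqrt{3}/5$ for $p \geq 11$, violating \cite[Proposition A.1.1]{O3}. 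Hence $\C_\pt = \vvec$ and $u_2, u_3 \neq 1$, so the classes $[f_2], [f_3]$ are both distinct from $[p\beta_p]$.

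Each $f_i$ therefore has minimal polynomial of degree $\frac{p-1}{2}$ over $\FQ$ with constant term of absolute value $p^{(p+1)/2}$, passing both the $d$-number test and the cyclotomic test (abelian splitting field contained in $\FQ(\zeta_p)^+$), with every root totally positive and greater than $\sqrt{4/3}$ by \cite[Theorem 4.2.1]{O3}. These constraints bound the remaining coefficients, yielding a finite list of candidates which I would enumerate in GAP exactly as in the proof of Proposition \ref{restrank}. For each pair $(f_2, f_3)$ in the list (including the degenerate sub-case $[f_2] = [f_3]$ where $\text{tr}(1/f_2) = 3/p$ is required), I would read off $\text{tr}(1/f_i) = a^{(i)}_{n-1}/a^{(i)}_n$ from the polynomial coefficients and check the displayed trace identity; the expectation is that no admissible pair satisfies it, completing the contradiction. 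The main obstacle is confirming that the GAP enumeration is exhaustive once all constraints are imposed, after which the verification is a finite inspection.
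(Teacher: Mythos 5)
Your plan is essentially the paper's: split $\Q(\C)$ into three Galois orbits of size $\frac{p-1}{2}$, rewrite the class equation (\ref{classeqt}) as $\mathrm{tr}(1/p\beta_p)+\mathrm{tr}(1/f_2)+\mathrm{tr}(1/f_3)=1$, compute $\mathrm{tr}(1/p\beta_p)=\frac{p^2-1}{24p}$ so that $\mathrm{tr}(1/f_2)+\mathrm{tr}(1/f_3)=\frac{6}{p}$, use Lemma \ref{algebraicunit} to pin the constant terms of the minimal polynomials at $p^{\frac{p+1}{2}}$, and eliminate the remaining candidates by $d$-number and cyclotomic tests. But two steps are genuinely problematic. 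First, your exclusion of $u_i=1$ does not work: $\dim(X_i)^2=1$ is not the same as invertibility, which means $\FPdim(X_i)=1$; in a category that is not pseudo-unitary a non-invertible simple object can perfectly well have quantum dimension $\pm1$ (this happens in this very family: in the $p=7$ analysis of Proposition \ref{p=7} the object $X_4$ has $\dim(X_4)^2=1$ while $\FPdim(X_4)\in\{d_X,d_Y\}$). Moreover your premise $\FPdim(\C)=p\cdot\sigma(\beta_p)$ is unjustified --- $\FPdim(\C)$ is $p$ times a totally positive unit, not necessarily a Galois conjugate of $\beta_p$. So showing $\C_\pt=\vvec$ does not let you discard the case $[f_i]=[p\beta_p]$; you must keep it. (It is easy to kill directly: then the other orbit has $\mathrm{tr}(1/f)=\frac{1}{p}$, so its conjugates, ordered increasingly, satisfy $f_j\geq jp$, and their product exceeds $p^{\frac{p+1}{2}}\,$, contradicting that the norm equals the constant term --- this is exactly the paper's case $m=1$.)

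Second, the decisive content of the lemma is the finite verification that you leave as an ``expectation,'' together with the exhaustiveness of the search, which you yourself flag as the main obstacle. The paper supplies precisely this structure: the $d$-number condition forces the subleading coefficients to be $m p^{d}$ with $d=\frac{p-1}{2}$, the identity $\mathrm{tr}(1/f_2)+\mathrm{tr}(1/f_3)=\frac{6}{p}$ forces $m\leq 3$ for the smaller orbit, the case $m=1$ dies by the product bound above, and for $m=2,3$ explicit inequalities among the coefficients (in the style of \cite[Theorem 4.2]{Yu}, using $f_j\geq \frac{jp}{m}$ together with further $p$-power divisibilities from the $d$-number test) cut the search down to finitely many polynomials, all of which fail the cyclotomic test. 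Without either this reduction or an actually executed enumeration with proved coefficient bounds, your argument is a program rather than a proof; with the $u_i=1$ case restored and the enumeration carried out, it would coincide with the paper's proof.
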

\begin{proof}
Assume $\rank(\C)=\frac{3(p-1)}{2}$. By Proposition \ref{restrank} each Galois orbits of the simple objects has exactly $\frac{p-2}{2}$ simple objects.  Let $X_1=I,X_2,X_3$ be the representatives of each Galois orbits. Let $d:=\frac{p-1}{2}$, let $g_2(x)=x^d+\sum_{j=1}^d(-1)^ja_jx^{d-j}$ and $g_3(x)=x^d+\sum_{j=1}^d(-1)^jb_jx^{d-j}$ be the minimal polynomials of the formal codegrees $\frac{\dim(\C)}{\dim(X_i)^2}$ of $\C$, respectively, where $a_j,b_j$ are positive integers for $1\leq j\leq d$. Since $\dim(X_i)^2$ are algebraic units by Lemma \ref{algebraicunit}, we obtain $a_d=b_d=N(\dim(\C))=p^{d+1}$. The $d$-number condition \cite{O1} says $a_d^{d-1}\mid a_{d-1}^d$ and $b_d^{d-1}\mid b_{d-1}^d$, that is, $p^d$ divides both $a_{d-1}$ and $b_{d-1}$.

Assume $b_{d-1}\geq a_{d-1}$ and  $a_{d-1}=mp^d$ with $m$ being a positive integer. As $\sigma\left(\frac{\dim(\C)}{\dim(X_i)^2}\right)=\frac{\dim(\C)}{\dim(\hat\sigma(X_i))^2}$ for all $\sigma\in\Gal(\FQ(\zeta_p)^+/\FQ)$, then
\begin{align*}
\dim(\C)&=\sum_{i=1}^3\sum_{\sigma\in\Gal(\FQ(\zeta_p)^+/\FQ)}\dim(\hat\sigma(X_i))^2\\
&=\sum_{\sigma\in\Gal(\FQ(\zeta_{17})^+/\FQ)}\frac{\dim(\C)}{\sigma(\dim(\C))}+
\sum_{i=2}^3\sum_{\sigma\in\Gal(\FQ(\zeta_{17})^+/\FQ)}
\frac{\dim(\C)}{\sigma(\dim(\C))}\sigma(\dim(X_i)^2),
\end{align*}
meanwhile $\sum_{\sigma\in\Gal(\FQ(\zeta_p)^+/\FQ)}\frac{\dim(\C)}{\sigma(\dim(\C))}
=\left\{
   \begin{array}{ll}
     5\beta_{11}, & \hbox{if $p=11$;} \\
     7\beta_{13}, & \hbox{if $p=13$.}
   \end{array}
 \right.
$, hence $\frac{6}{p}\geq\frac{2m}{p}$.

We assume $p=11$ below, the other case is same. Let $f_1\leq\cdots\leq f_5$ be the Galois conjugates of $\frac{\dim(\C)}{\dim(X_2)^2}$. If $m = 1$, then $f_j\geq j\cdot 11$ for all $1\leq j\leq 5$, consequently $a_5\geq11^5 \cdot 5!>11^6$, it is impossible. If $m=2$,  the $d$-number test shows $11^2|a_1$, $11^3|a_2$ and $11^4\mid a_4$; also note $f_j\geq\frac{j\cdot 11}{2}$ ($1\leq j\leq 5$), by using a similar restriction of \cite[Theorem 4.2]{Yu}, we see  $27a_1\leq a_2$,  $11a_2\leq a_3$ and $4a_3\leq a_4=2\cdot 11^5$; the cyclotomic test fails for all possible cases, however. If $m=3$, then $f_j\geq\frac{j\cdot 11}{3}$ ($1\leq j\leq 5$),  $18a_1\leq a_2$, $7a_2\leq a_3$ and $7a_3\leq 2a_4=6\cdot 11^5$, again the cyclotomic test fails for all possible cases. Therefore, $\rank(\C)\neq\frac{3(p-1)}{2}$.
\end{proof}

\begin{lemm}\label{p=17}
Let $\C$ be a modular fusion category of  global dimension $17\beta_{17}$, then $\rank(\C)\neq24$.
\end{lemm}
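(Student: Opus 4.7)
The plan is to argue by contradiction, paralleling the strategy of Lemma \ref{p=11,13} but now with orbit size $\frac{p-1}{2}=8$ and $N(\dim(\C))=17^9$. Suppose $\rank(\C)=24=\frac{3(p-1)}{2}$. By Proposition \ref{restrank}, $\Q(\C)$ splits into exactly three Galois orbits of size $8$. Choose representatives $X_1=I$, $X_2$, $X_3$ and let
\begin{align*}
g_i(x)=x^8+\sum_{j=1}^{8}(-1)^j a_j^{(i)}x^{8-j}\in\Z[x]
\end{align*}
denote the minimal polynomial of the formal codegree $f_i:=\dim(\C)/\dim(X_i)^2$. By Lemma \ref{algebraicunit} each $\dim(X_i)^2$ is an algebraic unit, so $a_8^{(i)}=N(\dim(\C))=17^9$, and the $d$-number condition $(a_8^{(i)})^7\mid(a_7^{(i)})^8$ forces $17^8\mid a_7^{(i)}$. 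Write $a_7^{(i)}=m_i\cdot 17^8$ with $m_i\in\Z_{\geq 1}$.

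The class equation (\ref{classeqt}), combined with the standard identity $\sum_{\sigma\in\Gal(\FQ(\zeta_{17})^+/\FQ)}1/\sigma(\beta_{17})=(17^2-1)/24=12$, gives
\begin{align*}
\frac{12}{17}+\frac{m_2+m_3}{17}=\sum_{i=1}^{3}\frac{a_7^{(i)}}{a_8^{(i)}}=1,
\end{align*}
so $m_2+m_3=5$. Swapping $X_2$ and $X_3$ if necessary, this leaves only the two cases $(m_2,m_3)=(1,4)$ and $(m_2,m_3)=(2,3)$ to analyze.

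For each case, the plan is to enumerate admissible triples $(g_1,g_2,g_3)$ by combining: the residual $d$-number divisibilities $(a_8^{(i)})^{8-j}\mid(a_j^{(i)})^8$ for $j=1,\ldots,6$; lower bounds on the ordered roots $f_{i,1}\leq\cdots\leq f_{i,8}$ of $g_i$ derived from $m_i$ (which controls the sum of reciprocals) together with the universal codegree bound $f_{i,k}>\sqrt{4/3}$ of \cite[Theorem 4.2.1]{O3}; Newton/Maclaurin inequalities linking successive elementary symmetric functions, in the spirit of \cite[Theorem 4.2]{Yu}; and the constraint $\FQ(f_i)\subseteq\FQ(\zeta_{17})^+$. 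For every surviving candidate, I would then apply the cyclotomic test from \cite{O1} in GAP, checking that the splitting field of $g_i$ is abelian over $\FQ$, and discard those that fail. The expected outcome is that no admissible pair $(g_2,g_3)$ survives either case, producing the contradiction.

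The main obstacle is controlling the size of the enumeration. Each $g_i$ has six undetermined positive integer coefficients $a_1^{(i)},\ldots,a_6^{(i)}$ once $a_7^{(i)}$ and $a_8^{(i)}$ are pinned down, and although the identity $m_2+m_3=5$ tightly constrains the leading ratios, the interior coefficients can a priori vary widely. The delicate technical work is to extract inequalities sharp enough (from positivity, the $d$-number divisibilities, and the interlinking three-orbit class equation) that the cyclotomic test need only be applied to a short explicit list of candidate polynomials, keeping the GAP verification tractable.
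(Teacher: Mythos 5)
Your reduction is sound as far as it goes: with Proposition \ref{restrank} giving three Galois orbits of length $8$, Lemma \ref{algebraicunit} giving $a_8^{(i)}=N(\dim(\C))=17^9$, the $d$-number divisibility giving $a_7^{(i)}=m_i\cdot 17^8$, and the class equation together with $\sum_{\sigma}1/\sigma(\beta_{17})=12$ giving $m_2+m_3=5$ — all of this is correct. But from there your argument is only a plan: no contradiction is actually derived; you state that an enumeration followed by GAP tests is ``expected'' to leave no survivors, and you yourself flag that the enumeration may be intractable. Worse, the two tests you lean on cannot do the elimination even in principle: since each $\dim(X_i)^2$ is a totally positive unit of $\FQ(\zeta_{17})^+$, every candidate codegree $17\beta_{17}/\dim(X_i)^2$ generates the Galois-stable ideal $(\beta_{17})^{9}$ and lies in an abelian field, so it passes the $d$-number test and the cyclotomic test automatically. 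The elimination must come from quantitative positivity/trace estimates, which you mention only in passing and never make precise; without them the interior coefficients $a_1^{(i)},\dots,a_6^{(i)}$ range over an enormous set. So there is a genuine gap: the heart of the proof is missing.

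For comparison, the paper avoids any enumeration. It writes
\begin{align*}
\dim(\C)=12\beta_{17}+\sum_{i=2}^{3}\sum_{\sigma\in\Gal(\FQ(\zeta_{17})^+/\FQ)}\frac{\dim(\C)}{\sigma(\dim(\C))}\,\sigma(\dim(X_i)^2),
\end{align*}
bounds each weight below by $\dim(\C)/M$ with $M=17\cdot4\sin^2(8\pi/17)$ the largest conjugate of $\dim(\C)$, and applies the Siegel--Liang--Wu trace bound $\text{tr}(\dim(X_i)^2)>1.79\cdot 8$ to the totally positive integers $\dim(X_i)^2$; this forces $\sin^2(8\pi/17)>1$, a contradiction in a few lines. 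In fact your own constraints already close the gap without GAP: the eight conjugates of $f_i=\dim(\C)/\dim(X_i)^2$ are positive with product $17^9$ and reciprocal sum $m_i/17$, so AM--GM gives $m_i/17\geq 8\cdot 17^{-9/8}$, i.e.\ $m_i\geq 8\cdot 17^{-1/8}>5.6$, hence $m_i\geq 6$ for $i=2,3$, contradicting $m_2+m_3=5$ outright (the ordered-root bound $f_{i,k}\geq 17k/m_i$ that you cite yields the same conclusion via $17^9\geq 17^8\cdot 8!/m_i^8$). The missing idea was thus a trace/size inequality, not more $d$-number and cyclotomic testing.
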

\begin{proof}
On the contrary, assume $\rank(\C)=24$. Let $X_1=I,X_2,X_3$ be the representatives of each Galois orbits. Therefore, same as Lemma \ref{p=11,13}
\begin{align*}
\dim(\C)&=\sum_{\sigma\in\Gal(\FQ(\zeta_{17})^+/\FQ)}\frac{\dim(\C)}{\sigma(\dim(\C))}+
\sum_{i=2}^3\sum_{\sigma\in\Gal(\FQ(\zeta_{17})^+/\FQ)}
\frac{\dim(\C)}{\sigma(\dim(\C))}\sigma(\dim(X_i)^2)\\
&>\sum_{\sigma\in\Gal(\FQ(\zeta_{17})^+/\FQ)}\frac{\dim(\C)}{\sigma(\dim(\C))}
+\frac{\dim(\C)}{M}\sum_{i=2}^3\sum_{\sigma\in\Gal(\FQ(\zeta_{17})^+/\FQ)}\sigma(\dim(X_i)^2),
\end{align*}
where $M$ is the maximal Galois conjugate of $\dim(\C)$. Note that
\begin{align*}
\sum_{\sigma\in\Gal(\FQ(\zeta_{17})^+/\FQ)}\frac{\dim(\C)}{\sigma(\dim(\C))}
=12\beta_{17},
\end{align*}
we have $5\beta_{17}>\frac{\dim(\C)}{M}\cdot2\cdot 8\cdot1.79$ by the Siegel's trace theorem \cite{LM}, which then implies $\sin^2\frac{8\pi}{17}>1 $, it is a contradiction.
\end{proof}

\begin{lemm}\label{nonsimplepbetap}
Let $\C$ be a   modular fusion category of global dimension $p\beta_p$, where $p\geq 7$. If $\C$ is not simple, then $p=7$ and  $\C\cong\C(\ssl_2,5)^\sigma_\text{ad}\boxtimes\C(\ssl_2,5)^{\sigma^2}_\text{ad}$, where $\sigma\in\Gal(\FQ(\zeta_7)/\FQ)$ such that $\sigma(\zeta_7)=\zeta_7^2$.
\end{lemm}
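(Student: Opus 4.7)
The plan is as follows. Since $\C$ is not simple, I decompose $\C\cong\D\boxtimes\D_\C'$ via \cite[Theorem 3.13]{DrGNO2} with $\D$ a nontrivial simple modular subcategory. First I rule out any nontrivial integer-dimensional fusion subcategory: the ideal factorization $(\dim(\C))=(p\beta_p)=\mathfrak{p}^{(p+1)/2}$ in $\mathcal{O}_{\FQ(\zeta_p)^+}$, where $\mathfrak{p}=(\beta_p)$ is the unique prime above $p$, admits only $m=p$ as a possible integer divisor; but then M\"{u}ger's identity $\dim(\D)\dim(\D_\C')=\dim(\C)$ forces $\dim(\D_\C')=\beta_p=4\sin^2(\pi/p)<1$ for $p\geq 7$, which is impossible.

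Writing $(\dim(\D))=\mathfrak{p}^a$, $(\dim(\D_\C'))=\mathfrak{p}^b$ with $a+b=(p+1)/2$ and $a,b\geq 1$, I split into the cases $a<(p-1)/2$ and $a=(p-1)/2$. In the first case $p\nmid\dim(\D)$, so \cite[Proposition 4.2]{Schopieray} gives $d_{\dim(\D)}>a$ and Lemma \ref{P^mdegd>2} forces $a=(p-3)/2$ with $\D\cong\C(\ssl_2,p-2)_\ad^{\sigma_a}$ for some $\sigma_a\in\Gal(\FQ(\zeta_p)/\FQ)$. In the second case $\dim(\D)=pu$ for a nontrivial totally positive unit $u$, so by Corollary \ref{ptimesunit} the category $\D$ is Grothendieck equivalent to $\C(\ssl_2,2(p-1))_A^0$, and $\dim(\D_\C')=\beta_p/u$ has norm $p$; the integer alternative $\dim(\D_\C')\in\Z$ is excluded since $\dim(\D_\C')^{(p-1)/2}=p$ has no integer solution for $p\geq 5$, and the non-integer alternative forces $\D_\C'$ to be a Galois conjugate of $\C(\ssl_2,3)_\ad$ by the norm-$p$ classification in \cite{Schopieray}, requiring $p=5$, both contradicting $p\geq 7$.

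Thus $\C\cong\C(\ssl_2,p-2)_\ad^{\sigma_a}\boxtimes\E$ with $\dim(\E)=\beta_p\cdot\beta_p^{(a)}$ and $N(\dim(\E))=p^2$, where $\beta_p^{(k)}:=4\sin^2(k\pi/p)$ denotes the Galois conjugates of $\beta_p$. Applying Theorem \ref{normcases} to $\E$, the case $\dim(\E)=p^2$ is excluded by the estimate $\beta_p\beta_p^{(a)}\leq 4\beta_p<4<p^2$ valid for $p\geq 7$, leaving only $p\in\{5,7\}$; the hypothesis $p\geq 7$ then forces $p=7$ and $\E\cong\C(\ssl_2,5)_\ad^{\tau_b}$ for some $\tau_b\in\Gal(\FQ(\zeta_7)/\FQ)$. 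Finally, the identity $\dim(\C)=7\beta_7^{(1)}$ combined with $\beta_7^{(1)}\beta_7^{(2)}\beta_7^{(3)}=7$ and $\dim(\C(\ssl_2,5)_\ad^{\sigma_a})\dim(\C(\ssl_2,5)_\ad^{\tau_b})=49/(\beta_7^{(a)}\beta_7^{(b)})$ forces $\{a,b\}=\{2,3\}$ modulo the symmetry $\beta_7^{(k)}=\beta_7^{(7-k)}$, and choosing $\sigma\in\Gal(\FQ(\zeta_7)/\FQ)$ with $\sigma(\zeta_7)=\zeta_7^2$ yields exactly $\C\cong\C(\ssl_2,5)_\ad^\sigma\boxtimes\C(\ssl_2,5)_\ad^{\sigma^2}$.

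The main obstacle will be handling the case $a=(p-1)/2$: one must combine Corollary \ref{ptimesunit}, which pins down $\D$ up to Grothendieck equivalence and thereby constrains $u$ to be a Galois conjugate of $1/(4\sin^2(\pi/(2p)))$, with the Schopieray classification of norm-$p$ modular fusion categories, and establish the quantitative bound $\beta_p\alpha_k\leq 4\sin^2(2\pi/p)<p$ on the possible Galois conjugates of $\beta_p/u$ to rule out both horns of the integer/non-integer dichotomy simultaneously.
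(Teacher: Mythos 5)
Your argument reaches the right conclusion and its engine is the same as the paper's: a tensor factor whose dimension is not divisible by $p$ must be transitive by \cite[Theorem 4.4]{Schopieray} and hence a Galois conjugate of $\C(\ssl_2,p-2)_\ad$, after which dimension bookkeeping over $\FQ(\zeta_p)^+$ forces $p=7$ and pins the two conjugates via $\beta_7^{(1)}\beta_7^{(2)}\beta_7^{(3)}=7$. Where you diverge is in how the second factor is treated. The paper simply observes that once one factor is a conjugate of $\C(\ssl_2,p-2)_\ad$ (valuation $\tfrac{p-3}{2}$ at the prime above $p$), the complementary factor has valuation $2<\tfrac{p-1}{2}$, so it is \emph{also} coprime to $p$, also transitive, and the norm identity $p^{(p+1)/2}=p^{p-3}$ ends the proof in one line. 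You instead run a case split on the valuation of $\dim(\D)$: your Case 2 ($\dim(\D)=pu$) is dispatched through Corollary \ref{ptimesunit} plus the norm-$p$ (Yang--Lee) classification, and your Case 1 is finished by feeding the complement into Theorem \ref{normcases}. Both detours are valid but heavier than needed: in Case 2 you never need to identify $\D$ at all (the complement having norm $p$ already yields the contradiction), and in Case 1 you could apply the transitivity argument to $\E$ directly instead of the $N=p^2$ classification.

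Two points deserve a word if you keep your version. First, writing $(\dim(\D))=\mathfrak{p}^a$ in $\mathcal{O}_{\FQ(\zeta_p)^+}$ presupposes $\dim(\D)\in\FQ(\zeta_p)^+$; as a standalone lemma this needs a justification (e.g.\ Cauchy's theorem together with \cite[Proposition 4.2]{Schopieray} place the dimensions in $\FQ(\zeta_{p^s})^+$, where there is still a unique, totally ramified prime above $p$, so the bookkeeping survives), or one can avoid ideals altogether and argue with divisibility and norms as the paper does. Second, Theorem \ref{normcases} and the norm-$p$ classification are stated for the norm over $\FQ(\dim(\E))$, which could a priori be $p$ rather than $p^2$ if $\dim(\E)$ generated a proper subfield of index $2$; that stray subcase is killed by the same Schopieray norm-$p$ result you already invoke, but it should be mentioned. (Likewise $a,b\geq1$ needs the one-line remark that a nontrivial factor cannot have unit dimension, by \cite[Proposition A.1.1]{O3}.) None of these affect the correctness of your conclusion.
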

\begin{proof}
Indeed, if $\C\cong\B_1\boxtimes\B_2$ with $\B_1,\B_2$ being non-trivial modular fusion subcategories, then it is easy to see that $p$ cannot divide both  $\dim(\B_1)$ and  $\dim(\B_2)$, so  $\B_1$ and  $\B_2$ are  prime transitive modular fusion categories
\cite[Theorem 4.4]{Schopieray}. Note that
\begin{align*}
N(\dim(\C))=p^\frac{p+1}{2}=N(\dim(\B_1))N(\dim(\B_2))=p^{p-3},
\end{align*}
that is, $p=7$, and $\C\cong\C(\ssl_2,5)^\sigma_\text{ad}\boxtimes\C(\ssl_2,5)^{\sigma^2}_\text{ad}$ as modular fusion category.
\end{proof}
For the modular fusion category $\C(\ssl_2,5)_\ad$,  let $\Q(\C(\ssl_2,5)_\ad)=\{I,X,Y\}$, then the fusion rules $\C(\ssl_2,5)_\ad$ are:
$Y\otimes Y=I\oplus X,X\otimes X=I\oplus X\oplus Y,X\otimes Y=X\oplus Y$. We denote $d_Y:=\FPdim(Y)=2\cos\frac{\pi}{7}$ and $d_X:=\FPdim(X)=d_Y^2-1$ below.
\begin{prop}\label{p=7}
Let $\C$ be a modular fusion category of global dimension $7\beta_7$, then  as a modular fusion category $\C\cong\C(\ssl_2,5)^\sigma_\text{ad}\boxtimes\C(\ssl_2,5)^{\sigma^2}_\text{ad}$.
\end{prop}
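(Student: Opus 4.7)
The plan is to show that $\C$ is not simple as a braided fusion category and then invoke Lemma \ref{nonsimplepbetap} to identify it explicitly. The starting data, supplied by Proposition \ref{restrank} with $p=7$, is that $\rank(\C)=9$, the set of formal codegrees of $\C$ consists of the three Galois conjugates of $7\beta_7$ each appearing with multiplicity two together with the three Galois conjugates of $49/\beta_7^2$ each appearing with multiplicity one, and $\FPdim(\C)=49/\beta_7^2=\FPdim(\C(\ssl_2,5)_{\ad})^2$. The Frobenius--Perron dimension already factors as a perfect square of $\FPdim(\C(\ssl_2,5)_{\ad})$, which is the first quantitative hint that the desired Deligne decomposition should hold.

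I would next analyze the associated congruence representation $\rho_\C$ of $\SL(2,\Z)$. By Theorem \ref{transubcat} we have $\FQ(T_\C)=\FQ(\zeta_7)$, so $\rho_\C$ is a $9$-dimensional representation of $\SL(2,\Z_7)$. Since the irreducible representations of $\SL(2,\Z_7)$ have dimensions in $\{1,3,4,6,7,8\}$, in particular no $9$-dimensional irreducible exists, $\rho_\C$ is forced to be reducible. Corollary \ref{fixnobject} and Remark \ref{numGalcon} further pin down the $\hxt$-spectrum of $\rho_\C$ to consist of exactly three size-three Galois orbits of primitive seventh roots of unity. Combined with the explicit list of $\hxt$-spectra of irreducibles of $\SL(2,\Z_7)$ recorded in the appendix of \cite{PSYZ} and with \cite[Lemma 3.18]{BNRW2}, this forces $\rho_\C$ to decompose as a direct sum of three $3$-dimensional irreducibles of $\SL(2,\Z_7)$, each of which is realized by some Galois conjugate of the modular representation attached to $\C(\ssl_2,5)_{\ad}$.

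From this decomposition I would then extract a modular fusion subcategory $\D\subseteq\C$ braided equivalent to a Galois conjugate of $\C(\ssl_2,5)_{\ad}$. After an appropriate orthogonal change of basis within each of the three Galois orbits of simples, $\rho_\C(\hxs)=\frac{1}{\sqrt{\dim(\C)}}S$ becomes block-diagonal with three $3\times 3$ blocks, each coinciding up to a Galois twist with the $S$-matrix of $\C(\ssl_2,5)_{\ad}$. The three simples indexing one such block generate a sub-fusion-ring of $\Gr(\C)$ isomorphic to the rank-$3$ Fibonacci--type fusion ring via the Verlinde formula (\ref{Verlinde}), and this sub-ring is realized by a modular fusion subcategory $\D$. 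M\"uger's centralizer theorem \cite[Theorem 3.13]{DrGNO2} then yields $\C\cong\D\boxtimes\D_\C'$, and Lemma \ref{nonsimplepbetap} identifies $\C\cong\C(\ssl_2,5)_{\ad}^\sigma\boxtimes\C(\ssl_2,5)_{\ad}^{\sigma^2}$.

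The main obstacle will be converting the direct-sum decomposition of $\rho_\C$ into an honest sub-fusion-ring: an orthogonal change of basis on the representation space does not a priori respect the basis indexed by isomorphism classes of simples, so one must align the three blocks with the three Galois orbits carefully and verify that the resulting block entries produce non-negative integer fusion coefficients when fed into the Verlinde formula. The rigidity imposed by the prescribed formal codegrees, the explicit $\hxt$-spectrum, and the Galois symmetry $\sigma^2(t_X)=t_{\hat\sigma(X)}$ from \cite[Theorem II]{DongLNg} should leave only one consistent alignment, which then realizes $\D$ concretely as a modular fusion subcategory of $\C$.
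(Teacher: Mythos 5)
Your overall strategy (produce a proper modular fusion subcategory of $\C$ and then conclude with Lemma \ref{nonsimplepbetap}) matches the paper's, but the way you produce the subcategory contains a genuine error. The forced decomposition $\rho_\C\cong\rho_3\oplus\rho_3'\oplus\rho_3''$ into three $3$-dimensional irreducibles of $\SL(2,\Z_7)$ is not implied by your constraints, and it is in fact false for the very category the proposition says $\C$ must be: the modular representation of $\C(\ssl_2,5)^\sigma_\text{ad}\boxtimes\C(\ssl_2,5)^{\sigma^2}_\text{ad}$ is a tensor product of two $3$-dimensional irreducibles of $\SL(2,\Z_7)$, and since $\SL(2,\Z_7)$ has exactly two such irreducibles (dual to each other, both factoring through $\text{PSL}(2,7)$), this tensor product decomposes either as $\chi_3\otimes\chi_3\cong\text{Sym}^2\chi_3\oplus\Lambda^2\chi_3$, where $\text{Sym}^2\chi_3$ is the irreducible $6$-dimensional representation and $\Lambda^2\chi_3\cong\chi_3^*$, or as $\chi_3\otimes\chi_3^*\cong\rho_0\oplus\chi_8$; in neither case is it a sum of three $3$-dimensionals. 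There is also a gap upstream in the spectral argument: Corollary \ref{fixnobject} controls Galois orbits of simple objects, not the values $t_X$, so it does not exclude the eigenvalue $1$ from the $\hxt$-spectrum; and even granting an all-primitive spectrum, a $6\oplus3$ decomposition has exactly the same $\hxt$-spectrum as $3\oplus3\oplus3$, so \cite[Lemma 3.18]{BNRW2} and the spectra listed in \cite{PSYZ} cannot rule it out.

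Beyond this, the obstacle you flag at the end is not resolved and is essential: a direct-sum decomposition of $\rho_\C$ does not by itself yield a fusion subring, because the orthogonal change of basis that block-diagonalizes $\rho_\C(\hxs)$ need not be compatible with the distinguished basis of simple objects, and there is no general mechanism turning an irreducible summand into a modular subcategory. The paper avoids representation theory here entirely: starting from the formal codegrees of Proposition \ref{restrank} it writes all categorical and Frobenius--Perron dimensions explicitly up to signs $\epsilon_j$, fixes the signs using the Galois action on the rows of the $S$-matrix, and then uses the Verlinde formula (\ref{Verlinde}) together with \cite[Theorem 1.0.1]{CalMorSny} to show $X_2\otimes X_2=I\oplus X_3\oplus W_1\oplus W_2$ with $W_1,W_2$ simple of Frobenius--Perron dimension $d_X$, after which a short fusion-rule case analysis produces a subring isomorphic to $\Gr(\C(\ssl_2,5)_\text{ad})$, hence a proper fusion subcategory, and Lemma \ref{nonsimplepbetap} finishes. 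To repair your argument you would need an analysis of this kind in the distinguished basis; the representation-theoretic shortcut as stated does not work.
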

\begin{proof}
By Proposition \ref{restrank}, we know $\rank(\C)=9$ and $\FPdim(\C)=\frac{49}{\beta_7^2}$. Let $X_1=I$, and $\Q_{X_1}(\C)=\{X_1,X_2,X_3\}$, that is, $\sigma(\dim(\C))=\frac{\dim(\C)}{\dim(X_2)^2}$, $\sigma^2(\dim(\C))=\frac{\dim(\C)}{\dim(X_3)^2}$, $\sigma\in\Gal(\FQ(\zeta_7)/\FQ)$ such that $\sigma(\zeta_7)=\zeta_7^2$; $\Q_{X_4}(\C)=\{X_4,X_5,X_6\}$, where  $\dim(X_4)^2=1$,
\begin{align*}
\sigma(\dim(\C))=\frac{\dim(\C)}{\dim(X_5)^2},
\sigma^2(\dim(\C))=\frac{\dim(\C)}{\dim(X_6)^2};
\end{align*}
 and $\Q_{X_7}(\C)=\{X_7,X_8,X_9\}$, where $\dim(X_7)^2=\frac{\dim(\C)}{\FPdim(\C)}=\frac{\beta_7^3}{7}$,
 \begin{align*}
\frac{\dim(\C)}{\dim(X_8)^2}=\sigma\left(\frac{\dim(\C)}{\dim(X_7)^2}\right),
\frac{\dim(\C)}{\dim(X_9)^2}=\sigma^2\left(\frac{\dim(\C)}{\dim(X_7)^2}\right).
\end{align*}
Then we obtain that
\begin{align*}
&\epsilon_2\dim(X_2)=\epsilon_5\dim(X_5)=\frac{1}{d_Y},
\epsilon_3\dim(X_3)=\epsilon_6\dim(X_6)=\frac{1}{d_X},\dim(X_4)=\epsilon_4,\\
&\dim(X_7)=\frac{\epsilon_7}{d_Xd_Y},\dim(X_8)=\frac{\epsilon_8 d_Y}{d_X},
\dim(X_9)=\frac{\epsilon_9 d_X}{d_Y},~\epsilon_j\in\{\pm1\},2\leq j\leq 9.
\end{align*}

Since $\hat\sigma(X_j)=X_{j+1}$ and $\hat\sigma^2(X_j)=X_{j+2}$ where $j\in\{1,4,7\}$, for $1\leq k\leq 9$,
\begin{align*}
\sigma\left(\frac{S_{X_j,X_k}}{\dim(X_j)}\right)=\frac{S_{X_{j+1},X_k}}{\dim(X_{j+1})}, \quad \sigma^2\left(\frac{S_{X_j,X_k}}{\dim(X_j)}\right)=\frac{S_{X_{j+2},X_k}}{\dim(X_{j+2})}.
\end{align*}
Meanwhile $\FPdim(\C)=\frac{\dim(\C)}{\dim(X_7)^2}$, so $\FPdim(X_j)=\frac{S_{X_j,X_7}}{\dim(X_7)}$ for all $1\leq j\leq9$.
 Note that $\sigma(d_X)=\frac{1}{d_Y}$ and $\sigma(d_Y)=-\frac{d_X}{d_Y}$, hence $
\FPdim(X_2)=\frac{\dim(X_2)\sigma(\dim(X_7))}{\dim(X_7)}=-\epsilon_2d_Y^2$ and $\FPdim(X_3)=\frac{\dim(X_3)\sigma^2(\dim(X_7))}{\dim(X_7)}=-\epsilon_3d_X^2$,
we see  $\epsilon_2=\epsilon_3=-1$.

Notice that the $S$-matrix of $\C$ are presented by $\epsilon_j$ ($4\leq j\leq 9$), $d_X,d_Y$, $\FPdim(X_4)$, $\FPdim(X_7)$, $S_{X_4,X_4}$ and their Galois conjugates. In particular,
\begin{align*}
\FPdim(X_5)=-\epsilon_4\epsilon_5d_Y^2\sigma(\FPdim(X_4)),
\FPdim(X_6)=-\epsilon_4\epsilon_6d_X^2\sigma^2(\FPdim(X_4)),\\
\FPdim(X_8)=\epsilon_7\epsilon_8d_Y^2\sigma(\FPdim(X_7)),
\FPdim(X_8)=\epsilon_7\epsilon_9d_X^2\sigma^2(\FPdim(X_7)).
\end{align*}
The Verlinde formula (\ref{Verlinde}) implies $X_2\otimes X_2=I\oplus X_3\oplus A$, where $A$ is an object with $\FPdim(A)=2d_X$, so either $A$ is a simple object or $A$ is a direct sum of two simple objects by \cite[Corollary 3.3.16]{EGNO} and \cite[Theorem 1.0.1]{CalMorSny}, note that $A$  contains a simple object of Frobenius-Perron dimension $d_X$ or $d_Y$ as a direct summand in the latter case \cite[Theorem 1.0.1]{CalMorSny}. We claim that $A$ is a direct sum of two non-isomorphic simple objects of Frobenius-Perron dimension $d_X$. Indeed, if $A$ is simple or  $A=V\oplus W$ with $\FPdim(V)=d_Y$ and $\FPdim(W)=2d_X-d_Y$, then  a direct computation  shows  that $\FPdim(\C)>\frac{49}{\beta_7^2}$; if $A=2 M$ with $\FPdim(M)=d_X$, then $M\otimes X_2=2X_2\oplus N$, however $\FPdim(N)=d_Y(d_X-d_Y)<1$, it is a contradiction.

Since the  Frobenius-Perron dimensions of simple objects in the Galois orbits of $X_4$ and $X_7$ are distinct, it is easy to show they are $d_X,d_Y,d_Xd_Y$, respectively. Assume $A=W_1\oplus W_2$ and let $V_1,V_2$  be simple objects of Frobenius-Perron dimension $d_Y$. Then $V_1\otimes V_2$ and $W_1\otimes W_2$ must be simple, as $\C$ does not contain non-trivial invertible simple objects.
Assume $V_1\otimes V_1=I\oplus W_1$,  if $V_2\otimes V_2=I\oplus W_1$, then $2I\subseteq(V_1\otimes V_2)\otimes(V_1\otimes V_2)$, it is impossible, so $V_2\otimes V_2=I\oplus W_2$; note that if $V_1\otimes W_1=V_1\oplus W_2$, then $I\subseteq W_2\otimes (V_1\otimes W_1) \cong V_1\otimes (W_1\otimes W_2)$, it is impossible. Therefore, $V_1\otimes W_1=V_1\oplus W_1$ and $W_1\otimes W_1=I\oplus V_1\oplus W_1$, so  $\C$ contains a  fusion subcategory that is Grothendieck equivalent to $\C(\ssl_2,5)_\ad$. Consequently, $\C\cong\C(\ssl_2,5)^\sigma_\text{ad}\boxtimes\C(\ssl_2,5)^{\sigma^2}_\text{ad}$ as modular fusion category by Lemma \ref{nonsimplepbetap}.
\end{proof}
\begin{prop}\label{rank(p-1)}
Let $\C$ be a modular fusion category of global dimension $p\beta_p$, where $11\leq p\leq 23$. If $\rank(\C)=p-1$, then $p=11$ and $\C$ is braided tensor  equivalent to  a Galois conjugate of modular fusion category $\C( \mathfrak{so}_5,\frac{5}{2})_\ad$.
\end{prop}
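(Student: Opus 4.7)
My plan is to argue case-by-case on $p\in\{11,13,17,19,23\}$, using the fact that the Galois orbit structure of $\Q(\C)$ is already forced to be very rigid. Since $\rank(\C)=p-1$ and each Galois orbit contains exactly $(p-1)/2$ simple objects (by Corollary \ref{fixnobject} combined with Theorem \ref{transubcat}), there are exactly two orbits $\Q_I(\C)$ and $\Q_X(\C)$ for some non-unit simple object $X$. The full list of formal codegrees of $\C$ is therefore
\begin{align*}
\left\{\sigma(p\beta_p),\ \sigma(f)\ :\ \sigma\in\Gal(\FQ(\zeta_p)^+/\FQ)\right\},
\qquad f:=\frac{p\beta_p}{\dim(X)^2}.
\end{align*}
By Lemma \ref{algebraicunit}, $\dim(X)$ is an algebraic unit, so $f\in\FQ(\zeta_p)^+$, $N(f)=p^{(p-1)/2}$, and the minimal polynomial $g(y)=y^{(p-1)/2}+\sum_{j=1}^{(p-1)/2}(-1)^{j}a_{j}y^{(p-1)/2-j}$ of $f$ has positive integer coefficients with $a_{(p-1)/2}=p^{(p-1)/2}$.

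Next I would impose three successive numerical constraints on $g$. First, the class equation (\ref{classeqt}) reads
\begin{align*}
\sum_{\sigma\in\Gal(\FQ(\zeta_p)^+/\FQ)}\frac{1}{\sigma(p\beta_p)}+\frac{a_{(p-3)/2}}{a_{(p-1)/2}}=1,
\end{align*}
which determines $a_{(p-3)/2}$ exactly (the first sum equals $\tfrac{1}{p}\,\text{tr}(1/\beta_p)$, a rational explicitly computable from the minimal polynomial of $\beta_p$). Second, the $d$-number condition \cite[Lemma 2.7]{O1} forces $p^{\lceil j(p-3)/(p-1)\rceil}\mid a_j$ for $1\le j\le (p-1)/2$. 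Third, the lower bound $\sigma(f)>\sqrt{4/3}$ from \cite[Theorem 4.2.1]{O3} together with the Siegel trace bound \cite[Theorem III]{Siegel} on $\sum_\sigma 1/\sigma(f)$ give upper bounds on each $a_j$ via elementary symmetric polynomial inequalities (as already used in Lemma \ref{p=11,13} and Lemma \ref{p=17}). These three constraints cut the set of admissible $g$ down to a finite explicit list for each $p$.

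Having reduced to finitely many candidates, for each $p\in\{13,17,19,23\}$ I would run the cyclotomic test (checking whether $\Gal(\FQ(f)/\FQ)$ is abelian) on the surviving polynomials, analogous to the GAP computations in Proposition \ref{restrank}; the expectation is that no admissible $g$ survives, ruling out these primes entirely. For $p=11$, the unique surviving $g$ must coincide with the minimal polynomial of the non-unit formal codegree of (a Galois conjugate of) $\C(\sso_5,\tfrac{5}{2})_\ad$, since that category is a known modular fusion category of dimension $11\beta_{11}$ with rank $10$. This pins down $\dim(X)^2$ up to Galois conjugacy, and the Verlinde formula (\ref{Verlinde}) then fills in the entire $S$-matrix and Grothendieck ring from the orbit action; combining with Theorem \ref{transubcat} (so $\FQ(T_\C)=\FQ(\zeta_{11})$ and the normalized $t$-matrix is determined by the Galois symmetry) one obtains the full modular data of $\C(\sso_5,\tfrac{5}{2})_\ad^\tau$, and the Grothendieck classification of rank-$10$ transitive-by-transitive modular categories yields the claimed braided equivalence. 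The main obstacle I expect is the enumeration phase for $p=19,23$: here $\deg g\in\{9,11\}$ and the $d$-number divisibility conditions still leave a large combinatorial search, so the cyclotomic test must do real work, and verifying that no admissible $g$ passes it without missing a case is the delicate step.
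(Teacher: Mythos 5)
Your route through formal codegrees is genuinely different from the paper's, which instead analyzes the modular representation $\rho_\C$: it shows $\rho_\C$ cannot be $\rho_1\oplus(\oplus_m\rho_0)$ (by \cite[Proposition 3.22]{NgRWW}, which would force $p=5$), hence is irreducible and non-degenerate, and then invokes Eholzer's classification of non-degenerate representations of dimension at most $23$ \cite{Eholzer2} to get $p\in\{11,17,23\}$, eliminating $p=17,23$ because the only realizations ($\C(\mathfrak{g}_2,\frac{5}{3})$ and $\C(\mathfrak{e}_7,5)_\ad$) have the wrong global dimension or norm. Your plan has two genuine gaps. First, the exclusion of $p=13,17,19,23$ rests entirely on an enumeration plus $d$-number/cyclotomic tests whose outcome you only state as an ``expectation''; these are necessary conditions, not sufficient, so there is no a priori reason the search leaves no surviving polynomial $g$, and if even one candidate survives for, say, $p=17$ your argument stalls with no fallback (the paper's own use of such GAP tests is confined to pinning down orbit sizes and the rank, precisely the situations where the tests are known to terminate the case). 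You also tacitly assume $[\FQ(f):\FQ]=\frac{p-1}{2}$, i.e.\ that distinct simple objects in the second orbit have distinct squared dimensions, which needs justification.

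Second, and more seriously, the endgame for $p=11$ does not work as described: knowing the multiset of formal codegrees (equivalently the squared categorical dimensions) together with the Galois orbit structure does not determine the $S$-matrix, so the Verlinde formula (\ref{Verlinde}) cannot ``fill in'' the Grothendieck ring --- it requires the full $S$-matrix as input, which at that stage you do not have. Likewise there is no cited ``Grothendieck classification of rank-$10$ transitive-by-transitive modular categories'' to appeal to, and even a Grothendieck equivalence would not yield the claimed \emph{braided} equivalence. The paper gets the braided identification exactly from the representation-theoretic step you bypass: once $\rho_\C$ is known to be a $10$-dimensional non-degenerate level-$11$ representation, its only realization as a modular representation is by $\C(\sso_5,\frac{5}{2})_\ad$ and its Galois conjugates, which fixes the modular data. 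To repair your approach you would need either to actually carry out and verify the finite searches for all four excluded primes, and then supply an independent argument reconstructing the modular data of a rank-$10$ category of dimension $11\beta_{11}$ from its codegrees --- at which point you would essentially be redoing the $\SL(2,\Z)$ analysis anyway.
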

\begin{proof}
Let $\rho_\C$ be the associated modular representation of $\C$. We know that $\rho_\C$ can't be decomposed as direct sum of sub-representations with disjoint $\hxt$-spectrums \cite[Lemma 3,18]{BNRW2} and also that $\rho_\C$ can't be decomposed as direct sum of non-degenerate sub-representations of same type  \cite[Lemma 5.2.2]{PSYZ}. Since $\rank(\C)=p-1$ and $\ord(t)=p$, by comparing the dimensions of level $p$ irreducible representations of $\SL(2,\Z)$, we obtain either $\rho_\C$ is irreducible  or $\rho_\C=\rho_1\oplus (\oplus_{m}\rho_0)$, where $\rho_1$ is an irreducible representation of rank $\frac{p+1}{2}$ and $m=\frac{p-3}{2}$. However, \cite[Proposition 3.22]{NgRWW} states $m=1$ and then $p=5$, which is impossible. Hence, $\rho_\C$ is irreducible.

As $\rho_\C$ is non-degenerate and $\rank(\C)\leq 23$, \cite[Main Theorem 4]{Eholzer2} implies that $p=11,17,23$. Moreover, when $p=17,23$, the $(p-1)$-dimensional non-degenerate representations are realized as modular representations of modular fusion categories $\C( \mathfrak{g}_2,\frac{5}{3})$ and $\C(\mathfrak{e}_7,5)_\ad$, respectively. However, neither of the Galois conjugates of $\dim(\C( \mathfrak{g}_2,\frac{5}{3}))$ equal to $17\beta_{17}$,  and  the norm of  $\dim(\C(\mathfrak{e}_7,5)_\ad)$ is $23^{14}$, it is also impossible.
When $p=11$, the $10$-dimensional non-degenerate representations can only be realized as modular representation of  $\C( \mathfrak{so}_5,\frac{5}{2})_\ad$. Therefore, $\C$ is braided tensor equivalent to a Galois conjugate of $\C(\mathfrak{so}_5,\frac{5}{2})_\ad$.
\end{proof}

\begin{theo}\label{centrtrans}Let $\C$ be a modular fusion category of global dimension $p\beta_p$ where $p>3$  is a prime. If $\C_\text{pt}=\vvec$, then $\C$ is braided equivalent to one  of the following modular fusion categories
\begin{align*}
\left\{
         \begin{array}{ll}
           \C(\ssl_2,3)_\text{ad}\boxtimes\C(\ssl_2,3)^\sigma_\text{ad}\boxtimes\C(\ssl_2,3)^\sigma_\text{ad} , & \hbox{ if $p=5;$} \\
           \C(\ssl_2,5)^\tau_\text{ad}\boxtimes\C(\ssl_2,5)^{\tau^2}_\text{ad}, & \hbox{if $p=7;$} \\
           \C(\sso_5,\frac{5}{2})_\ad^\nu, & \hbox{if $p=11$.}
         \end{array}
       \right.
\end{align*}
where $\sigma\in\text{Gal}(\FQ(\zeta_5)/\FQ)$ such that $\sigma(\zeta_5)=\zeta_5^2$, $\tau\in\text{Gal}(\FQ(\zeta_7)/\FQ)$ such that $\tau(\zeta_7)=\zeta_7^2$ and $\nu\in\text{Gal}(\FQ(\zeta_{11})/\FQ)$ such that $\nu(\zeta_{11})=\zeta_{11}^9$.
\end{theo}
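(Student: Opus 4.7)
My plan is to proceed by cases on the prime $p$, invoking the previously established classification results rather than starting a fresh analysis.

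For $p=5$: Since $\beta_5=(5-\sqrt{5})/2$, I would first compute $N(\dim(\C)) = N(5\beta_5) = 25\cdot N_{\FQ(\sqrt{5})/\FQ}(\beta_5) = 25\cdot 5 = 5^3$ with $d_{\dim(\C)}=2$, so Proposition \ref{norm=5^3} applies. Of the two possibilities it offers, the alternative $\C\cong\C(\Z_5,\eta)\boxtimes\C(\ssl_2,3)_\ad^{\sigma_4}$ is ruled out by the hypothesis $\C_\pt=\vvec$, leaving $\C$ as a triple Deligne product of Galois twists of $\C(\ssl_2,3)_\ad$. Matching the target dimension $5\beta_5=(25-5\sqrt{5})/2$ against the four candidate triple products (computed via $\dim(\C(\ssl_2,3)_\ad)=(5+\sqrt{5})/2$ and its $\sigma$-conjugate $(5-\sqrt{5})/2$) will pick out $\C\cong\C(\ssl_2,3)_\ad\boxtimes\C(\ssl_2,3)_\ad^\sigma\boxtimes\C(\ssl_2,3)_\ad^\sigma$.

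For $p=7$: Proposition \ref{p=7} identifies \emph{any} modular fusion category of global dimension $7\beta_7$ as $\C(\ssl_2,5)_\ad^\tau\boxtimes\C(\ssl_2,5)_\ad^{\tau^2}$; the pointed subcategory is automatically trivial since neither Galois conjugate of $\C(\ssl_2,5)_\ad$ admits a non-trivial invertible simple object. So no further work is needed in this case.

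For $p\geq 11$: Proposition \ref{restrank} restricts the rank to $\{p-1,3(p-1)/2\}$. I would eliminate the larger value via Lemma \ref{p=11,13} for $p\in\{11,13\}$, via Lemma \ref{p=17} for $p=17$, and by the rank upper bound inside Proposition \ref{restrank} itself for $p\in\{19,23\}$, leaving $\rank(\C)=p-1$ in every surviving case. Applying Proposition \ref{rank(p-1)} then forces $p=11$ and $\C\cong\C(\sso_5,\frac{5}{2})_\ad^\nu$: for $p\in\{13,19\}$ there is no non-degenerate $(p-1)$-dimensional irreducible representation of $\SL(2,\Z)$ by Eholzer's classification, and for $p\in\{17,23\}$ the explicit dimensions of the candidate realizations $\C(\mathfrak{g}_2,\frac{5}{3})$ and $\C(\mathfrak{e}_7,5)_\ad$ do not match $p\beta_p$ (they are already ruled out inside the statement of Proposition \ref{rank(p-1)}).

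The principal obstacle will be organizational rather than analytic: the substantive work---Ostrik's lower bounds on global dimensions, the $d$-number and cyclotomic tests, Siegel-type trace inequalities, and Eholzer's classification of non-degenerate level-$p$ congruence representations---has already been assembled in the earlier results. What remains is the careful bookkeeping: one short dimension computation for $p=5$ to single out the correct twist combination among four candidates, a routine verification that each of the three conclusions has $\C_\pt=\vvec$, and confirmation for each $p\in\{13,17,19,23\}$ that no realization of global dimension $p\beta_p$ survives among the constrained Eholzer candidates at level $p$.
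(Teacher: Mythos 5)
Your plan reproduces the paper's own proof almost step for step: the paper also settles $p=5$ by citing Proposition \ref{norm=5^3} (your extra bookkeeping --- discarding $\C(\Z_5,\eta)\boxtimes\C(\ssl_2,3)^{\sigma_4}_\ad$ via $\C_\pt=\vvec$ and matching $\dim(\C)=\frac{25-5\sqrt{5}}{2}$ against the four possible triple products --- is exactly what that citation is implicitly carrying), settles $p=7$ by Proposition \ref{p=7}, and for larger primes combines Proposition \ref{restrank} with Lemma \ref{p=11,13}, Lemma \ref{p=17} and Proposition \ref{rank(p-1)}, just as the paper does.

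There is, however, one genuine omission: nothing in your argument excludes primes $p\geq 29$. You enumerate only $p\in\{11,13,17,19,23\}$, but none of the results you invoke justifies that list: Proposition \ref{restrank} as stated only restricts the rank, the two lemmas treat $p=11,13,17$, and Proposition \ref{rank(p-1)} is formulated (and its appeal to Eholzer's classification is only valid) for $11\leq p\leq 23$. The paper's first sentence supplies precisely this missing step: by Theorem \ref{transubcat} --- that is, by Ostrik's bound \cite[Proposition A.1.1]{O3} that every Galois conjugate of the global dimension of a category not conjugate to $\C(\ssl_2,3)_\ad$ exceeds $\frac{4\sqrt{3}}{5}$, combined with the monotonicity of $f(x)=\frac{4\pi\sin^2(x)}{x}$ from Lemma \ref{function} --- a modular fusion category with $\dim(\C)=p\beta_p=4p\sin^2(\pi/p)$ can only exist for $p\leq 23$, since $4p\sin^2(\pi/p)<\frac{4\sqrt{3}}{5}$ once $p\geq 29$. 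Note that the trivial estimate $\dim(\C)\geq 1$ does not suffice here ($4\cdot 29\,\sin^2(\pi/29)\approx 1.36>1$), so this step genuinely needs Ostrik's refined bound and cannot be waved away. Once you insert this bound at the start of your $p\geq 11$ case, your proof coincides with the paper's.
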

\begin{proof}We know $p\leq 23$  by Theorem \ref{transubcat}. When $p=5$, this is the conclusion of Proposition \ref{norm=5^3}, and Proposition \ref{restrank} shows $\rank(\C)=p-1$ or $\frac{3(p-1)}{2}$ if $p>5$. If $\rank(\C)=\frac{3(p-1)}{2}$, then there exists such a modular fusion category $\C$ only for $p=7$ by Lemma \ref{p=11,13}, Lemma \ref{p=17}  and Proposition \ref{p=7}; if $\rank(\C)=p-1$, we have $p=11$ by Proposition \ref{rank(p-1)}.
\end{proof}

Recall that modular fusion categories of global dimension $4$ and $9$ are either pointed, or braided equivalent to a Galois conjugate of Ising category $\C(\ssl_2,2)$ and  $\C(\sso_5,\frac{3}{2})_\ad$ \cite{Yu}, respectively. Hence, combining with the conclusions of \cite{Yu}, Proposition \ref{intdimfsb},   Theorem \ref{nintdimfsb} and Theorem \ref{centrtrans} together imply the following theorem:
\begin{theo}\label{nonsimplep^2}Let  $\C$ be a modular fusion category of global dimension $p^2$, where $p\geq5$ is a prime. If  $\C$ contains a non-trivial fusion subcategory, then either $\C$ is pointed, or $\C$ is braided tensor equivalent to a Galois conjugate of one of the following modular fusion categories
\begin{align*}
\left\{
         \begin{array}{ll}
         \left\{
           \begin{array}{ll}
             \C(\ssl_2,3)_\text{ad}\boxtimes \C(\ssl_2,3)^\sigma_\text{ad}\boxtimes\C(\Z_5,\eta), & \\
           \C(\ssl_2,3)_\text{ad}\boxtimes \C(\ssl_2,3)_\text{ad}\boxtimes\C(\ssl_2,3)^\sigma_\text{ad}\boxtimes \C(\ssl_2,3)^\sigma_\text{ad}, &
           \end{array}
         \right. & \hbox{if  $p=5;$}\\
\C(\ssl_2,5)_\text{ad}\boxtimes\C(\ssl_2,5)^\tau_\text{ad}\boxtimes\C(\ssl_2,5)^{\tau^2}_\text{ad}, & \hbox{if $p=7;$}\\
          \C(\ssl_2,9)_\ad\boxtimes\C(\mathfrak{so}_5,\frac{5}{2})_\ad^\nu & \hbox{if $p=11$.}
         \end{array}
       \right.
\end{align*}
where $\sigma\in\Gal(\FQ(\zeta_5)/\FQ)$ such that $\sigma(\zeta_5)=\zeta_5^2$,  $\tau\in\Gal(\FQ(\zeta_7)/\FQ)$ such that $\tau(\zeta_7)=\zeta_7^2$ and $\nu\in\Gal(\FQ(\zeta_{11})/\FQ)$ such that $\nu(\zeta_{11})=\zeta_{11}^9$.
\end{theo}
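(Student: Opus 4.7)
The proof plan is to reduce Theorem \ref{nonsimplep^2} to three inputs already proved in the paper by splitting on a dichotomy about integer-dimensional subcategories. Concretely, I would distinguish two mutually exhaustive cases for a non-simple modular fusion category $\C$ of global dimension $p^2$ with $p\ge 5$: \emph{Case A}, where $\C$ contains a non-trivial fusion subcategory $\D$ with $\dim(\D)\in\Z$, and \emph{Case B}, where no such subcategory exists. In Case A one invokes Proposition \ref{intdimfsb} directly. Its outcomes are pointed, the Ising case $\C(\ssl_2,2)$, or the two explicit Deligne products built from $\C(\ssl_2,3)_\ad$, $\C(\ssl_2,3)_\ad^\sigma$ and $\C(\Z_5,\eta)$. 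The Ising option is incompatible with $p\ge5$ (its dimension is $4$), so Case A already yields the pointed conclusion together with the two $p=5$ items listed in the theorem.

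For Case B, Theorem \ref{nintdimfsb} produces a modular fusion subcategory $\E\subseteq\C$ which is braided equivalent to a Galois conjugate $\C(\ssl_2,p-2)_\ad^\rho$. Since $\E$ is modular, its M\"uger centralizer $\E_\C'$ is also modular and \cite[Theorem 3.13]{DrGNO2} yields a braided equivalence $\C\cong \E\boxtimes \E_\C'$. Up to replacing $\C$ by a Galois conjugate (which does not affect the statement) I can arrange $\dim(\E)=p/(4\sin^2(\pi/p))$, and then
\begin{equation*}
\dim(\E_\C')=\frac{p^2}{\dim(\E)}=4p\sin^2(\pi/p)=p\beta_p.
\end{equation*}
The hypothesis of Case B forces $(\E_\C')_\pt=\vvec$, since any non-trivial pointed subcategory of $\E_\C'$ would be a non-trivial integer-dimensional fusion subcategory of $\C$.

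At this point Theorem \ref{centrtrans} applies to $\E_\C'$: it is non-pointed, modular, and has global dimension $p\beta_p$. The theorem pins $p$ to one of $\{5,7,11\}$ and identifies $\E_\C'$ up to Galois conjugation as $\C(\ssl_2,3)_\ad\boxtimes\C(\ssl_2,3)_\ad^\sigma\boxtimes\C(\ssl_2,3)_\ad^\sigma$, $\C(\ssl_2,5)_\ad^\tau\boxtimes\C(\ssl_2,5)_\ad^{\tau^2}$, or $\C(\sso_5,\tfrac52)_\ad^\nu$, respectively. Tensoring each with the extracted $\C(\ssl_2,p-2)_\ad$-factor (which for $p=5,7,11$ is $\C(\ssl_2,3)_\ad$, $\C(\ssl_2,5)_\ad$, $\C(\ssl_2,9)_\ad$) reproduces the three listed Deligne products, finishing Case B.

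The proof is therefore an assembly of already-proved ingredients rather than a new computation; the only real point requiring care is the bookkeeping of Galois conjugates and the verification that \textquotedblleft up to Galois conjugate of $\C$\textquotedblright{} is enough to reduce to the normalized dimension $p\beta_p$ for $\E_\C'$. The main obstacle, if one were proving these inputs from scratch rather than citing them, would be Theorem \ref{centrtrans} and in particular the $p=11$ sub-case, whose treatment requires the classification of $10$-dimensional non-degenerate level $11$ congruence representations of $\SL(2,\Z)$ and the exclusion of rank $3(p-1)/2$ via the $d$-number and cyclotomic tests; granted that input, the present theorem follows in two paragraphs as sketched above.
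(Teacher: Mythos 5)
Your proposal is correct and is essentially the paper's own argument: the paper proves this theorem precisely by combining Proposition \ref{intdimfsb} (the integer-dimensional-subcategory case), Theorem \ref{nintdimfsb} (extracting a Galois conjugate of $\C(\ssl_2,p-2)_\ad$ otherwise), and Theorem \ref{centrtrans} applied to the centralizer of dimension $p\beta_p$, exactly as you do. Your added bookkeeping (excluding Ising for $p\geq5$, normalizing by a Galois twist, and noting that the centralizer has trivial pointed part) is consistent with the paper's implicit reasoning.
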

This completes the classification of non-simple modular fusion categories of global dimension $p^2$. In addition, it is worth to note that $\C(\sso_5,\frac{3}{2})_\ad$ and  its Galois conjugates are simple modular fusion categories of global dimension $9$.
\begin{ques}\label{quessimpp^2}Let  $p>3$ be a prime. Is there a simple  modular fusion category  $\C$ of global dimension $p^2$?
\end{ques}
Moreover, let $\D$ be a spherical fusion category of global dimension $p$,  then its Drinfeld center $\Y(\D)$ is a modular fusion category of global dimension $p^2$. Therefore, a negative answer to Question \ref{quessimpp^2} will also result in  a complete  classification of spherical fusion categories of prime global dimensions.

\section*{Acknowledgements}The author  is  supported by the National Natural Science Foundation of China (no.12101541), the Natural Science Foundation of Jiangsu Province (no.BK20210785), and the Natural Science Foundation of Jiangsu Higher Institutions of China (no.21KJB110006). The author is grateful to Y. Wang for conversations on the  representations of the modular group $\SL(2,\Z)$.

\bigskip
\author{Zhiqiang Yu\\ \thanks{Email:\,zhiqyumath@yzu.edu.cn}
\\{\small School of Mathematical Sciences,  Yangzhou University, Yangzhou 225002, China}}
\end{document}